\def\switchlinenumbers{\@ifstar
    {\let\makeLineNumberOdd\makeLineNumberRight
     \let\makeLineNumberEven\makeLineNumberLeft}%
    {\let\makeLineNumberOdd\makeLineNumberLeft
     \let\makeLineNumberEven\makeLineNumberRight}%
    }
\def\setmakelinenumbers#1{\@ifstar
  {\let\makeLineNumberRunning#1%
   \let\makeLineNumberOdd#1%
   \let\makeLineNumberEven#1}%
  {\ifx\c@linenumber\c@runninglinenumber
      \let\makeLineNumberRunning#1%
   \else
      \let\makeLineNumberOdd#1%
      \let\makeLineNumberEven#1%
   \fi}%
  }
\nonstopmode \numberwithin{equation}{section}
\newtheorem*{theorem*}{Theorem}
\newtheorem{thm}{Theorem}[section]
\newtheorem{cor}[equation]{Corollary}
\newtheorem{lem}{Lemma}[section]
\theoremstyle{definition}
\newtheorem{defn}{Definition}[section]
\newtheorem{example}{Example}[section]
\newtheorem{prob}[equation]{Problem}
\newtheorem{rem}{Remark}[section]
\newcounter{minutes}\setcounter{minutes}{\time}
\newcounter{hours}\setcounter{hours}{\time}
\newcounter {own}
\def\theown {\thesection       .\arabic{own}}
\newenvironment{pf}[1][]{%
 \vskip 3mm
 \noindent
 \ifthenelse{\equal{#1}{}}%
  {{\slshape Proof. }}%
  {{\slshape #1.} }%
 }%
{\qed\bigskip}
\newcounter{alphabet}
\def\be{\begin{equation}}
\def\ee{\end{equation}}
\newcommand{\bee}{\begin{enumerate}}
\newcommand{\eee}{\end{enumerate}}
\newcommand{\blem}{\begin{lem}}
\newcommand{\elem}{\end{lem}}
\newcommand{\bthm}{\begin{thm}}
\newcommand{\ethm}{\end{thm}}
\newcommand{\bcor}{\begin{cor}}
\newcommand{\ecor}{\end{cor}}
\newcommand{\beg}{\begin{examp}}
\newcommand{\eeg}{\end{examp}}
\newcommand{\begs}{\begin{examples}}
\newcommand{\eegs}{\end{examples}}
\newcommand{\bdefe}{\begin{defin}}
\newcommand{\edefe}{\end{defin}}
\newcommand{\bprob}{\begin{prob}}
\newcommand{\eprob}{\end{prob}}
\newcommand{\bei}{\begin{itemize}}
\newcommand{\eei}{\end{itemize}}
\newcommand{\norm}[1]{\left\lVert#1\right\rVert}
\newcommand{\abs}[1]{\left\lvert#1\right\rvert}
\newcommand{\innpdct}[1]{\left\langle#1\right\rangle}
\renewcommand\p@enumi{\thetheorem\,}
\begin{document}

\title{Composition-Differentiation Operator On Hardy-Hilbert Space of Dirichlet Series }

\author{Vasudevarao Allu}
\address{Vasudevarao Allu,
Department  of Mathematics, School of Basic Sciences,
Indian Institute of Technology Bhubaneswar,
Bhubaneswar-752050, Odisha, India.}
\email{avrao@iitbbs.ac.in}

\author{Dipon Kumar Mondal}
\address{Dipon Kumar Mondal,
Department  of Mathematics, School of Basic Sciences,
Indian Institute of Technology Bhubaneswar,
Bhubaneswar-752050, Odisha, India.}
\email{diponkumarmondal@gmail.com}

\subjclass[{AMS} Subject Classification:]{Primary 47B33, 47B38, 30B50; Secondary , 30H10}
\keywords{Composition-differentiation operator, Hardy-Hilbert space of Dirichlet series, Approximation numbers, Mean counting function. }

\def\thefootnote{}
\footnotetext{ {\tiny File:~\jobname.tex,
printed: \number\year-\number\month-\number\day,
          \thehours.\ifnum\theminutes<10{0}\fi\theminutes }
} \makeatletter\def\thefootnote{\@arabic\c@footnote}\makeatother

\begin{abstract}
	In this paper, we establish a compactness criterion for the composition-differentiation operator \( D_\Phi \) in terms of a decay condition of the mean counting function at the boundary of a half-plane. We provide a sufficient condition of the boundedness of the operator \( D_\Phi \) for the symbol \( \Phi \) with zero characteristic. Additionally, we investigate an estimate for the norm of \( D_\Phi \) in the Hardy-Hilbert space of Dirichlet series, specifically with the symbol \( \Phi(s) = c_1 + c_2 2^{-s} \). We also derive an estimate for the approximation numbers of the operator \( D_\Phi \). Moreover, we determine an explicit conditions under which the operator \( D_\Phi \) is self-adjoint and normal. Finally, we describe the spectrum of \( D_\Phi \) when the symbol \( \Phi(s) = c_1 + c_2 2^{-s} \).
	
\end{abstract}

\maketitle
\pagestyle{myheadings}
\markboth{Vasudevarao Allu, Dipon Kumar Mondal}{Composition-Differentiation Operator on Hardy-Hilbert Space of Dirichlet Series}

\section{Introduction}
  We denote the half-plane $\mathbb{C}_\theta$ for a real number $\theta$ as follows: $\mathbb{C}_\theta = \{s\in\mathbb{C} : \Re(s)>\theta\}$, where $\Re(s)$ is the real part of the complex number $s$. The symbol \(\mathbb{D}\) refers to the open unit disk in the complex plane \(\mathbb{C}\). Let $H(\Omega)$ denote the space of all holomorphic functions defined on a domain $\Omega\subseteq\mathbb{C}$. A Dirichlet series is a series of functions of the form $f(s) = \sum_{n=1}^{\infty}a_nn^{-s}$, where \(s = \sigma + it\) is a complex number with real part \(\sigma\). One of the most significant and well-studied examples is the Riemann zeta function, which can be written as a Dirichlet series $\zeta(s) = \sum_{n=1}^{\infty}n^{-s}$, which is valid for $\Re(s)>1$. The Hardy-Hilbert space $\mathcal{H}^2$ of Dirichlet series was first studied by G\"{o}rdon and Hedenmalm \cite{Gordon-Michigan-1999} in their seminal paper. This space serves as a Dirichlet series analogue of the classical Hardy-Hilbert space defined over the unit disk $\mathbb{D}$. The space $\mathcal{H}^2$ contains the $\ell^2$-inner product of the coefficients, where $\ell^2$ space represents the Hilbert space of square summable complex sequences. For holomorphic maps $f(s)= \sum_{n=1}^{\infty}a_n n^{-s}$ and $g(s)= \sum_{n=1}^{\infty}b_n n^{-s}$ in $\mathcal{H}^2$, their inner product is defined as $\innpdct{f, g}= \sum_{n=1}^{\infty}a_n\overline{b_n}$. The Hardy-Hilbert space of Dirichlet series is defined as
  \begin{equation}
  	\mathcal{H}^2 := \left\{f(s)= \sum_{n=1}^{\infty}a_n n^{-s} :\norm{f}_{\mathcal{H}^2}^2 = \sum_{n=1}^{\infty}|a_n|^2<\infty\right\}.
  \end{equation}
 Therefore, $\mathcal{H}^2$ is the Hilbert space of all Dirichlet series $f(s)= \sum_{n=1}^{\infty}a_n n^{-s}$ defined in the half-plane $\mathbb{C}_\frac{1}{2}$. The series $\sum_{n=1}^{\infty}a_n n^{-s}$ converges absolutely in this half-plane due to the Cauchy-Schwarz inequality. G\"{o}rdon and Hedenmalm \cite{Gordon-Michigan-1999} have shown that functions in the space $\mathcal{H}^2$ typically do not extend holomorphically to any domain larger than $\mathbb{C}_{\frac{1}{2}}$.  For fundamental properties of Dirichlet series, we refer to \cite[Chapter 11]{Apostol-Book-1976}. The classical Hardy-Hilbert space on the unit disk $\mathbb{D}$ is defined as
 \begin{equation*}
 	\mathbb{H}^2(\mathbb{D}) := \left\{f(z)= \sum_{n=1}^{\infty}a_nz^n\,\,\mbox{is holomorphic in}\,\,\mathbb{D} :\norm{f}_{\mathbb{H}^2(\mathbb{D})}^2 = \sum_{n=1}^{\infty}|a_n|^2<\infty\right\},
 \end{equation*}
 where the inner product between two functions $f$ and $g$ in $\mathbb{H}^2(\mathbb{D})$ is defined as
 \begin{equation*}
 	\innpdct{f,g} := \lim_{r\to 1^-}\frac{1}{2\pi}\int_{0}^{2\pi} f(re^{it})\overline{g(re^{it})} dt.
 \end{equation*} 
 The above norm definition establishes an isometric isomorphism between the spaces $\mathbb{H}^2(\mathbb{D})$ and $\ell^2$. Additionally, the space $\mathbb{H}^2(\mathbb{D})$ can also be identified with the space $L^2(\mathbb{T})$, where $\mathbb{T} = \{z\in\mathbb{C}: |z|=1\}$. Under this correspondence, we can write
 \begin{equation*}
 	\mathbb{H}^2(\mathbb{D}) := \left\{f\in H(\mathbb{D}): \norm{f}_{\mathbb{H}^2(\mathbb{D})}^2 =  \sup_{0\le r<1}\int_{\mathbb{T}}|f(r\xi)|^2 dm(\xi) <\infty\right\}, 
 \end{equation*}
 where $m$ is the normalized Lebesgue measure on $\mathbb{T}$. The norm of any function $f\in\mathbb{H}^2(\mathbb{D})$ is defined as follows:
 \begin{equation*}
 	\mathbb{H}^2(\mathbb{D}) := \left\{f\in H(\mathbb{D}): \norm{f}_{\mathbb{H}^2(\mathbb{D})}^2 =  \sup_{0\le r<1}\int_{\mathbb{T}}|f(r\xi)|^2 dm(\xi) = \lim_{r\to 1^-}\frac{1}{2\pi}\int_{-\pi}^{\pi}|f(re^{it})|^2dt <\infty\right\}.
 \end{equation*}
From Fatou's radial limit theorem \cite{Shapiro-Book-1992}, it is evident that how $\mathbb{H}^2(\mathbb{D})$ and a closed subspace of $L^2(\mathbb{T})$ are related. In view of Bohr's correspondence, $\mathcal{H}^2$ is the image of the Hardy space of infinite polycircle $\mathbb{H}^2(\mathbb{T}^{\infty})$, where $\mathbb{T}^\infty= \{z=(z_1,z_2,\cdots): |z_i|=1,\,\,\mbox{for}\,\, i\in\mathbb{N}\}$ be the infinite cartesian product of unit circle $\mathbb{T} = \partial\mathbb{D}$ equipped with the normalized Haar measure $m_\infty$. Here $\mathbb{T}^{\infty}$ is a compact abelian group with dual group $\mathbb{Z}^{\infty}$, which consists of all eventually zero sequences $\alpha= (\alpha_j)_{j\ge 1}$ of integers. Let $\mathbb{N}^{\infty}$ denote the subset of $\mathbb{Z}^{\infty}$ with $\alpha_j\in\mathbb{N}$, for all $j\ge 1$. We note that a function $f\in\mathbb{H}^2(\mathbb{T}^{\infty})$ if, and only if, $f\in {L}^2(\mathbb{T}^{\infty})$ with all it's negative Fourier coefficients $\widehat{f}(\alpha)$'s are zero, that is,
 \begin{equation*}
 	\widehat{f}(\alpha) = \int_{\mathbb{T}^\infty}f(z)\overline{z}^{\alpha}dm_\infty(z)= 0,
 \end{equation*}
 where $\alpha\in\mathbb{Z}^{\infty}\setminus\mathbb{N}^{\infty}$, $z\in\mathbb{T}^{\infty}$ and here $z^\alpha=\prod_{j\ge 1}z_j^{\alpha_j}$. Every function in \( L^2(\mathbb{T}^{\infty}) \) can be represented through its Fourier series as follows:
 \begin{equation*}
 	f = \sum_{\alpha\ge 0}\hat{f}(\alpha)e_\alpha,\,\,\,\mbox{with}\,\,\sum_{\alpha\ge 0}|\hat{f}(\alpha)|^2< \infty,
 \end{equation*}
 where $e_\alpha$ is the canonical basis of ${L}^2(\mathbb{T}^{\infty})$. We denote $\mathbb{D}^\infty= \{z=(z_1,z_2,\cdots):|z_i|<1,\,\,\mbox{for}\,\, i\in\mathbb{N}\}$ as the infinite polydisk. Furthermore, the space $\mathcal{H}^2$ is the completion of $\mathcal{P}$, the set of all Dirichlet polynomials $P(s)= \sum_{n=1}^{N}a_nn^{-s}$ with the norm defined as 
 \begin{equation*}
 	\norm{P}^2_{\mathcal{H}^2}= \lim_{T\to\infty}\frac{1}{2T}\int_{-T}^{T}|P(it)|^2 dt.
 \end{equation*} 
 Moreover, as stated in \cite[Theorem 2]{Bayart-monatsh-2002}, the map $\mathfrak{B}: \mathcal{P}\to\mathbb{H}^2(\mathbb{T}^{\infty})$ extends to a multiplicative isometric isomorphism from $\mathcal{H}^2$ to $\mathbb{H}^2(\mathbb{T}^{\infty})$. Furthermore, $\mathbb{H}^2(\mathbb{T}^{\infty})$ is the completion of $\mathfrak{B}(\mathcal{P})$ with the norm $\norm{.}_{\mathbb{H}^2(\mathbb{T}^{\infty})}$. Bayart \cite{Bayart-monatsh-2002} has studied explicitly the space $\mathbb{H}^p(\mathbb{T}^{\infty})$ in the non-Hilbertian context. It is more natural to consider \(\mathbb{H}^2(\mathbb{D}^{\infty})\) as a function space defined on the distinguished boundary \(\mathbb{T}^\infty\). Based on the Dirichlet, Fourier and power series coefficients respectively, Hedenmalm, Lindqvist, and Seip \cite[Lemma 2.3]{Hedenmalm-Duke-1997} have stated that there are three different isometric, linear bijections among the spaces $\mathcal{H}^2$, $\mathbb{H}^2(\mathbb{T}^\infty)$, and $\mathbb{H}^2(\mathbb{D}^\infty)$. Let $\mathfrak{C}$ be an isometric isomorphism from $\mathbb{H}^2(\mathbb{T}^\infty)$ to $\mathbb{H}^2(\mathbb{D}^\infty)$ and $\mathfrak{L}$ be a map from $\mathcal{H}^2$ to $\mathbb{H}^2(\mathbb{D}^\infty)$ so that $\mathfrak{L}= \mathfrak{C}\mathfrak{B}$, that is, we have the following commutative isometric diagram: \\
 \begin{center}
 	\[
 	\begin{tikzcd}
 		{} &  \mathbb{H}^2(\mathbb{T}^\infty) \arrow{dr}{\mathfrak{C}} \\
 		\mathcal{H}^2 \arrow{ur}{\mathfrak{B}} \arrow{rr}{\mathfrak{L}} && \mathbb{H}^2(\mathbb{D}^\infty).
 	\end{tikzcd}
 	\]
\end{center}
 
    For a given holomorphic self-map $\Phi$ of $\mathbb{C}_\frac{1}{2}$, the composition operator $C_\Phi$ on $\mathcal{H}^2$ is defined as $C_\Phi(f)= f\circ\Phi$. This indicates that $C_\Phi$ is a map from $\mathcal{H}^2$ to $H(\mathbb{C}_\frac{1}{2})$. By closed graph theorem, $C_\Phi$ is a bounded linear operator on $\mathcal{H}^2$. In $1999$, G\"{o}rdon and Hedenmalm \cite{Gordon-Michigan-1999} studied for which holomorphic map $\Phi$ (or, we shall call it here as a `symbol' $\Phi$), the operator $C_\Phi$ maps $\mathcal{H}^2$ to itself. Additionally, it is relevant to analyze operators for holomorphic functions in the half-plane that involve differentiation. To define the operator on $\mathcal{H}^2$, we shall introduce the composition-differentiation operator $D_\Phi$ for a holomorphic self-map $\Phi$ on $\mathbb{C}_\frac{1}{2}$ as follows: $D_\Phi(f)= f'\circ\Phi$. The operator $D_\Phi: \mathcal{H}^2\to\mathcal{H}^2$ is bounded by the closed graph theorem, much like $C_\Phi$.\vspace{2mm}

  The differentiation operator is well-known to be unbounded on the classical Hardy-Hilbert space \(\mathbb{H}^2(\mathbb{D})\). A natural question that arises is: for which holomorphic maps \(\Phi: \mathbb{D} \to \mathbb{D}\) is the operator \(D_\Phi\) bounded on \(\mathbb{H}^2(\mathbb{D})\)? Initially, Ohno \cite{Ohno-BAMS-2006} proposed the composition-differentiation operator \(D_\Phi\) defined as \(C_\Phi D\), where \(D\) is the differentiation operator. In that work, Ohno \cite{Ohno-BAMS-2006} addressed the concepts of compactness and boundedness in the classical case of \(\mathbb{H}^2(\mathbb{D})\). Subsequently, Hibschweiler and Portnoy \cite{Hibschweiler-RMJM-2005} explored the same properties for the operator \(DC_\Phi\) in addition to \(C_\Phi D\). In 2020, Fatehi and Hammond \cite{fatehi-PAMS-2020} studied the composition-differentiation operator for the classical Hardy-Hilbert space \(H^2(\mathbb{D})\) in detail.
   \vspace{2mm}\\ \vspace{0.5mm}
  \hspace{1mm} In this paper, we are interested in identifying which symbol \(\Phi\) on the half-plane \(\mathbb{C}_{\frac{1}{2}}\) induces a bounded operator \(D_\Phi\) on the space \(\mathcal{H}^2\). The properties of the space \(\mathcal{H}^2\) on the half-plane \(\mathbb{C}_{\frac{1}{2}}\) may differ from those of holomorphic functions in the classical Hardy-Hilbert space \(\mathbb{H}^2(\mathbb{D})\). Notably, not all functions defined on the half-plane can be expressed as convergent Dirichlet series of the form \(\sum_{n=1}^{\infty} a_n n^{-s}\). This limitation arises because \(\mathcal{H}^{\infty}\) is a proper subset of \(\mathbb{H}^\infty(\mathbb{C}_0)\). Here, \(\mathbb{H}^\infty(\mathbb{C}_0)\) represents all bounded holomorphic functions in \(\mathbb{C}_0\) with the norm defined as 
  \[
  \norm{f}_\infty := \sup_{s\in\mathbb{C}_0} |f(s)|.
  \]
  The space \(\mathcal{H}^{\infty}\) consists of functions in \(\mathbb{H}^\infty(\mathbb{C}_0)\) that can be represented as a convergent Dirichlet series. Thus, we have
  \[
  \mathcal{H}^{\infty} := \mathbb{H}^\infty(\mathbb{C}_0) \cap \mathcal{D},
  \]
  where $\mathcal{D}$ represents the functions in a half-plane that can be represented by a convergent Dirichlet series.
In $2003$, Bayart \cite{Bayart-IJ-2003} made a groundbreaking contribution by proving a necessary and sufficient condition for the compactness of composition operators on \(\mathcal{H}^2\) for a general symbol \(\Phi\) stated in \eqref{Dipon-vasu-p1-eqn-046}. In $2023$, Bayart \cite{Bayart-BLMS-2023} improved the sufficient condition for compactness of the composition operator. Furthermore, Queff\'{e}lec and Seip \cite{Queffelec-JFA-2015} have provided valuable insights by estimating the decay rates of approximation numbers for the composition operator related to a symbol that generates a compact composition operator on \(\mathcal{H}^2\).
  In $1999$, G\"{o}rdon and Hedenmalm \cite{Gordon-Michigan-1999} demonstrated which holomorphic function $\Phi$ in some half-plane $\mathbb{C}_\theta$, for a real number $\theta$, maps it into $\mathbb{C}_\frac{1}{2}$ that generates a composition operator $C_\Phi$ on $\mathcal{D}$. Finding such a  suitable holomorphic function $\Phi$ that defines a composition-differentiation operator $D_\Phi$ in $\mathcal{D}$ is one of our key steps. G\"{o}rdon and Hedenmalm \cite{Gordon-Michigan-1999} also determined which symbols $\Phi$ generates a bounded composition operator on $\mathcal{H}^2$. They proved that the symbol $\Phi$ defined on the half-plane $\mathbb{C}_{\frac{1}{2}}$ generates a composition operator if, and only if,
  \begin{equation}\label{Dipon-vasu-p1-eqn-046}
  	\Phi(s) = c_0s+ \phi(s),
  \end{equation} where $c_0$ is a non-negative integer and $\phi(s)= \sum_{n=1}^{\infty}c_nn^{-s}$ converges uniformly in each half-plane $\mathbb{C}_\epsilon$, for every $\epsilon>0$, where $\phi$ satisfies the following properties:
	\begin{enumerate}
			\item[\textbf{(i)}] If $c_0=0$, then $\Phi(\mathbb{C}_0)= \phi(\mathbb{C}_0)\subset\mathbb{C}_\frac{1}{2}$.
			\item[\textbf{(ii)}] If $c_0\ge 1$, then either $\phi= 0$ or, simply $\Phi(\mathbb{C}_0)\subset\mathbb{C}_0$.
	\end{enumerate}
	The non-negative integer $c_0$ is called the characteristic of $\Phi$ or, simply \textit{char $\Phi$}. Also we denote $\mathcal{G}_0$ and $\mathcal{G}_{\ge 1}$ for the subclasses \textbf{(i)} and \textbf{(ii)} respectively.\vspace{2mm}\\ \vspace{0.5mm}
	\hspace{2mm} In $2021$, Brevig and Perfekt obtained a new compactness criterion \cite[Theorem 1.4]{Brevig-AdvMath-2021} on $\mathcal{H}^2$ for the composition operator $C_\Phi$ in terms of  the mean counting function \eqref{Dipon-vasu-p1-eqn-038} of the symbol $\Phi\in\mathcal{G}_0$. In 2022, Brevig and Perfekt \cite{BrevigPerfekt-JFA-2022} introduced another compactness criterion on $\mathcal{H}^2$ in terms of the Nevanlinna counting function $N_\Phi(w)$ for symbols $\Phi\in\mathcal{G}_{\ge 1}$, defined as
		\begin{equation*}
			N_\Phi(w) = \sum_{s\in\Phi^{-1}(w)}\Re(s),
		\end{equation*}
		for every $w\in\mathbb{C}_0$. 
		\begin{defn}
			A Dirichlet series $f(s) = \sum_{n=1}^{\infty}a_nn^{-s}$ is \textit{supported} on the set of primes $\mathbb{P}$ if its coefficients are nonzero only when the index $n$ is a prime number.
		\end{defn}
		In 2022, Brevig and Perfekt \cite{BrevigPerfekt-JFA-2022} proved that, if the symbol $\Phi\in\mathcal{G}_{\ge 1}$ and the function $\phi$ is supported on a single prime  $\mathbb{P} = \{p\}$, then the operator $C_\Phi$ is compact on $\mathcal{H}^2$ if, and only if, 
		\begin{equation}
			\lim_{\Re(w)\to 0^+}\frac{N_\Phi(w)}{\Re(w)} = 0.
		\end{equation}
		 In $2023$, Kouroupis and Perfekt \cite{Athanos-JLMS-2023} established the compactness condition on the Bergman space of Dirichlet series. By using a specific symbol $\Phi(s)= c_1+ c_2 2^{-s}$ with $c_2\ne 0$ and $\Re(c_1)\ge|c_2|+ 1/2$, Muthukumar, \textit{et al.} \cite{muthukumar-IEOT-2018} have obtained the following estimate for the upper and lower bounds on the norm of a composition operator $C_\Phi$ on $\mathcal{H}^2$ :
	 \begin{equation*}
	 	\zeta(2\Re(c_1))\le\norm{C_\phi}^2\le\zeta(2\Re(c_1)- r|c_2|),
	 \end{equation*}
	 where $r\le 1$ is the smallest positive root of the quadratic polynomial
	 \begin{equation*}
	 	P(r)= |c_2|r^2+ (1-2\Re c_1)r +|c_2|,
	 \end{equation*}
	 and $\zeta(s)$ denotes the Riemann zeta function with $\Re(s)>1$.\vspace{2mm}\\ \vspace{0.5mm}
	 \hspace{1mm} In $2020$, Brevig and Perfekt \cite{Brevig-JFA-2020} utilized affine symbols to improve upper bounds of the norm of the composition operator. In this context, we shall provide bounds for the operator $D_\Phi$ with the same special symbol $\Phi(s)= c_1+ c_2 2^{-s}$, where $\Re(c_1)>|c_2|+1/2$ and $c_2\ne 0$.  G\"{o}rdon and Hedenmalm \cite{Gordon-Michigan-1999} investigated the mapping properties of the symbol $\Phi$ that yields a bounded composition operator $C_\Phi$ on $\mathcal{H}^2$. Inspired by the work of  G\"{o}rdon and Hedenmalm \cite{Gordon-Michigan-1999}, we have examined the mapping properties of the symbol with zero characteristic for the composition-differentiation operator $D_\Phi$ on $\mathcal{H}^2$. In addition, we shall present bounds for the $(n+1)$-th approximation number of the  composition-differentiation operator $D_\Phi$, which is denoted as $a_{n+1}(D_\Phi)$. For further details on approximation numbers, we refer to \cite{Queffelec-TRM-2020} and \cite{Carl-Entropy-1990}. In $2006$, Ohno \cite{Ohno-BAMS-2006} provided conditions for boundedness and compactness of $D_\Phi$ on the classical Hardy-Hilbert space $\mathbb{H}^2(\mathbb{D})$. Subsequently, in $2020$, Mahsa Fatehi and Christopher Hammond \cite{fatehi-PAMS-2020} obtained bounds for $D_\Phi$ on $\mathbb{H}^2(\mathbb{D})$. \vspace{2mm}\\ \vspace{0.5mm}
	 	 Motivated by the compactness condition outlined in \cite[Theorem 1.4]{Brevig-AdvMath-2021}, we aim to prove a similar decay condition for the composition-differentiation operator on $\mathcal{H}^2$.
	 	  Furthermore, we shall classify the symbols $\Phi$, for which the operator $D_\Phi$ on $\mathcal{H}^2$ is normal and self-adjoint. Bayart has demonstrated properties such as normality and self-adjointness for the composition operator $C_\Phi$ on $\mathcal{H}^p$, where $0 < p \leq \infty$. The behavior of the character $c_0$ is typically used to characterize the spectrum and point spectrum for the operator $D_\Phi$  on $\mathcal{H}^2$. Here we have examined the spectrum of the operator $D_\Phi$ using a particular symbol with zero characteristic. \vspace{2mm}\\\vspace{0.5mm}
	 	  \hspace{1mm}
	 	  The primary objective of this paper is to study the composition-differentiation operator with the Hardy-Hilbert space of Dirichlet series. The organization of the paper is structured as follows: In the beginning of Section $2$, we demonstrate the structure of $\Phi$ in the half-plane  $\mathbb{C}_\frac{1}{2}$ that defines a composition-differentiation operator $D_\Phi$ on $\mathcal{H}^2$. We establish a decay condition of the mean counting function of the symbol $\Phi$ at the boundary of a half-plane, which demonstrates the compactness of the composition-differentiation operator on $\mathcal{H}^2$. Additionally, we derive explicit mapping properties of $\Phi$  that define a bounded composition-differentiation operator $D_\Phi$. Furthermore, we obtain an estimate for the norm of the operator $D_\Phi$ with a specified symbol in the class $\mathcal{G}_0$. In Section $3$, we obtain an estimate of the upper bound for the approximation number of $D_\Phi$ on $\mathcal{H}^2$. In Section $4$, we provide a necessary and sufficient condition for self-adjointness and normality of this operator. We also produce a simple proof for Hilbert-adjointness of $D_\Phi$. Finally, in Section 5, we explore spectral properties of this operator in the class \(\mathcal{G}_0\) with the same specified symbol.
\section{Compactness and Boundedness of Composition-Differentiation Operator}
We study the composition-differentiation operator on Hardy-Hilbert space of Dirichlet series $\mathcal{H}^2$. Firstly, we shall demonstrate the structure of $\Phi$, which defines a composition-differentiation operator on $\mathcal{D}$. Then we shall examine the mapping properties of $\Phi$ that induce a bounded and compact composition-differentiation operator on $\mathcal{H}^2$.
\begin{thm}\label{Dipon-vasu-p1-thm-01}
The composition-differentiation operator $D_\Phi$ on $\mathcal{D}$ is generated by the holomorphic function $\Phi:\mathbb{C}_\theta\to\mathbb{C}_\frac{1}{2}$, $\theta\in\mathbb{R}$, if, and only if, it has the form 
\begin{equation}\label{Dipon-vasu-p1-eqn-022}
	 \Phi(s)= c_0s+\phi(s),
\end{equation} where $\phi$ is holomorphic in $\mathcal{D}$ and $n\in\mathbb{N}\cup \{0\}$.
\end{thm}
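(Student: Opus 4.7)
The plan is to reduce this statement to Gordon and Hedenmalm's classical characterization \cite{Gordon-Michigan-1999} of symbols inducing a composition operator on $\mathcal{D}$. The crucial observation would be that ordinary differentiation $D:f\mapsto f'$ maps $\mathcal{D}$ into itself: differentiating $\sum_{n\ge 1}a_n n^{-s}$ term-by-term produces $-\sum_{n\ge 2}a_n\log(n)\,n^{-s}$, which is again a convergent Dirichlet series in the same half-plane of convergence. Thus, as operators on $\mathcal{D}$, we may factor $D_\Phi=C_\Phi\circ D$, so the problem collapses into understanding when the symbol $\Phi$ induces a well-defined composition operator on $\mathcal{D}$.

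For the sufficiency direction, given $\Phi(s)=c_0 s+\phi(s)$ with $c_0\in\mathbb{N}\cup\{0\}$ and $\phi\in\mathcal{D}$, Gordon and Hedenmalm immediately yield that $C_\Phi$ preserves $\mathcal{D}$; composing with $D$ then transfers the property to $D_\Phi=C_\Phi\circ D$.

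For the necessity direction, I would test $D_\Phi$ against the Dirichlet monomials $e_n(s)=n^{-s}\in\mathcal{D}$ for $n\ge 2$. A direct computation gives
\[
D_\Phi e_n(s)=-\log(n)\,n^{-\Phi(s)},
\]
so the hypothesis that $D_\Phi e_n\in\mathcal{D}$ forces each function $n^{-\Phi(s)}$ to be representable as a convergent Dirichlet series on some half-plane. This is precisely the starting data used by Gordon and Hedenmalm, whose arithmetic and growth analysis of $p^{-\Phi(s)}$ for primes $p$ together with the uniform convergence assertions in \eqref{Dipon-vasu-p1-eqn-046} pins down the structural form $\Phi(s)=c_0 s+\phi(s)$ with $c_0$ a nonnegative integer and $\phi$ holomorphic as a Dirichlet series.

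The main obstacle I expect is verifying that the Gordon--Hedenmalm machinery, originally deployed for $C_\Phi$ acting on the full space $\mathcal{D}$, can be triggered by the weaker-looking hypothesis about derivatives of individual monomials. Since the scalar factor $-\log(n)$ is nonzero for $n\ge 2$ and the $e_n$ themselves already lie in $\mathcal{D}$, this obstacle is only cosmetic: testing on monomials delivers enough structural data to invoke the Gordon--Hedenmalm conclusion verbatim, and the integer $c_0$ (the characteristic) emerges exactly as in their analysis.
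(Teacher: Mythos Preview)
Your proposal is correct and follows essentially the same strategy as the paper's own sketch (given as a remark rather than a formal proof): both exploit that $f$ and $f'$ share the same abscissa of absolute convergence to reduce the question to Gordon--Hedenmalm's characterization of symbols for $C_\Phi$, and both recover the structure of $\Phi$ in the necessity direction by examining $n^{-\Phi(s)}$. Your explicit factorization $D_\Phi=C_\Phi\circ D$ makes the sufficiency direction slightly more transparent than the paper's outline, but the underlying argument is the same.
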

\begin{rem}
G\"{o}rdon and Hedenmalm's proof of \cite[Theorem A]{Gordon-Michigan-1999} for the case of the composition operator $C_\Phi$ on $\mathcal{H}^2$ will serve as a basis for the proof of Theorem \ref{Dipon-vasu-p1-thm-01} mentioned above.  It is important to note that both \( f \) and \( f' \) share the same abscissa of absolute convergence. The following are the primary steps to prove Theorem \ref{Dipon-vasu-p1-thm-01}. First, to verify the necessary structure of $\Phi$ as indicated in the equation \eqref{Dipon-vasu-p1-eqn-022}, we must assume $f'\circ\Phi\in\mathcal{D}$. Furthermore, we need to demonstrate that the quotient ${\Phi(s)}/s$ converges to $c_0$ as $\Re(s)\to +\infty$ and $\Phi(s)\in\mathcal{D}$, that is, $\Phi(s)$ is a convergent Dirichlet series generated by a multiplicative semi-group $\{1+ j/k\}_{j\ge 1}$ that consists only of positive integers. Subsequently, we must verify $ f'\circ\Phi\in\mathcal {D}$ for the sufficiency part, under the specified structure of $\Phi$. Our goal is to demonstrate that $f'\circ\Phi$ is convergent in some half-plane provided the condition $\Re(c_1)> 1/2$. Then, Theorem \ref{Dipon-vasu-p1-thm-01} will follow. \qed
\end{rem}

We are interested to consider the case $\mathcal{G}_0$, which corresponds to the zero characteristic case for the symbol $\Phi$ of the  composition-differentiation operator $D_\Phi$ on $\mathcal{H}^2$.
From Carlson's lemma \cite[Lemma 3.2]{Hedenmalm-Duke-1997}, we can deduce the following formula of Littlewood-Paley \cite[Chapter 10]{Shapiro-Book-1992} type
\begin{equation}\label{Dipon-vasu-p1-eqn-003}
	\norm{f}_{\mathcal{H}^2}^2=|f(\infty)|^2+ 2\lim_{\sigma_0\to 0^+}\lim_{T\to\infty}\frac{1}{T}\int_{\sigma_0}^{\infty}\int_{-T}^{T}\sigma|f'(\sigma+it)|^2 \,dt d\sigma,
\end{equation}
for every $f\in\mathcal{H}^2$ and $\sigma_u(f)\le 0$, where
\begin{equation*}
	\sigma_u(f): = \inf\{\sigma\in\mathbb{R}: \sum_{n=1}^{\infty}a_nn^{-s}\,\,\,\mbox{converges uniformly in}\,\,\,\overline{\mathbb{C}_\sigma}\}
\end{equation*}
represents the abscissa of uniform convergence of $f$. Similarly, we can define abscissa of ordinary convergence $\sigma_c(f)$ and abscissa of absolute convergence $\sigma_a(f).$
For the symbol $\Phi\in\mathcal{G}_0$ and by applying a suitable non-injective change of variables of the equation (\refeq{Dipon-vasu-p1-eqn-003}), we obtain the following 
\begin{equation}\label{Dipon-vasu-p1-eqn-005}
	\norm{D_\Phi(f)}_{\mathcal{H}^2}^2= \norm{f'\circ\Phi}_{\mathcal{H}^2}^2=|f'(\Phi(\infty))|^2+ \frac{2}{\pi}\int_{\mathbb{C}_\frac{1}{2}}|f''(w)|^2M_\Phi(w)\,dA(w),
\end{equation}
for every $f\in\mathcal{H}^2$, where $M_\Phi(w)$ is the mean counting function which is defined as, for $w \ne f(\infty)$,
\begin{equation}\label{Dipon-vasu-p1-eqn-038}
		M_\Phi(w) = \lim_{\sigma\to 0^+}\lim_{T\to\infty}\frac{\pi}{T}\sum_{\substack{s\in \Phi^{-1}(\{w\})\\|\Im(s)|>T\\\sigma<\Re(s)<\infty}} \Re(s).
\end{equation} 
The change of variable formula \eqref{Dipon-vasu-p1-eqn-005} is analogous to \cite[Theorem 1.3]{Brevig-AdvMath-2021} for the case of composition operator $C_\Phi$ on $\mathcal{H}^2$. Furthermore, in view of \cite[Theorem 1.2]{Brevig-AdvMath-2021}, the mean counting function satisfies
\begin{equation*}
	M_\Phi(w)= \log\abs{\frac{w+\overline{v}-1}{w-v}},
\end{equation*}
 for every $w\in\mathbb{C}_{\frac{1}{2}}$ and  $v= \Phi(+\infty)\in\mathbb{C}_{\frac{1}{2}}$, where $w =\sigma+ it\ne\Phi(+\infty) = v$. \hspace{1mm}\\ \hspace{2mm}
 
 For any $a\in\mathbb{C}_\frac{1}{2}$, the reproducing kernel $k_a$ for the point evaluation function  $f$ on $\mathcal{H}^2$ is defined by\\
 \begin{equation*}
 	\innpdct{f,k_a} = f(a).
 \end{equation*} Here the kernel is given by
 \begin{equation*}
 	k_a(s)= \zeta(s+\bar{a}),
 \end{equation*} 
 where $s\in\mathbb{C}_\frac{1}{2}$ and $\zeta$ is the Riemann zeta function. We denote $k_a^{(1)}$ as the reproducing kernel for the point evaluation of the first derivative function $f$, defined by 
 \begin{equation*}
 	\innpdct{f, k_a^{(1)}} = f'(a),
 \end{equation*} where  
 \begin{equation*}
 	k_a^{(1)}(s)=-\sum_{n=1}^{\infty}n^{-(s+\bar{a})}\log n.
 \end{equation*}\\
 Then it is evident that
 \begin{equation*}
 	\norm{k_a}_{\mathcal{H}^2}=\sqrt{\sum_{n=1}^{\infty}n^{-(a+\bar{a})}}=\sqrt{\zeta(2\Re(a))}
 \end{equation*}
 and
 \begin{equation*}
 	\norm{k_a^{(1)}}_{\mathcal{H}^2}=\sqrt{(k_a^{(1)})'(a)}=\sqrt{\sum_{n=1}^{\infty}n^{-(a+\bar{a})}(\log n)^2},
 \end{equation*}
 for every $a\in\mathbb{C}_\frac{1}{2}$. We shall use the change of variable formula \eqref{Dipon-vasu-p1-eqn-005} to characterize the  compact composition-differentiation operator $D_\Phi$ on $\mathcal{H}^2$.
\begin{thm}\label{Dipon-vasu-p1-thm-09}
		Let $\Phi\in\mathcal{G}_0$ and $M_\Phi$ denote the associated mean counting function \eqref{Dipon-vasu-p1-eqn-038}. Then the induced composition-differentiation operator $D_\Phi$ is compact on $\mathcal{H}^2$ if, and only if,
		\begin{equation}\label{Dipon-vasu-p1-eqn-039}
			\lim_{\Re(w)\to\frac{1}{2}^+} \frac{M_\Phi(w)}{(\Re(w) - \frac{1}{2})^3} = 0.
		\end{equation}
	\end{thm}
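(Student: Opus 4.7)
I plan to follow the strategy of Brevig and Perfekt \cite[Theorem 1.4]{Brevig-AdvMath-2021} for the composition operator $C_\Phi$, transported to $D_\Phi$ by means of the change-of-variable formula \eqref{Dipon-vasu-p1-eqn-005}. The exponent $3$ in \eqref{Dipon-vasu-p1-eqn-039}, as opposed to the exponent $1$ appearing in the $C_\Phi$ criterion, is forced by the fact that $|f''|^2$ replaces $|f'|^2$ in the weight: the reproducing kernel $k_a^{(2)}$ for second-derivative evaluation satisfies $\|k_a^{(2)}\|_{\mathcal{H}^2}^2 = \zeta^{(4)}(2\Re(a)) \asymp (\Re(a) - 1/2)^{-5}$ as $\Re(a)\to 1/2^+$, which is two orders more singular than $\|k_a\|^2 \asymp (\Re(a)-1/2)^{-1}$.

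For sufficiency, assume \eqref{Dipon-vasu-p1-eqn-039}. It suffices to show $\|D_\Phi f_n\|_{\mathcal{H}^2}\to 0$ whenever $(f_n)\subset \mathcal{H}^2$ is bounded with $f_n\to 0$ weakly. Weak convergence implies that $f_n^{(k)}\to 0$ uniformly on compacta of $\mathbb{C}_{1/2}$ for each $k$; in particular $|f_n'(\Phi(\infty))|^2 = |\innpdct{f_n, k_{\Phi(\infty)}^{(1)}}|^2 \to 0$, handling the first term of \eqref{Dipon-vasu-p1-eqn-005}. For the integral, fix $\varepsilon>0$ and choose $\delta>0$ so that $M_\Phi(w)\le \varepsilon(\Re(w)-1/2)^3$ whenever $\Re(w)<1/2+\delta$. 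On this near-boundary strip, the problem reduces to the weighted estimate
\begin{equation*}
    \int |f''(w)|^2 (\Re(w)-1/2)^3 \, dA(w) \,\le\, C\,\|f\|_{\mathcal{H}^2}^2,
\end{equation*}
understood in the appropriate Bohr-averaged sense dictated by \eqref{Dipon-vasu-p1-eqn-003}. This I verify by Parseval--Carlson on each vertical line followed by a Gamma-integral computation in $\sigma$, which produces coefficients of order $1/n$ and hence the claimed bound. On the complement $\{\Re(w)\ge 1/2+\delta\}$, boundedness of $D_\Phi$ (a by-product of the decay hypothesis) makes $|f''|^2 M_\Phi\, dA$ a finite measure, and local uniform convergence of $f_n''\to 0$ together with the pointwise decay $|f_n''(w)|^2 \le \|f_n\|^2\,\zeta^{(4)}(2\Re(w))$ at infinity yields vanishing in the limit.

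For necessity, assume $D_\Phi$ is compact. For $a\in \mathbb{C}_{1/2}$ introduce the unit-norm test functions
\begin{equation*}
    g_a \,:=\, \frac{k_a^{(2)}}{\|k_a^{(2)}\|_{\mathcal{H}^2}}.
\end{equation*}
Since $\|k_a^{(2)}\|\to \infty$ as $\Re(a)\to 1/2^+$ and $\innpdct{p, g_a} = p''(a)/\|k_a^{(2)}\|$ for any Dirichlet polynomial $p$, density of polynomials gives $g_a\to 0$ weakly. Compactness therefore forces $\|D_\Phi g_a\|_{\mathcal{H}^2}\to 0$. On the other hand, applying \eqref{Dipon-vasu-p1-eqn-005} together with the sub-mean-value inequality for the analytic function $g_a''$ on the Euclidean disk $D(a,\rho)$ of radius $\rho=(\Re(a)-1/2)/2$, and using the peak value $|g_a''(a)|^2 = \|k_a^{(2)}\|^2 \asymp (\Re(a)-1/2)^{-5}$ combined with Harnack-type local comparability $M_\Phi(w)\asymp M_\Phi(a)$ on $D(a,\rho)$ (consistent with the explicit expression for $M_\Phi$ recorded immediately after \eqref{Dipon-vasu-p1-eqn-005}), I obtain
\begin{equation*}
    \|D_\Phi g_a\|_{\mathcal{H}^2}^2 \,\ge\, \frac{2}{\pi}\int_{D(a,\rho)} |g_a''(w)|^2 M_\Phi(w)\, dA(w) \,\gtrsim\, \frac{M_\Phi(a)}{(\Re(a)-1/2)^3}.
\end{equation*}
Hence $M_\Phi(a)/(\Re(a)-1/2)^3 \to 0$, which is \eqref{Dipon-vasu-p1-eqn-039}.

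The hardest steps will be establishing the averaged weighted Carleson-type estimate for $|f''|^2$ against the weight $(\Re(w)-1/2)^3$ on the sufficiency side, and justifying the local (Harnack-type) comparability of $M_\Phi$ on Bergman-scale boundary disks on the necessity side; the remainder is a routine adaptation of the analysis in \cite{Brevig-AdvMath-2021}.
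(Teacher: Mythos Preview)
Your overall strategy matches the paper's, but both halves contain a genuine gap that the paper's proof is designed to avoid.

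On the sufficiency side, the weighted inequality $\int_{\mathbb{C}_{1/2}}|f''(w)|^2(\Re(w)-\tfrac12)^3\,dA(w)\le C\|f\|_{\mathcal{H}^2}^2$ is \emph{false} as an honest area integral: for $f(s)=2^{-s}$ the integrand is independent of $\Im(w)$ and the $t$--integral diverges. Appealing to a ``Bohr-averaged sense'' does not help, because the change-of-variable formula \eqref{Dipon-vasu-p1-eqn-005} is a genuine $dA$ integral, not an averaged one. What makes that integral finite is the decay of $M_\Phi(w)$ as $|\Im(w)|\to\infty$, and this is precisely what you throw away when you replace $M_\Phi(w)$ by $\varepsilon(\Re(w)-\tfrac12)^3$. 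The paper repairs this by first proving the sharper bound $M_\Phi(w)\le\varepsilon^2(\Re(w)-\tfrac12)^3/|w-\Phi(+\infty)|^{1+\delta}$ (Lemma~\ref{Dipon-vasu-p1-lem-006}), which retains decay in the imaginary direction, and then establishing the matching weighted estimate (Lemma~\ref{Dipon-vasu-p1-lem-008}).

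On the necessity side, the step you flag as hardest is in fact unavailable: $M_\Phi$ satisfies only a one-sided submean value inequality (Lemma~\ref{Dipon-vasu-p1-lem-004}), not a two-sided Harnack comparability, and $M_\Phi$ may well vanish on large portions of $D(a,\rho)$ (namely on values not attained by $\Phi$), so $M_\Phi(w)\asymp M_\Phi(a)$ on the disk cannot be expected. The paper instead applies the submean inequality to $M_\Phi$ (the direction that \emph{is} available) and pairs it with a pointwise \emph{lower} bound $|k''_{w_j}(s)|^2\gtrsim r_j^{-5}$ on the disk, obtained from the Laurent expansion of $\zeta$ near $s=1$ (Lemma~\ref{Dipon-vasu-p1-lem-005}); this gives $M_\Phi(w_j)/(\Re(w_j)-\tfrac12)^3\lesssim\|D_\Phi k_{w_j}\|^2\to 0$ directly. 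Your test functions $k_a^{(2)}/\|k_a^{(2)}\|$ would work just as well in this scheme, but the roles of the two factors in the integrand must be swapped: the submean inequality goes on $M_\Phi$, and the uniform lower bound goes on $|g_a''|^2$.
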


By using reproducing kernels of $\mathcal{H}^2$ and the submean value property (as stated in Lemma \ref{Dipon-vasu-p1-lem-004}) of the mean counting function, it can be easily demonstrated that the condition \eqref{Dipon-vasu-p1-eqn-039} is necessary for the compactness of the operator $D_\Phi$. The change of variable formula \eqref{Dipon-vasu-p1-eqn-005} plays a crucial role in proving the sufficiency of Theorem \ref{Dipon-vasu-p1-thm-09}. Now, the following theorem serves as an essential counterpart to \cite[Theorem 1.4]{Athanos-JLMS-2023}, established within the framework provided by Theorem \ref{Dipon-vasu-p1-thm-09}.
 \begin{thm}\label{Dipon-vasu-p1-thm-010}
 		Consider the symbol $\Phi\in\mathcal{G}_0$. If the composition-differentiation operator $D_\Phi$ is bounded on $\mathcal{H}^2$, then for every $\delta>0$, there exists a constant $C$ that depends on $\delta$ such that
 		\begin{equation}\label{Dipon-vasu-p1-eqn-040}
 			\frac{M_\Phi(w)}{(\Re(w) - \frac{1}{2})^3}< C,
 		\end{equation}
 		for every $w\in\mathbb{C}_{\frac{1}{2}}$ that does not attain the open disk $\mathbb{D}((\Phi(+\infty)), \delta)$.
 		In particular, if the operator $D_\Phi$ is compact on $\mathcal{H}^2$, then
 		\begin{equation*}
 			\lim_{\Re(w)\to\frac{1}{2}^+} \frac{M_\Phi(w)}{(\Re(w) - \frac{1}{2})^3} = 0.
 		\end{equation*}
 \end{thm}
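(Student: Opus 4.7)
The plan is to test $D_\Phi$ against the normalized reproducing kernels $f_a := k_a/\norm{k_a}_{\mathcal{H}^2}$, and combine the change of variable formula \eqref{Dipon-vasu-p1-eqn-005} with the submean value property of $M_\Phi$ supplied by Lemma \ref{Dipon-vasu-p1-lem-004} to extract a pointwise bound on $M_\Phi$. Writing $v := \Phi(+\infty)$ and using $k_a(s) = \zeta(s+\bar{a})$ together with $\norm{k_a}_{\mathcal{H}^2}^2 = \zeta(2\Re(a))$, one has
\begin{equation*}
	f_a''(s) \;=\; \frac{\zeta''(s+\bar{a})}{\sqrt{\zeta(2\Re(a))}},
\end{equation*}
and the Laurent expansions $\zeta(s) = 1/(s-1) + O(1)$ and $\zeta''(s) \sim 2/(s-1)^3$ near $s=1$ drive every estimate below.

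For the boundedness statement, I would fix $\delta > 0$, pick $a \in \mathbb{C}_{\frac{1}{2}}$ with $\abs{a-v} > \delta$, and set $r := \min\bigl((2\Re(a)-1)/8,\, \delta/2\bigr)$. A triangle-inequality argument gives $\abs{w+\bar{a}-1} \asymp 2\Re(a)-1$ uniformly on $\mathbb{D}(a, r)$, and the Laurent expansion of $\zeta''$ then produces a uniform lower bound $\abs{f_a''(w)}^2 \gtrsim 1/(2\Re(a)-1)^5$ throughout this disk, at least once $\Re(a)$ is sufficiently close to $1/2$. Since $\norm{f_a}_{\mathcal{H}^2} = 1$, boundedness of $D_\Phi$ and the change of variable formula \eqref{Dipon-vasu-p1-eqn-005} produce
\begin{equation*}
	\norm{D_\Phi}^2 \;\ge\; \norm{D_\Phi f_a}_{\mathcal{H}^2}^2 \;\ge\; \frac{2}{\pi}\int_{\mathbb{D}(a, r)} \abs{f_a''(w)}^2 M_\Phi(w)\, dA(w).
\end{equation*}
The disk $\mathbb{D}(a, r)$ sits inside $\mathbb{C}_{\frac{1}{2}}\setminus\{v\}$ by the choice of $r$, so Lemma \ref{Dipon-vasu-p1-lem-004} supplies $\int_{\mathbb{D}(a,r)} M_\Phi\, dA \ge \pi r^2 M_\Phi(a)$; combined with $r^2 \asymp (2\Re(a)-1)^2$, this chain yields $M_\Phi(a)/(\Re(a)-1/2)^3 \le C(\delta)\norm{D_\Phi}^2$, which is precisely \eqref{Dipon-vasu-p1-eqn-040}. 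The range where $\Re(a)$ is bounded away from $1/2$ (while $\abs{a-v} > \delta$) is handled separately using the decay of $M_\Phi$ at infinity together with its local subharmonicity, both providing a uniform ceiling on $M_\Phi$ in a region where the denominator is bounded below.

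For the compactness conclusion, the key observation is that $f_a \to 0$ weakly in $\mathcal{H}^2$ as $\Re(a) \to 1/2^+$: for a Dirichlet polynomial $P$, $\innpdct{P, f_a} = P(a)/\sqrt{\zeta(2\Re(a))} \to 0$ since $P(a)$ stays bounded while $\zeta(2\Re(a)) \to \infty$, and density of $\mathcal{P}$ in $\mathcal{H}^2$ together with the uniform bound $\abs{\innpdct{g,f_a}} \le \norm{g}_{\mathcal{H}^2}$ extends this to every $g \in \mathcal{H}^2$. Compactness of $D_\Phi$ therefore forces $\norm{D_\Phi f_a}_{\mathcal{H}^2} \to 0$, and rerunning the inequality chain upgrades the bound to $M_\Phi(a)/(\Re(a)-1/2)^3 \to 0$ uniformly for $a$ outside $\mathbb{D}(v, \delta)$. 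The principal obstacle is securing the sharp uniform lower bound on $\abs{f_a''(w)}^2$ over $\mathbb{D}(a, r)$: one must balance the diverging main term $2/(s-1)^3$ in the Laurent expansion of $\zeta''$ against the bounded correction, which forces the choice $r \asymp 2\Re(a)-1$ and ultimately pins down the exponent $3$ in the final estimate.
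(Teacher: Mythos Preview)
Your proposal is correct and follows essentially the same approach as the paper. The paper states that the proof of Theorem~\ref{Dipon-vasu-p1-thm-010} is ``completely analogous to the proof of the necessary part in Theorem~\ref{Dipon-vasu-p1-thm-09},'' and indeed your argument reproduces that scheme: normalized reproducing kernels $k_a/\sqrt{\zeta(2\Re(a))}$, the submean value property of $M_\Phi$ (Lemma~\ref{Dipon-vasu-p1-lem-004}), the lower bound $\abs{f_a''}^2 \gtrsim (\Re(a)-1/2)^{-5}$ on a disk of radius comparable to $\Re(a)-1/2$ via the Laurent expansion of $\zeta$ (Lemma~\ref{Dipon-vasu-p1-lem-005}), and the change of variable formula~\eqref{Dipon-vasu-p1-eqn-005}; for the compact case you correctly invoke weak convergence $f_a \to 0$ as $\Re(a)\to 1/2^+$.
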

 We note that, Theorem \ref{Dipon-vasu-p1-thm-010}, is a significant consequence of \cite[Corollary 3.2]{Ohno-BAMS-2006} with regard to the classical Hardy-Hilbert space $\mathbb{H}^2(\mathbb{D})$, particularly in relation to the classical Nevanlinna counting function \cite{Shapiro-annals-1987}.
 \begin{defn}
 	We define a Dirichlet series $f$ with $\sigma_u(f)\le 0$ to be in the \textit{Nevanlinna class} $\mathcal{N}_u$ if the following condition holds:
 	\begin{equation*}
 		\overline{\lim_{\sigma\to 0^+}}\lim_{T\to\infty}\frac{1}{2T}\int_{-T}^{T}\log^+|f(\sigma +it)|\,dt<\infty,
 	\end{equation*}
 \end{defn}
 where $\log^+(f) = \max\{\log(f), 0\}$.
 The following lemma is important to prove Theorem \ref{Dipon-vasu-p1-thm-09}.
 	\begin{lem}\cite{Brevig-AdvMath-2021}\label{Dipon-vasu-p1-lem-004}
 		Let $f\in\mathcal{N}_u$. Then the mean counting function $M_f$ satisfies the following submean value property:
 		\begin{equation*}
 			M_f(w)\le\frac{1}{\pi r^2}\int_{\mathbb{D}(w,r)} M_f(s)\, ds,
 		\end{equation*}
 		for every open disk $\mathbb{D}(w,r)$ which does not attain $f(+\infty)$.
 	\end{lem}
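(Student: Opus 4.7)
\textbf{Proof proposal for Lemma \ref{Dipon-vasu-p1-lem-004}.}

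The plan is to show that $M_f$ is subharmonic on $\mathbb{C}_0 \setminus \{f(+\infty)\}$, at which point the desired inequality is the classical submean property of subharmonic functions on any disk $\mathbb{D}(w,r)$ that does not attain $f(+\infty)$.

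First I would establish a Bohr--Jensen-type identity in a vertical strip. For each $\sigma_0 > 0$ and each $w \neq f(+\infty)$, the goal is the equality
\[ \lim_{T\to\infty}\frac{1}{2T}\int_{-T}^{T}\log|f(\sigma_0+it)-w|\,dt \;=\; \log|f(+\infty)-w| \;+\; M_f^{\sigma_0}(w), \]
where $M_f^{\sigma_0}(w)$ denotes the truncation of the mean counting function to the half-plane $\{\Re(s)>\sigma_0\}$. This is the almost-periodic analogue of Jensen's formula, obtained by applying the argument principle to rectangles $[\sigma_0,R]\times[-T,T]$, controlling the right edge as $R\to\infty$ using $\sigma_u(f)\le 0$ and the definition of $f(+\infty)$, and then passing to the Bohr-type mean at the left edge. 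The hypothesis $f\in\mathcal{N}_u$ is precisely what ensures the left-hand side is finite and that the $T\to\infty$ limit exists.

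Next, for each fixed $s$, the map $w\mapsto\log|f(s)-w|$ is subharmonic on $\mathbb{C}$ (and harmonic off $f(s)$). Since subharmonicity is preserved under positive integral averages, the left-hand side of the identity is subharmonic in $w$ on $\mathbb{C}$. On $\mathbb{C}\setminus\{f(+\infty)\}$ the term $\log|f(+\infty)-w|$ is harmonic, so the identity forces $M_f^{\sigma_0}$ to be subharmonic there. Sending $\sigma_0\to 0^+$, the truncations $M_f^{\sigma_0}$ increase monotonically to $M_f$, and the monotone limit of an increasing sequence of subharmonic functions is subharmonic provided its upper semicontinuous regularization agrees with the limit almost everywhere; these conditions follow from local finiteness of $M_f$ off $\{f(+\infty)\}$ together with the $\mathcal{N}_u$ bound. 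Applying the standard submean inequality to the subharmonic $M_f$ on any $\mathbb{D}(w,r)$ avoiding $f(+\infty)$ gives exactly
\[ M_f(w)\;\le\;\frac{1}{\pi r^2}\int_{\mathbb{D}(w,r)} M_f(\zeta)\,dA(\zeta). \]

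The main obstacle is the careful justification of the Bohr--Jensen identity in the Dirichlet-series half-plane, where vertical-line boundary values are neither bounded nor classically controlled, and the subsequent passage $\sigma_0\to 0^+$. The role of the class $\mathcal{N}_u$ is to supply the uniform integrability of $\log^+|f(\sigma_0+it)-w|$ along the vertical lines $\Re(s)=\sigma_0$ that is needed both to make the limit in $T$ well-defined and to preserve subharmonicity in the limit in $\sigma_0$. Once that analytic groundwork is in place, the subharmonicity conclusion, and hence the submean inequality, follows from the standard potential-theoretic submean value property.
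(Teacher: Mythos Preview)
The paper does not supply its own proof of this lemma; it is quoted directly from Brevig and Perfekt \cite{Brevig-AdvMath-2021} and used as a black box in the proof of Theorem~\ref{Dipon-vasu-p1-thm-09}. So there is no in-paper argument to compare your proposal against.

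That said, your outline is faithful to the strategy of the original source. Brevig and Perfekt do establish a Jessen--type formula expressing the vertical mean of $\log|f(\sigma_0+it)-w|$ in terms of $\log|f(+\infty)-w|$ plus the truncated counting function, and the submean value property is then extracted from the subharmonicity in $w$ of the logarithmic mean, exactly as you sketch. The point you flag as the ``main obstacle'' --- making the $T\to\infty$ limit rigorous along vertical lines and then passing $\sigma_0\to 0^+$ while retaining subharmonicity --- is genuinely where the work lies, and the class $\mathcal{N}_u$ is introduced precisely to supply those bounds. One technical caveat: your assertion that ``subharmonicity is preserved under positive integral averages'' is fine for the finite-$T$ averages, but the passage $T\to\infty$ is a limit, not an average, so you need upper semicontinuity and local integrability of the limit to conclude; you acknowledge this later for the $\sigma_0$-limit but should apply the same care at the $T$-step. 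With that adjustment your plan matches the cited proof.
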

 	To prove Theorem \ref{Dipon-vasu-p1-thm-09}, we need the following lemmas.
 	\begin{lem}\label{Dipon-vasu-p1-lem-005}
 		For $\Re(s)>1$, assume $\zeta(s) = \sum_{n=1}^{\infty}n^{-s}$. Then, on the half-plane $\mathbb{C}_0$, there exists an entire function $E$ such that
 		\begin{equation*}
 			\zeta(s) = \frac{1}{s-1} + E(s),\,\,\, s\in\mathbb{C}_1.
 		\end{equation*}
 	\end{lem}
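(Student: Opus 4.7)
The plan is to realize $E$ as $\zeta(s)-\frac{1}{s-1}$ and prove holomorphy via the standard integral representation obtained from Abel summation.

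First, for $s\in\mathbb{C}_1$ I would apply partial summation to $\sum_{n=1}^{N}n^{-s}$ and pass to the limit $N\to\infty$, which produces the classical identity
\begin{equation*}
\zeta(s) \;=\; \frac{s}{s-1} \;-\; s\int_{1}^{\infty}\frac{\{x\}}{x^{s+1}}\,dx,
\end{equation*}
where $\{x\}=x-\lfloor x\rfloor$ denotes the fractional part. This is the one non-trivial computational step, but it is a routine consequence of writing $\sum_{n\le x} 1 = x-\{x\}$ and integrating by parts.

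Next, I would define
\begin{equation*}
I(s) \;:=\; \int_{1}^{\infty}\frac{\{x\}}{x^{s+1}}\,dx, \qquad E(s) \;:=\; 1 - s\,I(s).
\end{equation*}
Since $0\le\{x\}<1$, the integrand is dominated by $x^{-\Re(s)-1}$, so on every compact subset of $\mathbb{C}_0$ the integral converges absolutely and uniformly. By Morera's theorem (or by differentiating under the integral sign, justified by the same domination), $I$ is holomorphic on $\mathbb{C}_0$, and hence so is $E$.

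Finally, a trivial algebraic manipulation shows that on $\mathbb{C}_1$,
\begin{equation*}
\zeta(s)-\frac{1}{s-1} \;=\; \frac{s}{s-1}-\frac{1}{s-1}-sI(s) \;=\; 1-sI(s) \;=\; E(s),
\end{equation*}
which is exactly the claimed identity. The only potential obstacle is the verification that $E$ extends to an \emph{entire} function on all of $\mathbb{C}$ rather than merely to $\mathbb{C}_0$; this requires iterating the Euler-Maclaurin expansion (or invoking the functional equation) to push the domain of convergence arbitrarily far to the left, but for the applications in this paper only the holomorphic extension to $\mathbb{C}_0$ is used, so the argument above suffices.
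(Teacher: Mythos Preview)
Your argument is correct. The Abel--summation identity $\zeta(s)=\tfrac{s}{s-1}-s\int_{1}^{\infty}\{x\}\,x^{-s-1}\,dx$ is standard and your justification of holomorphy of $I(s)$ on $\mathbb{C}_0$ via uniform domination on compacta is clean; the algebra giving $\zeta(s)-\tfrac{1}{s-1}=1-sI(s)$ is then immediate. You are also right to flag the discrepancy between the word ``entire'' in the statement and what is actually needed and proved: holomorphy on $\mathbb{C}_0$ is all that the paper ever uses (in the proof of Theorem~\ref{Dipon-vasu-p1-thm-09} the argument of $\zeta$ stays near $1$), so your construction suffices.

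The paper itself does not prove this lemma at all; it simply remarks that it is the special case $a=1$ of \cite[Lemma~5.1]{Athanos-JLMS-2023}. So your approach is genuinely different in that it is self-contained and elementary, supplying the classical integral representation directly rather than invoking an external reference for a more general weighted version. The trade-off is that the cited lemma in \cite{Athanos-JLMS-2023} presumably handles a one-parameter family in a single stroke, whereas your proof is tailored to the unweighted zeta function; for the present purpose your direct argument is arguably preferable since it keeps the exposition independent of that source.
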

 In the specific case where \( a = 1 \) as mentioned in \cite[Lemma 5.1]{Athanos-JLMS-2023}, the proof of Lemma \ref{Dipon-vasu-p1-lem-005} is evident.

\begin{lem}\label{Dipon-vasu-p1-lem-006}
		Let $\Phi\in\mathcal{G}_0$ and $\delta\in(0,1)$ be fixed. Then for every $\epsilon>0$, there is some $1/2<\theta<(1/2+\Re(\Phi(+\infty)))/2$, such that if $1/2<\Re(w)<\theta$, then
		\begin{equation*}
			M_\Phi(w)\le\epsilon^2\frac{(\Re(w)-1/2)^3}{|w-\Phi(+\infty)|^{1+\delta}}.
		\end{equation*}
	\end{lem}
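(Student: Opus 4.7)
The strategy is to interpolate between two natural upper bounds on $M_\Phi$: the Littlewood-type inequality $M_\Phi(w) \le \log\left|\frac{w + \bar{v} - 1}{w - v}\right|$ (where $v := \Phi(+\infty)$) already recorded in the text, and the cubic boundary decay that enters the setting via Theorem \ref{Dipon-vasu-p1-thm-010}. Write $\mu := \Re(w) - 1/2$, $\rho := |w - v|$, and $A := \Re(v) - 1/2 > 0$; the restriction $\theta < (1/2 + \Re(v))/2$ guarantees $\rho \ge A/2$ throughout.

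First, I would record the two bounds quantitatively. A direct computation gives the identity $|w + \bar{v} - 1|^2 - |w - v|^2 = 4A\mu$, so that
\[
M_\Phi(w) \le \log\left|\frac{w + \bar{v} - 1}{w - v}\right| = \frac{1}{2}\log\!\left(1 + \frac{4A\mu}{\rho^2}\right) \le \frac{2A\mu}{\rho^2},
\]
using $\log(1+x) \le x$. On the other hand, the decay furnished by Theorem \ref{Dipon-vasu-p1-thm-010} produces, for any $\eta > 0$, a threshold $\rho_\eta > 1/2$ such that $M_\Phi(s) \le \eta(\Re(s) - 1/2)^3$ whenever $\Re(s) < \rho_\eta$.

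Next, I would apply the elementary estimate $\min(X, Y) \le X^\alpha Y^{1-\alpha}$ with $\alpha = (1+\delta)/2 \in (1/2, 1)$, matching the exponent of $\rho$ in the denominator to the target $1+\delta$. This yields
\[
M_\Phi(w) \le (2A)^{(1+\delta)/2}\, \eta^{(1-\delta)/2}\, \frac{\mu^{2-\delta}}{\rho^{1+\delta}}.
\]
Comparing to the target $\epsilon^2 \mu^3 / \rho^{1+\delta}$, the desired inequality becomes
\[
\eta^{(1-\delta)/2} \le \frac{\epsilon^2\,\mu^{1+\delta}}{(2A)^{(1+\delta)/2}},
\]
which I can guarantee by choosing $\eta$ sufficiently small relative to $\mu$, and then setting $\theta$ so that $1/2 < \theta \le \rho_\eta$ and the resulting constraint is uniform over $w$ in the strip.

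The main technical obstacle is the apparent circularity: the required smallness of $\eta$ depends on $\mu = \Re(w) - 1/2$, while the decay condition only provides a uniform $\eta$ once the region $\Re(s) < \rho_\eta$ has been fixed. I would resolve this by partitioning the strip into three regimes: for $w$ with $|w - v|$ uniformly bounded the decay estimate alone suffices (since then $|w - v|^{-(1+\delta)}$ is bounded); for $w$ with $|w - v|$ sufficiently large relative to $\mu^{-2/(1-\delta)}$ the Littlewood bound alone suffices (the quadratic decay in $\rho$ dominates the loss of exponent $1 - \delta$ in the target); and in the intermediate regime the geometric-mean interpolation above closes the gap. Matching the boundaries of these three regimes dictates the choice of $\theta$ in terms of $\epsilon$, $\delta$, and $\Re(v)$, and completes the proof.
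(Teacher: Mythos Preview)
Your interpolation is the right idea for the $C_\Phi$ setting of \cite{Brevig-AdvMath-2021,Athanos-JLMS-2023}, but it does not close here. With $X=2A\mu/\rho^{2}$ and $Y=\eta\mu^{3}$ you correctly obtain
\[
X^{(1+\delta)/2}Y^{(1-\delta)/2}=(2A)^{(1+\delta)/2}\eta^{(1-\delta)/2}\,\frac{\mu^{\,2-\delta}}{\rho^{\,1+\delta}},
\]
which is short of the target $\epsilon^{2}\mu^{3}/\rho^{1+\delta}$ by a factor $\mu^{1+\delta}$. Since $\mu=\Re(w)-\tfrac12$ ranges over $(0,\theta-\tfrac12)$ and is not bounded away from $0$, no uniform choice of $\eta$ (hence of $\theta$) absorbs this deficit. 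The three-regime split does not rescue the argument: the intermediate region is precisely where the problem sits. Concretely, take $w$ in the strip with $\rho\sim\mu^{-1}$; then both the Littlewood bound and the decay bound give $M_\Phi(w)\lesssim\mu^{3}$, whereas the target is $\epsilon^{2}\mu^{3}\rho^{-(1+\delta)}\sim\mu^{4+\delta}$. Nothing in your two inputs forces $M_\Phi$ below $c\,\mu^{3}$ along such a sequence, so the pointwise inequality of the lemma cannot be deduced from the Littlewood inequality and the hypothesis~\eqref{Dipon-vasu-p1-eqn-039} alone, by any interpolation or case split.

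The paper does not attempt this route either: it simply invokes \cite[Proposition~5.4]{Athanos-JLMS-2023} together with condition~\eqref{Dipon-vasu-p1-eqn-039}. In the composition-operator setting of that reference the decay hypothesis carries exponent $1$ in $\mu$, matching the Littlewood bound, and then your interpolation yields $C\eta^{(1-\delta)/2}\mu/\rho^{1+\delta}$ directly, with no residual power of $\mu$ --- that is why the argument succeeds there. Transporting it to the present exponent~$3$ is not the mechanical substitution you propose. A smaller point: you appeal to Theorem~\ref{Dipon-vasu-p1-thm-010} to obtain the cubic decay, but its second clause \emph{assumes} compactness of $D_\Phi$, whereas Lemma~\ref{Dipon-vasu-p1-lem-006} is used in the paper to \emph{prove} compactness from~\eqref{Dipon-vasu-p1-eqn-039}; the correct input is~\eqref{Dipon-vasu-p1-eqn-039} itself, taken as an implicit hypothesis of the lemma.
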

	We note that, Lemma \ref{Dipon-vasu-p1-lem-006} follows from \cite[Proposition 5.4]{Athanos-JLMS-2023} and by the compactness condition \eqref{Dipon-vasu-p1-eqn-039} of the composition-differentiation operator $D_\Phi$.
	\begin{lem}\label{Dipon-vasu-p1-lem-007}
		Let $\Phi\in\mathcal{G}_0$ and $\{f_n\}_{n\ge 1}$ be a weakly convergent sequence in $\mathcal{H}^2$ that converges weakly to $0$. Then, for each $\theta>1/2$, we have
		\begin{equation*}
			\lim_{n\to\infty}\bigg(|f'_n(\Phi(\infty))|^2+\frac{2}{\pi}\int_{\Re(w)\ge\theta}|f''_n(w)|^2M_\Phi(w)\, dA(w)\bigg) = 0.
		\end{equation*}
	\end{lem}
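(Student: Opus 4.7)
The plan is to combine pointwise convergence $f_n^{(k)}(w)\to 0$ (obtained from weak convergence via reproducing kernels) with Lebesgue's dominated convergence theorem. First, by the uniform boundedness principle, $M:=\sup_n\norm{f_n}_{\mathcal{H}^2}<\infty$. Second, for every $w\in\mathbb{C}_\frac{1}{2}$ the linear functionals $f\mapsto f'(w)$ and $f\mapsto f''(w)$ are represented by the reproducing kernels $k_w^{(1)}$ and $k_w^{(2)}(s)=\sum_{n=1}^\infty(\log n)^2 n^{-(s+\overline{w})}$, both of which lie in $\mathcal{H}^2$ since $\Re(w)>1/2$. Consequently $f_n'(w)\to 0$ and $f_n''(w)\to 0$ for every $w\in\mathbb{C}_\frac{1}{2}$; in particular $|f_n'(\Phi(\infty))|^2\to 0$.

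For the integral term, Cauchy--Schwarz applied to the Dirichlet series of $f_n''$ yields the pointwise estimate
\begin{equation*}
|f_n''(w)|^2\le\norm{f_n}_{\mathcal{H}^2}^2\,\Psi(\Re(w))\le M^2\Psi(\Re(w)),\qquad \Psi(\sigma):=\sum_{k=1}^\infty (\log k)^4 k^{-2\sigma},
\end{equation*}
where $\Psi$ is finite and decreasing on $(1/2,\infty)$. Thus on $\{\Re(w)\ge\theta\}$ the integrand is dominated by $G(w):=M^2\Psi(\Re(w))M_\Phi(w)$. Since $|f_n''(w)|^2 M_\Phi(w)\to 0$ pointwise a.e., it suffices to verify that $G\in L^1(\{\Re(w)\ge\theta\},dA)$; the conclusion then follows from dominated convergence.

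The integrability of $G$ is the main obstacle, and I would establish it via Fubini using the explicit formula $M_\Phi(w)=\log|(w+\overline{v}-1)/(w-v)|$ with $v=\Phi(\infty)$. Writing $w=\sigma+it$ and applying the classical identity $\int_{-\infty}^\infty\log|(a+it)/(b+it)|\,dt=\pi(|a|-|b|)$ for real $a,b$ (valid here because $\Im(\overline{v}-1)=\Im(-v)$, so the odd $O(1/t)$ tail cancels) shows that $K(\sigma):=\int_{-\infty}^\infty M_\Phi(\sigma+it)\,dt$ is bounded on $[\theta,\infty)$ by a constant depending only on $v$. Combining this with
\begin{equation*}
\int_\theta^\infty\Psi(\sigma)\,d\sigma=\tfrac{1}{2}\sum_{k\ge 2}(\log k)^3 k^{-2\theta}<\infty
\end{equation*}
(finite since $\theta>1/2$) yields $\int G\,dA<\infty$. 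The crucial point is that the dampening comes entirely from the exponential decay of $\Psi(\sigma)$ as $\sigma\to\infty$: the integral $K(\sigma)$ does \emph{not} vanish but instead tends to the positive constant $\pi(2\Re(v)-1)$, so $M_\Phi$ alone is not integrable on $\{\Re(w)\ge\theta\}$ and the weight $\Psi(\Re(w))$ is essential.
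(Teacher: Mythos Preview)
Your argument is correct. The paper itself does not give a self-contained proof of this lemma; it simply appeals to \cite[Lemma~5.6]{Athanos-JLMS-2023} together with \cite[Lemma~18]{Bayart-monatsh-2002}, the latter supplying the fact that weak null sequences in $\mathcal{H}^2$ converge locally uniformly on $\mathbb{C}_\theta$ for each $\theta>1/2$. Your route is the same idea made explicit: rather than invoking local uniform convergence and then splitting the integral, you pass directly to dominated convergence with the majorant $G(w)=M^2\Psi(\Re(w))M_\Phi(w)$ and verify its integrability on $\{\Re(w)\ge\theta\}$ by hand via the Littlewood-type bound and the identity $\int_{\mathbb{R}}\log\bigl|(a+it)/(b+it)\bigr|\,dt=\pi(|a|-|b|)$. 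This is slightly more elementary and has the virtue of being fully self-contained; the paper's approach simply outsources the same estimates to the cited references.

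One small correction worth noting: what you call the ``explicit formula'' $M_\Phi(w)=\log\bigl|(w+\overline{v}-1)/(w-v)\bigr|$ is in general only the \emph{inequality} $M_\Phi(w)\le\log\bigl|(w+\overline{v}-1)/(w-v)\bigr|$ (the Littlewood-type bound of \cite[Theorem~1.1]{Brevig-AdvMath-2021}); equality holds only for special ``inner'' symbols. The paper's text is itself imprecise on this point. Since your argument only needs the upper bound to produce an integrable majorant, this does not affect the validity of your proof, but the wording should be adjusted.
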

In view of \cite[Lemma 5.6]{Athanos-JLMS-2023}, we can conclude that Lemma \ref{Dipon-vasu-p1-lem-007} follows. This also relies on the fact that
	\begin{equation*}
		\lim_{n\to\infty} f'_n(\Phi(+\infty)) = \lim_{n\to\infty}\innpdct{f_n, k_{\Phi(+\infty)}^{(1)}} = 0.
	\end{equation*} 
	The rest of the proof follows from \cite[Lemma 18]{Bayart-monatsh-2002}. The following lemma plays a crucial role in proving Theorem \ref{Dipon-vasu-p1-thm-09}.
\begin{lem}\label{Dipon-vasu-p1-lem-008}
		Let $v\in\mathbb{C}_{\frac{1}{2}}$ and $\delta>0$. Then, there exists a constant $C$ depends on $\delta$ such that
		\begin{equation*}
			\int_{1/2<\Re(w)<\theta}|f''(w)|^2\frac{(\Re(w)-1/2)^3}{|w-v|^{1+\delta}}\,dw\le\frac{C}{(\Re(v)-\theta)^{1+\delta}}\norm{f}_{\mathcal{H}^2}^2,
		\end{equation*}
		for every $f\in\mathcal{H}^2$ and $1/2<\theta<\Re(v)$.
	\end{lem}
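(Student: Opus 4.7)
The plan is to reduce to Dirichlet polynomials by density, control the inner $\tau$-integral through a dyadic decomposition of the weight $|w-v|^{-(1+\delta)}$ coupled with Montgomery--Vaughan's mean-value inequality, and then carry out the $\sigma$-integration explicitly via a gamma-type identity.

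First I would use the density of Dirichlet polynomials in $\mathcal{H}^2$ and Fatou's lemma to reduce to $f=\sum_{n\le N} a_n n^{-s}$, so that $f''(w)=\sum_n c_n n^{-w}$ with $c_n=a_n(\log n)^2$. Writing $w=\tfrac12+\sigma+i\tau$ and setting $b(\sigma):=\Re v-\tfrac12-\sigma\ge \Re v-\theta>0$, the integral becomes
\begin{equation*}
 I=\int_0^{\theta-1/2}\sigma^3\int_{-\infty}^{\infty}\frac{|f''(\tfrac12+\sigma+i\tau)|^2\,d\tau}{\bigl(b(\sigma)^2+(\tau-\Im v)^2\bigr)^{(1+\delta)/2}}\,d\sigma.
\end{equation*}
For the $\tau$-integral I would split $\mathbb{R}$ about $\Im v$ into the central disk $\{|\tau-\Im v|\le b(\sigma)\}$ and the dyadic annuli $\{2^{k-1}b(\sigma)<|\tau-\Im v|\le 2^k b(\sigma)\}$ for $k\ge 1$, on which the kernel is bounded above by $(2^{k-1}b(\sigma))^{-(1+\delta)}$. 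The Montgomery--Vaughan inequality applied to the Dirichlet polynomial $f''$ yields, for every $T>0$,
\begin{equation*}
 \int_{|\tau-\Im v|\le T}|f''(\tfrac12+\sigma+i\tau)|^2\,d\tau\le 2T\,A(\sigma)+C\,B(\sigma),
\end{equation*}
where $A(\sigma)=\sum_n|a_n|^2(\log n)^4 n^{-1-2\sigma}$ and $B(\sigma)=\sum_n|a_n|^2(\log n)^4 n^{-2\sigma}$. Summing the dyadic contributions and using the convergence of the geometric series $\sum_{k\ge 0}2^{-k\delta}$ yields
\begin{equation*}
 \int_{-\infty}^{\infty}\frac{|f''|^2\,d\tau}{\bigl(b^2+(\tau-\Im v)^2\bigr)^{(1+\delta)/2}}\le C_\delta\bigl(b(\sigma)^{-\delta}A(\sigma)+b(\sigma)^{-(1+\delta)}B(\sigma)\bigr).
\end{equation*}

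Finally, the Mellin-type identity $\int_0^\infty\sigma^3 n^{-2\sigma}\,d\sigma=6/(2\log n)^4$ and the trivial bound $\sum_n|a_n|^2/n\le\|f\|_{\mathcal{H}^2}^2$ give
\begin{equation*}
 \int_0^{\theta-1/2}\sigma^3 A(\sigma)\,d\sigma\le \tfrac{3}{8}\|f\|_{\mathcal{H}^2}^2,\qquad \int_0^{\theta-1/2}\sigma^3 B(\sigma)\,d\sigma\le \tfrac{3}{8}\|f\|_{\mathcal{H}^2}^2,
\end{equation*}
and combining with $b(\sigma)^{-\delta},b(\sigma)^{-(1+\delta)}\le (\Re v-\theta)^{-(1+\delta)}$ (absorbing where necessary an auxiliary factor of $\Re v-\theta$ into $C$) finishes the proof. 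The main difficulty is the fusion of the dyadic splitting with the Montgomery--Vaughan correction: one must verify that the off-diagonal term $B(\sigma)$, which is independent of the scale $T$, does not accumulate across dyadic annuli, and that both the diagonal $A$-contribution and the off-diagonal $B$-contribution descend to the same rate $(\Re v-\theta)^{-(1+\delta)}$ once the final $\sigma$-integration is performed.
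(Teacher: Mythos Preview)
Your argument is correct and follows the same skeleton as the paper's proof: control the $t$-integral on each vertical line by a weighted mean-value inequality for Dirichlet series, then integrate in $\sigma$ using $\int_0^\infty \sigma^3 n^{-2\sigma}\,d\sigma = 3/(8(\log n)^4)$. The only difference is that the paper invokes \cite[Lemma~7.2]{Brevig-AdvMath-2021} as a black box for the weighted vertical estimate (after reducing to real $v$ by vertical translation invariance), whereas you reprove that estimate by hand via a dyadic splitting of the Cauchy-type weight combined with Montgomery--Vaughan. Since weighted mean-value lemmas of this sort are typically obtained exactly by such a dyadic-plus-MV argument, your route is really an unpacked version of the paper's; it is more self-contained but longer.

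One point to tighten: your diagonal contribution produces $b(\sigma)^{-\delta}A(\sigma)$, which after the $\sigma$-integration gives a factor $(\Re v-\theta)^{-\delta}$, not $(\Re v-\theta)^{-(1+\delta)}$. The parenthetical ``absorbing an auxiliary factor of $\Re v-\theta$ into $C$'' only works when $\Re v-\theta$ is bounded above, so it makes $C$ depend on $v$ rather than on $\delta$ alone. This is harmless for the unique use of the lemma in the paper (the proof of Theorem~\ref{Dipon-vasu-p1-thm-09}, where $v=\Phi(+\infty)$ is fixed and $\theta<(1/2+\Re v)/2$), but as a standalone statement your method actually delivers the right-hand side $C_\delta\bigl((\Re v-\theta)^{-\delta}+(\Re v-\theta)^{-(1+\delta)}\bigr)\|f\|_{\mathcal{H}^2}^2$, which is equally sufficient for the compactness argument that follows.
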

	\begin{proof}
		Since $1/2<\Re(w)<\theta$ and the $\mathcal{H}^2$ norm remains invariant under vertical translation, we can assume without loss of generality that \( v \) is real. Consider the function
		\begin{equation*}
			w(t) = \frac{1}{|w-v|^{1+\delta}} = \frac{1}{|\sigma+it-v|^{1+\delta}}\le\frac{1}{|\theta+it-v|^{1+\delta}}.
		\end{equation*}
		By applying \cite[Lemma 7.2]{Brevig-AdvMath-2021} with the decreasing function $w(t)$ along the vertical line\\ $1/2<\Re(w)=\sigma<\theta$ to the Dirichlet series $f''(\sigma-1/2+s)$, we obtain
		\begin{equation*}
			\int_{-\infty}^{+\infty}\frac{|f''(\sigma+it)|^2}{|\sigma+it-v|^{1+\delta}}\,dt\le\frac{1}{(\Re(v)-\theta)^{1+\delta}}\sum_{n\ge 1}|a_n|^2\frac{(\log n)^4}{n^{2(\sigma-1/2)}}.
		\end{equation*}
		Thus, we have
		\begin{align*}
			\int_{1/2<\Re(w)<\theta}|f''(w)|^2\frac{(\Re(w)-1/2)^3}{|w-v|^{1+\delta}}\,dw &\le\frac{\sum_{n\ge 1}|a_n|^2}{(\Re(v)-\theta)^{1+\delta}}\int_{1/2}^{\theta}\frac{(\log n)^4}{n^{2(\sigma-1/2)}}(\sigma-1/2)^3\,d\sigma\\&\le\frac{\norm{f}_{\mathcal{H}^2}^2}{(\Re(v)-\theta)^{1+\delta}}\int_{1/2}^{\infty}\frac{(\log n)^4}{n^{2(\sigma-1/2)}}(\sigma-1/2)^3\,d\sigma\\&\le\frac{3}{8}\frac{1}{(\Re(v)-\theta)^{1+\delta}}\norm{f}_{\mathcal{H}^2}^2.
		\end{align*}
		This completes the proof.
\end{proof}
We shall now follow the approach outlined in \cite[Theorem 1.3]{Brevig-AdvMath-2021} to demonstrate that the symbol $\Phi$, which maps $\mathbb{C}_0$ to the half-plane $\mathbb{C}_{\frac{1}{2}}$ is not sufficient to prove the boundedness of the operator $D_\Phi$ in the following example.
 \begin{example}\label{Dipon-vasu-p1-eqn-037}
 	  Let symbol $\Phi :\mathbb{C}_0\to\mathbb{C}_\frac{1}{2}$ be defined as 
 	\begin{equation}\label{Dipon-vasu-p1-eqn-021}
 		\Phi(s)= \frac{(\overline{v}-1)2^{-s}+ 1}{1-2^{-s}},
 	\end{equation}
 	where $v\in\mathbb{C}_\frac{1}{2}$.
 	In view of  \cite[Theorem 1.2]{Brevig-AdvMath-2021} and \cite[Lemma 2.3]{Brevig-AdvMath-2021}, we observe that for sufficiently small $\epsilon>0$, a simple computation based on  \eqref{Dipon-vasu-p1-eqn-005} gives
 	\begin{align}\label{Dipon-vasu-p1-eqn-020}
 		\norm{D_\Phi(f)}_{\mathcal{H}^2}^2&\ge\int_{\mathbb{R}}\int_{\frac{1}{2}}^{\frac{1}{2}+\epsilon}|f''(\sigma+it)|^2\log\abs{\frac{w+\overline{v}-1}{w-v}} d\sigma dt\\&\ge\int_{\mathbb{R}}\int_{\frac{1}{2}}^{\frac{1}{2}+\epsilon}|{f''(\sigma+it)}|^2\frac{2(\sigma- 1/2)(\Re(v)- 1/2)}{|\sigma+ it+ \overline{v}-1|^2}\, d\sigma dt\nonumber \\&\ge\int_{\mathbb{R}}\int_{\frac{1}{2}}^{\frac{1}{2}+\epsilon}|{f''(\sigma+it)}|^2\frac{(\sigma-\frac{1}{2})}{1+t^2}\,d\sigma dt\nonumber\\&\ge\int_{-1}^{1}\int_{\frac{1}{2}}^{\frac{1}{2}+\epsilon}|f'(\frac{1}{2}+\sigma+it)|^2\frac{(\sigma-\frac{1}{2})}{1+t^2}\,d\sigma dt\nonumber\\&\ge\int_{-1}^{1}\int_{\frac{1}{2}}^{\frac{1}{2}+\epsilon}\frac{(\sigma-\frac{1}{2})}{((\sigma-\frac{1}{2})^2+t^2)^2}\,d\sigma dt\nonumber\\& =\int_{-1}^{1}\int_{0}^{\epsilon} \frac{\sigma}{(\sigma^2 + t^2)^2}\, d\sigma dt\nonumber\\&=\infty\nonumber.
 	\end{align}
 \end{example}

In other words, for a particular symbol $\Phi$ defined by \eqref{Dipon-vasu-p1-eqn-021}, the operator $D_\Phi$ is unbounded on $\mathcal{H}^2$. Additionally, we note that condition \eqref{Dipon-vasu-p1-eqn-040} is not sufficient for proving the operator \(D_\Phi\) is bounded on \(\mathcal{H}^2\). To get rid of the operator's unboundedness, we shall now present a more generalized result.

\begin{thm}\label{Dipon-vasu-p1-thm-04}
	If the symbol $\Phi\in\mathcal{G}_0$ extends holomorphically to a mapping $\mathbb{C}_0\to\mathbb{C}_{\frac{1}{2}+ \epsilon}$, where $\epsilon> 0$, then the operator $D_\Phi$ defines a bounded composition-differentiation operator on $\mathcal{H}^2$. 
\end{thm}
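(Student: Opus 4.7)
The plan is to bypass the change-of-variable formula \eqref{Dipon-vasu-p1-eqn-005} entirely and instead prove that $D_\Phi f$ actually belongs to the smaller space $\mathcal{H}^\infty$ with $\mathcal{H}^\infty$-norm dominated by $\|f\|_{\mathcal{H}^2}$, and then invoke the continuous embedding $\mathcal{H}^\infty \hookrightarrow \mathcal{H}^2$. The $\epsilon$-gap in the hypothesis $\Phi(\mathbb{C}_0)\subseteq\mathbb{C}_{\frac{1}{2}+\epsilon}$ is precisely what is needed to keep $f'$ uniformly bounded on the image of $\Phi$; this is the mechanism that is missing in Example~\ref{Dipon-vasu-p1-eqn-037}, where $\Phi$ is allowed to approach the boundary $\Re(w)=\tfrac{1}{2}$ and the integral in \eqref{Dipon-vasu-p1-eqn-005} diverges.

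First I would establish a uniform pointwise estimate for $f'$ on $\mathbb{C}_{\frac{1}{2}+\epsilon}$. Writing $f(s)=\sum_{n=1}^{\infty} a_n n^{-s}$ and differentiating term by term gives $f'(s)=-\sum_{n=2}^{\infty} a_n (\log n)\, n^{-s}$, and for any $s$ with $\Re(s)\ge \tfrac{1}{2}+\epsilon$ the Cauchy--Schwarz inequality yields
\begin{equation*}
|f'(s)|^2 \;\le\; \Bigl(\sum_{n=2}^{\infty}|a_n|^2\Bigr)\Bigl(\sum_{n=2}^{\infty}(\log n)^2 n^{-2\Re(s)}\Bigr) \;\le\; C_\epsilon^{\,2}\, \|f\|_{\mathcal{H}^2}^2,
\end{equation*}
where $C_\epsilon^{\,2}:=\sum_{n=2}^{\infty}(\log n)^2 n^{-(1+2\epsilon)}<\infty$ by a routine integral test (this is where the positivity of $\epsilon$ is essential). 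By hypothesis every value $\Phi(s)$ with $s\in\mathbb{C}_0$ lies in $\mathbb{C}_{\frac{1}{2}+\epsilon}$, so substituting $\Phi(s)$ for $s$ gives $|f'(\Phi(s))|\le C_\epsilon\|f\|_{\mathcal{H}^2}$ for every $s\in\mathbb{C}_0$. Hence $f'\circ\Phi$ is a holomorphic function on $\mathbb{C}_0$ with $\|f'\circ\Phi\|_{\mathbb{H}^\infty(\mathbb{C}_0)}\le C_\epsilon\|f\|_{\mathcal{H}^2}$.

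To conclude, I would appeal to Theorem~\ref{Dipon-vasu-p1-thm-01} to verify that $f'\circ\Phi\in\mathcal{D}$: since $\Phi\in\mathcal{G}_0$ has the admissible form \eqref{Dipon-vasu-p1-eqn-022} with $c_0=0$, the composition-differentiation operator sends $\mathcal{D}$ into itself, and $\mathcal{H}^2\subset\mathcal{D}$. Therefore $D_\Phi f\in\mathcal{H}^\infty=\mathbb{H}^\infty(\mathbb{C}_0)\cap\mathcal{D}$ with $\|D_\Phi f\|_{\mathcal{H}^\infty}\le C_\epsilon\|f\|_{\mathcal{H}^2}$. The standard inclusion $\mathcal{H}^\infty\subseteq\mathcal{H}^2$ with $\|g\|_{\mathcal{H}^2}\le \|g\|_{\mathcal{H}^\infty}$, which follows from Bohr's correspondence via the norm-decreasing embedding $H^\infty(\mathbb{T}^\infty)\hookrightarrow L^2(\mathbb{T}^\infty)$, then yields $\|D_\Phi f\|_{\mathcal{H}^2}\le C_\epsilon\|f\|_{\mathcal{H}^2}$, proving that $D_\Phi$ is a bounded operator on $\mathcal{H}^2$ with norm at most $C_\epsilon$. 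The only subtle point is the Dirichlet-series membership of $f'\circ\Phi$, which is absorbed into the structure theorem~\ref{Dipon-vasu-p1-thm-01}; beyond that, the proof needs nothing more than the elementary Cauchy--Schwarz estimate, once the correct target space $\mathcal{H}^\infty$ is identified.
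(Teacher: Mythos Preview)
Your argument is correct and substantially simpler than the paper's. The paper follows the Gordon--Hedenmalm template: it transplants the classical disk bound of Fatehi--Hammond (inequality \eqref{Dipon-vasu-p1-eqn-007}) to the half-plane via the conformal maps $\Psi_\xi(s)=(s-\xi)/(s+\xi)$ and a translation $T_{\frac12+\epsilon}$, then invokes the local embedding \eqref{Dipon-vasu-p1-eqn-009} of Hedenmalm--Lindqvist--Seip together with Carlson's theorem, and finishes with a double limit $\xi\to\infty$, $\sigma\to0$. Your proof sidesteps all of this machinery: the Cauchy--Schwarz estimate on $f'$, combined with the $\epsilon$-gap hypothesis, lands $D_\Phi f$ directly in $\mathcal{H}^\infty$, after which the contractive inclusion $\mathcal{H}^\infty\hookrightarrow\mathcal{H}^2$ (via Bohr's lift to $\mathbb{T}^\infty$) closes the argument. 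In fact the paper essentially rediscovers your Cauchy--Schwarz bound later, inside the proof of Theorem~\ref{Dipon-vasu-p1-thm-02} (see \eqref{Dipon-vasu-p1-eqn-014}), but does not observe that it already suffices for Theorem~\ref{Dipon-vasu-p1-thm-04}. The advantage of the paper's longer route is structural---it parallels the boundedness proof for $C_\Phi$ and keeps the connection to the disk theory visible---whereas your route is more self-contained and yields the explicit operator bound $\norm{D_\Phi}\le\bigl(\sum_{n\ge2}(\log n)^2 n^{-1-2\epsilon}\bigr)^{1/2}$ for free.
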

     In a broader context, we can demonstrate Theorem \ref{Dipon-vasu-p1-thm-04} in the case of positive characteristics. This theorem is essential for our investigation into further properties such as spectrum and self-adjointness. We restrict our symbol to a specific form because we are particularly interested in the case of zero characteristic. That is, we take  $\Phi(s)= c_1+ c_22^{-s}$ with $c_2\ne 0$. This leads to another crucial requirement: \(\Re(c_1) > \frac{1}{2}+|c_2|\) based on Theorem \ref{Dipon-vasu-p1-thm-04} and \cite[Corollary 3]{Bayart-IJ-2003}. Now, we shall revisit two important lemmas on Dirichlet series from \cite[Section 3]{muthukumar-IEOT-2018}.
	
\begin{lem}\label{Dipon-vasu-p1-lem-001}\cite{muthukumar-IEOT-2018}
	Let $s>1$. Then we have
	\begin{equation*}
		\frac{1}{s-1}\le\zeta(s)\le\frac{s}{s-1}.
	\end{equation*}
\end{lem}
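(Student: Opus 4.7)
The plan is to prove both bounds by the standard integral comparison test applied to the decreasing function $f(x) = x^{-s}$ on $[1,\infty)$ with $s>1$. Since $x^{-s}$ is strictly decreasing, on each interval $[n,n+1]$ one has the two-sided bound $(n+1)^{-s} \le x^{-s} \le n^{-s}$, which upon integration yields the two elementary inequalities driving the proof.

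For the lower bound, I would partition $[1,\infty)$ as $\bigcup_{n\ge 1}[n,n+1]$ and use $\int_{n}^{n+1} x^{-s}\,dx \le n^{-s}$. Summing over $n \ge 1$ gives
\begin{equation*}
\frac{1}{s-1} = \int_{1}^{\infty} x^{-s}\,dx = \sum_{n=1}^{\infty}\int_{n}^{n+1} x^{-s}\,dx \le \sum_{n=1}^{\infty} n^{-s} = \zeta(s),
\end{equation*}
which gives the required left-hand inequality.

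For the upper bound, I would isolate the $n=1$ term and use the reversed comparison $n^{-s} \le \int_{n-1}^{n} x^{-s}\,dx$ valid for $n \ge 2$. Summing yields
\begin{equation*}
\zeta(s) = 1 + \sum_{n=2}^{\infty} n^{-s} \le 1 + \int_{1}^{\infty} x^{-s}\,dx = 1 + \frac{1}{s-1} = \frac{s}{s-1}.
\end{equation*}

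There is no genuine obstacle here: the argument is a textbook integral test and the convergence of the improper integral $\int_{1}^{\infty} x^{-s}\,dx = 1/(s-1)$ for $s>1$ is all that is needed. The only care required is to isolate the first term on the upper estimate so as to use a valid integral bound for $n \ge 2$, since the inequality $n^{-s} \le \int_{n-1}^{n} x^{-s}\,dx$ fails to make sense at $n=1$.
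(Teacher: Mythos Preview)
Your argument is correct: the integral comparison test applied to the decreasing function $x\mapsto x^{-s}$ yields both inequalities exactly as you wrote. The paper itself does not give a proof of this lemma but simply cites it from \cite{muthukumar-IEOT-2018}, so there is nothing to compare; your proof is the standard textbook derivation and is entirely appropriate here.
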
 
\begin{lem}\label{Dipon-vasu-p1-lem-002}\cite{muthukumar-IEOT-2018}
	If $s>1$ and $k\ge1$ is an integer, and $f(x)={(\log x)^k}/{x^s}$, then we have 
	\begin{equation*}
		\sum_{n=1}^{\infty}f(n)\le\frac{k!}{(s-1)^k}\zeta(s).
	\end{equation*}

\end{lem}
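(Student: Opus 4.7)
The plan is to prove the bound $\sum_{n \ge 1} (\log n)^k/n^s \le k!\,\zeta(s)/(s-1)^k$ by induction on $k \ge 1$, leveraging Lemma \ref{Dipon-vasu-p1-lem-001} at the end to convert integral bounds into the $\zeta(s)$ factor on the right-hand side.

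For the base case $k=1$, I would begin with the elementary Riemann-sum inequality $\log n = \int_1^n dt/t \le \sum_{m=1}^{n-1} 1/m = H_{n-1}$, which follows since $1/t \le 1/m$ on $[m,m+1]$. Substituting and swapping the order of summation by Tonelli (all terms are non-negative) gives
\[
\sum_{n \ge 1} \frac{\log n}{n^s} \;\le\; \sum_{m \ge 1} \frac{1}{m} \sum_{n > m} \frac{1}{n^s}.
\]
Applying the standard tail estimate $\sum_{n > m} n^{-s} \le \int_m^\infty x^{-s}\,dx = 1/((s-1)m^{s-1})$ then yields $\sum_n (\log n)/n^s \le \zeta(s)/(s-1)$, which is the $k=1$ case.

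For the inductive step, I would use the identity $(\log n)^k = k \int_1^n (\log t)^{k-1}\,dt/t$ and swap the integral with the sum over $n$:
\[
\sum_{n \ge 1} \frac{(\log n)^k}{n^s} \;=\; k \int_1^\infty \frac{(\log t)^{k-1}}{t} \sum_{n \ge \lceil t \rceil} \frac{1}{n^s}\,dt.
\]
One can then bound the inner tail by $1/((s-1)(\lceil t \rceil -1)^{s-1}) \le 1/((s-1)(t-1)^{s-1})$ on $[2,\infty)$ (with a separate trivial bound on $[1,2]$ that contributes only the harmless factor $(\zeta(s)-1)(\log 2)^k$). After the substitution $t = 1 + u$, the resulting integral $\int_0^\infty (\log(1+u))^{k-1}/((1+u)u^{s-1})\,du$ is reduced via $\log(1+u) \le u$ and the exact identity $\int_1^\infty (\log x)^{k-1}/x^s\,dx = (k-1)!/(s-1)^k$, together with the inductive hypothesis for $k-1$, to extract the factor $k/(s-1)$ against $F_{k-1}(s)$. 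Telescoping yields $F_k(s) \le (k!/(s-1)^k)\cdot\zeta(s)$ after invoking $\zeta(s) \ge 1/(s-1)$ from Lemma \ref{Dipon-vasu-p1-lem-001} to absorb the residual $(s-1)^{-1}$ factor into $\zeta(s)$.

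The main obstacle is sharpness in the inductive step: naive integral-sum comparisons (for instance bounding $\log(1+u) \le \log u + \log 2$ on $[1,\infty)$) tend to introduce multiplicative factors like $2^{k-1}$ that prevent recovering the exact constant $k!$. To avoid this one must match the iterated integral representation $(\log n)^k = k! \int_{1<t_1<\cdots<t_k<n} \prod dt_i/t_i$ against the closed-form integral $\int_1^\infty (\log t)^k/t^s\,dt = k!/(s-1)^{k+1}$, carefully controlling the sum-versus-integral defect using the monotonicity of $(\log t)^k/t$ for $t \ge e^{k/s}$ and the trivial contribution on the complementary compact region. As a cross-check, the factor $k!$ emerges naturally because integration by parts in $\int_1^\infty (\log t)^k/t^s\,dt$ produces a recursion $I_k = (k/(s-1))I_{k-1}$, precisely mirroring the desired inequality $F_k(s) \le (k/(s-1))\cdot k\cdot \ldots\cdot \zeta(s)/(s-1)^{k-1}$ in the limit $s \to 1^+$, where both sides have the same leading singular behavior $k!/(s-1)^{k+1}$.
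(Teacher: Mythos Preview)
The paper does not prove this lemma; it is quoted verbatim from Muthukumar--Ponnusamy--Queff\'elec \cite{muthukumar-IEOT-2018}, so there is no in-paper argument to compare against.

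Your base case $k=1$ is correct and clean: the chain $\log n\le H_{n-1}$, swap of sums, and the tail bound $\sum_{n>m}n^{-s}\le m^{1-s}/(s-1)$ combine exactly to give $F_1(s)\le\zeta(s)/(s-1)$.

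The inductive step, however, has a genuine gap that is not merely cosmetic. After your swap one has
\[
F_k(s)\;=\;k\int_1^\infty\frac{(\log t)^{k-1}}{t}\Bigl(\sum_{n\ge\lceil t\rceil}n^{-s}\Bigr)\,dt,
\]
but no choice of tail bound on the inner sum brings this back to a constant multiple of $F_{k-1}(s)$. In fact the natural target of such an induction, namely the inequality $(s-1)F_k(s)\le kF_{k-1}(s)$, is \emph{false}: at $s=2$, $k=2$ one has $F_2(2)=\zeta''(2)\approx 1.989$ while $2F_1(2)=-2\zeta'(2)\approx 1.875$. So any scheme that peels off exactly one factor of $k/(s-1)$ and lands on $F_{k-1}(s)$ is doomed; the lemma holds only because the slack accumulates in $\zeta(s)$, not term-by-term. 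Your own paragraph on ``the main obstacle'' correctly diagnoses that the naive bounds introduce spurious constants, but the remedies you list (iterated-integral representation, controlling the sum-versus-integral defect on a compact region) remain at the level of heuristics and do not address the numerical obstruction above. The iterated bound $(\log n)^k/k!\le\sum_{1\le m_1\le\cdots\le m_k\le n-1}\prod m_i^{-1}$ (the natural generalisation of $\log n\le H_{n-1}$) runs into the same problem: the non-strict inequalities $m_i\le m_{i+1}$ generate diagonal terms that do not telescope. As written, then, the proposal proves the case $k=1$ but not $k\ge 2$.
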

The bounds for the norm of the composition-differentiation operator on $\mathcal{H}^2$ are established in the following theorem.

\begin{thm}\label{Dipon-vasu-p1-thm-02}
	For a special symbol $\Phi(s)=c_1+c_2 2^{-s}$ with $c_2\ne 0$ and $\Re(c_1)> 1/2+ |c_2|$ induces a bounded composition-differentiation operator on $\mathcal{H}^2$ that satisfies the following inequality
	\begin{equation}\label{Dipon-vasu-p1-eqn-002}
		 \frac{2}{(2\Re(c_1)-1)^3}\le{\norm{D_\Phi}}^2_{\mathcal{H}^2}\le\frac{2}{(2\Re(c_1)-1)^2}\zeta(2\Re(c_1)).
	\end{equation}
\end{thm}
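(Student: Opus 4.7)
The plan is to exploit the fact that, for $\Phi(s)=c_1+c_2 2^{-s}$, the range of $D_\Phi$ lies in the closed span of $\{2^{-ks}\}_{k\ge 0}$, which reduces the $\mathcal{H}^2$-norm of $D_\Phi f$ to an $\ell^2$-sum over $k$. Expanding
\[
n^{-\Phi(s)}\;=\;n^{-c_1}e^{-c_2(\log n)\,2^{-s}}\;=\;\sum_{k\ge 0}\frac{(-c_2\log n)^k}{k!}\,n^{-c_1}\,2^{-ks}
\]
and applying $D_\Phi$ to $f=\sum_n a_n n^{-s}$ yields $(D_\Phi f)(s)=\sum_{k\ge 0}A_k\,2^{-ks}$ with $A_k=\frac{c_2^k}{k!}\,f^{(k+1)}(c_1)$, so
\[
\|D_\Phi f\|_{\mathcal{H}^2}^2 \;=\; \sum_{k\ge 0}\frac{|c_2|^{2k}}{(k!)^2}\bigl|f^{(k+1)}(c_1)\bigr|^2.
\]
This identity is the starting point for both bounds.

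For the upper bound, I would estimate each $|f^{(k+1)}(c_1)|^2$ by Cauchy-Schwarz against the reproducing kernel $k_{c_1}^{(k+1)}$ for $(k+1)$-st derivative evaluation at $c_1$, giving $|f^{(k+1)}(c_1)|^2\le\|f\|^2\sum_{n}(\log n)^{2k+2}n^{-2\Re(c_1)}$. Lemma \ref{Dipon-vasu-p1-lem-002} then bounds the latter sum by $\|f\|^2\,\frac{(2k+2)!}{(2\Re(c_1)-1)^{2k+2}}\zeta(2\Re(c_1))$. Summing over $k$ produces the prefactor $\frac{\zeta(2\Re(c_1))}{(2\Re(c_1)-1)^2}$ multiplied by a power series in $u=|c_2|^2/(2\Re(c_1)-1)^2$; the hypothesis $\Re(c_1)>\tfrac12+|c_2|$ forces $u<1/4$, so the series converges, and the remaining work is to identify this combinatorial sum and extract the clean constant $2$ claimed in the statement.

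For the lower bound, I would test the Hilbert adjoint of $D_\Phi$ against the reproducing kernels $k_a$ of $\mathcal{H}^2$. A direct inner-product computation yields $D_\Phi^* k_a=k_{\Phi(a)}^{(1)}$, where $k_w^{(1)}$ is the reproducing kernel for $f\mapsto f'(w)$, so that
\[
\|D_\Phi\|^2\;\ge\;\frac{\|D_\Phi^* k_a\|^2}{\|k_a\|^2}\;=\;\frac{\zeta''(2\Re(\Phi(a)))}{\zeta(2\Re(a))}.
\]
Letting $\Re(a)\to+\infty$ forces $\Phi(a)\to c_1$ and $\zeta(2\Re(a))\to 1$, hence $\|D_\Phi\|^2\ge\zeta''(2\Re(c_1))=\sum_{n\ge 2}(\log n)^2 n^{-2\Re(c_1)}$. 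A final integral-sum comparison against $\int_1^\infty(\log x)^2 x^{-2\Re(c_1)}\,dx=\frac{2}{(2\Re(c_1)-1)^3}$ then closes the argument. The principal technical point I expect is the combinatorial identification yielding the constant $2$ in the upper bound---since a termwise Cauchy-Schwarz does not \emph{a priori} telescope to the stated form---while the integral-sum comparison in the lower bound requires some care because the integrand $(\log x)^2 x^{-s}$ is not monotone throughout $[1,\infty)$.
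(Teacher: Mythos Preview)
Your lower-bound argument is essentially the paper's: both compute $D_\Phi^{\,*}k_a=k_{\Phi(a)}^{(1)}$, send $\Re(a)\to+\infty$ so that the denominator $\zeta(2\Re(a))\to1$ and the numerator becomes $\sum_n(\log n)^2n^{-2\Re(c_1)}$, and then compare that sum to $\int_1^\infty(\log x)^2x^{-2\Re(c_1)}\,dx=\tfrac{2}{(2\Re(c_1)-1)^3}$. Your caveat about monotonicity is well taken; the paper in fact asserts that $(\log x)^2/x^{2\Re(c_1)}$ is decreasing on $(1,\infty)$, which is false near $x=1$, so your extra care there is warranted.

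Your upper-bound route, however, has a genuine gap that you yourself flag but cannot close. The combinatorial sum you arrive at,
\[
\sum_{k\ge0}\frac{|c_2|^{2k}}{(k!)^2}\,\frac{(2k+2)!}{(2\Re(c_1)-1)^{2k+2}}
\;=\;\frac{1}{(2\Re(c_1)-1)^{2}}\sum_{k\ge0}\binom{2k+2}{k+1}(k+1)^2u^k,
\qquad u=\frac{|c_2|^2}{(2\Re(c_1)-1)^2},
\]
does \emph{not} collapse to $2/(2\Re(c_1)-1)^2$: the generating-function identity gives
\[
\sum_{k\ge0}\binom{2k+2}{k+1}(k+1)^2u^k\;=\;\frac{2(1+2u)}{(1-4u)^{5/2}},
\]
which strictly exceeds $2$ whenever $c_2\neq0$. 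Termwise Cauchy--Schwarz on each $f^{(k+1)}(c_1)$ is lossy here, and no subsequent ``identification'' will recover the stated constant; your method produces only a bound of the form $\frac{2(1+2u)}{(1-4u)^{5/2}}\cdot\frac{\zeta(2\Re(c_1))}{(2\Re(c_1)-1)^2}$.

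The paper's upper-bound argument bypasses the expansion in $\{2^{-ks}\}$ altogether. It applies a single Cauchy--Schwarz directly to $f'(\Phi(s))=-\sum_n a_n(\log n)\,n^{-\Phi(s)}$, obtaining a bound involving $\sum_n(\log n)^2 n^{-2\Re\Phi(s)}$, and then invokes Lemma~\ref{Dipon-vasu-p1-lem-002} \emph{once}, with $k=2$, to produce $\frac{2}{(2\Re\Phi(s)-1)^2}\zeta(2\Re\Phi(s))$; sending $\sigma\to\infty$ replaces $\Re\Phi(s)$ by $\Re(c_1)$. Thus the constant $2$ is the $2!$ from a single application of Lemma~\ref{Dipon-vasu-p1-lem-002}, not the sum of a series. (The paper's passage from this pointwise estimate to a norm estimate is terse, but that is the route it takes.) If you want to salvage your expansion approach, you would need an estimate on the coefficients $A_k$ that does not decouple the $k$'s; as written, your plan cannot reach the stated upper bound.
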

\begin{rem}
For the constant function $\Phi(s)= c_1$, the lower bound of \eqref{Dipon-vasu-p1-eqn-002} is sharp. However, the sharpness of the upper bound in \eqref{Dipon-vasu-p1-eqn-002} has not yet been determined.  Brevig \cite{Brevig-BLMS-2017} initially investigated the sharpness of the constraints for the composition operator analogue. 
\end{rem}

	\begin{pf}[\bf{Proof of Theorem \ref{Dipon-vasu-p1-thm-09}}]
		Let the operator $D_\Phi$ be compact. Our goal is to prove the condition \eqref{Dipon-vasu-p1-eqn-039}. Consider an arbitrary sequence $\{w_j\}_{j\ge 1}$ in the half-plane $\mathbb{C}_{\frac{1}{2}}$ such that $\Re(w_j)$ converges to  $1/2^+$ as $j\to\infty$. Without loss of generality, we can assume that
			\begin{equation}\label{Dipon-vasu-p1-eqn-041}
				\Re(w_j)\le\frac{1/2 + \Re(\Phi(\infty))}{2}.
			\end{equation}
			The aforementioned inequality \eqref{Dipon-vasu-p1-eqn-041} ensures that $\Phi(+\infty)$ is uniformly bounded on the open disk $\mathbb{D}(w_j,r_j)\subset\mathbb{C}_{\frac{1}{2}}\setminus \{\Phi(+\infty)\}$, where $r_j = {(\Re(w_j)-1/2)}/{2}$. From Lemma \ref{Dipon-vasu-p1-lem-004}, we have
			\begin{equation}\label{Dipon-vasu-p1-eqn-042}
				\frac{M_\Phi(w_j)}{(\Re(w_j)-1/2)^3}\le\frac{1}{8\pi r_j^5}\int_{\mathbb{D}(w_j,r_j)}M_\Phi(s)ds.
			\end{equation}
			For $w\in\mathbb{C}_{\frac{1}{2}}$, consider the function
			$$k_w(s) = \frac{\zeta(s+\overline{w})}{\sqrt{\zeta(2\Re(w))}},$$ which serves as a normalized reproducing kernel Hilbert space on $\mathcal{H}^2$. Clearly, the sequence $\{k_{w_j}\}_{j\ge 1}$ converges weakly to $0$. By the compactness of the operator $D_\Phi$, we have  $D_\Phi(K_{w_j})\to 0$ as $j\to\infty$ on $\mathcal{H}^2$. Furthermore, by the virtue of Lemma \ref{Dipon-vasu-p1-lem-005}, there is a constant $C>0$ such that 
			\begin{equation*}
				|k''_{w_j}(s)|^2\ge\frac{C}{r^5_j},
			\end{equation*}
			for every $s\in\mathbb{D}(w_j,r_j)$. Using the estimate \eqref{Dipon-vasu-p1-eqn-042} and extending the integral to the half-plane $\mathbb{C}_{\frac{1}{2}}$, we obtain from the equation \eqref{Dipon-vasu-p1-eqn-005} that
			\begin{align*}
				\frac{M_\Phi(w_j)}{(\Re(w_j)-1/2)^3} &\le\frac{1}{8\pi r_j^5}\int_{\mathbb{C}_{\frac{1}{2}}}M_\Phi(s)\,ds\\&\le\frac{1}{8\pi C}\int_{\mathbb{C}_{\frac{1}{2}}}|k''_{w_j}(s)|^2M_\Phi(s)\,ds\\&\le\frac{1}{16C}\norm{D_\Phi(k_{w_j})}^2_{\mathcal{H}^2}\to 0\,\,\,\mbox{as}\,\, j\to\infty.
			\end{align*}
			Since $\{w_j\}_{j\ge 1}$ is an arbitrary sequence in $\mathbb{C}_{\frac{1}{2}}$, we conclude that 
			\begin{equation*}
				\lim_{\Re(w)\to\frac{1}{2}^+} \frac{M_\Phi(w)}{(\Re(w) - \frac{1}{2})^3} = 0.
			\end{equation*}
			Conversely, suppose the condition \eqref{Dipon-vasu-p1-eqn-039} holds. By Lemma \ref{Dipon-vasu-p1-lem-006}, for every $\epsilon>0$, we have 
			\begin{equation*}
				M_\Phi(w)\le\epsilon^2\frac{(\Re(w)-1/2)^3}{|w-\Phi(+\infty)|^{1+\delta}}.
			\end{equation*}
			We have to prove that $D_\Phi$ is compact on $\mathcal{H}^2$. Let $\{f_n\}_{n\ge 1}$ be a sequence in $\mathcal{H}^2$ that converges weakly to $0$. Then we need to show $\norm{D_\Phi(f_n)}^2_{\mathcal{H}^2}\to 0$ as $n\to\infty$. From the change of variable formula \eqref{Dipon-vasu-p1-eqn-005} and for $\theta>1/2$, we have 
			\begin{align*}
				\norm{D_\Phi(f)}_{\mathcal{H}^2}^2&= \norm{f'\circ\Phi}_{\mathcal{H}^2}^2=|f'(\Phi(\infty))|^2+ \frac{2}{\pi}\int_{\mathbb{C}_\frac{1}{2}}|f''(w)|^2M_\Phi(w)\,dA(w)\\& =|f'(\Phi(\infty))|^2+ \frac{2}{\pi}\bigg(\int_{\Re(w)\ge\theta}|f''(w)|^2M_\Phi(w)\,dA(w) + \int_{1/2<\Re(w)<\theta}|f''(w)|^2M_\Phi(w)\,dA(w)\bigg).
			\end{align*} 
			In view of Lemma \ref{Dipon-vasu-p1-lem-007}, we have
			\begin{equation}\label{Dipon-vasu-p1-eqn-043}
				\lim_{n\to\infty}\bigg(|f'_n(\Phi(\infty))|^2+\frac{2}{\pi}\int_{\Re(w)\ge\theta}|f''_n(w)|^2M_\Phi(w)\, dA(w)\bigg) = 0.
			\end{equation}
			Also by using Lemma \ref{Dipon-vasu-p1-lem-006} and Lemma \ref{Dipon-vasu-p1-lem-008} with $v=\Phi(+\infty)$ and $1/2<\theta\le(1/2+\Re(\Phi(+\infty)))/2<\Re(v)$, we obtain
			\begin{align}\label{Dipon-vasu-p1-eqn-044}
				\int_{1/2<\Re(w)<\theta}|f_n''(w)|^2M_\Phi(w)\,dAw &\le\epsilon^2\int_{1/2<\Re(w)<\theta}|f_n''(w)|^2\frac{(\Re(w)-1/2)^3}{|w-\Phi(+\infty)|^{1+\delta}}\,dAw\\&\le\epsilon^2\norm{f_n}^2_{\mathcal{H}^2}\nonumber.
			\end{align}
			By combining \eqref{Dipon-vasu-p1-eqn-043} and  \eqref{Dipon-vasu-p1-eqn-044}, we obtain
			\begin{equation*}
				\lim_{n\to\infty}\bigg(|f'_n(\Phi(\infty))|^2+\frac{2}{\pi}\int_{\mathbb{C}_{\frac{1}{2}}}|f''_n(w)|^2M_\Phi(w)\,dA(w)\bigg) = 0.
			\end{equation*}
			Finally, by the change of variable formula \eqref{Dipon-vasu-p1-eqn-005}, we conclude that $\norm{D_\Phi(f_n)}^2_{\mathcal{H}^2}\to 0$ as $n\to\infty$. Hence, the operator $D_\Phi$ is compact on $\mathcal{H}^2$. This completes the proof.
	\end{pf}\\
By employing a method similar to that in \cite[Theorem 1.3]{Athanosios-Revista-2024} and considering the compactness of \( D_\Phi \), we can derive a geometric condition on the symbol \( \Phi \) that induces a bounded composition-differentiation operator on \( \mathcal{H}^2 \).
	\begin{example}
		Assume the symbol $\Phi\in\mathcal{G}_0$. If the range of the half-plane $\mathbb{C}_0$ is contained in the angular sector $\{s\in\mathbb{C}_{\frac{1}{2}}: |\arg(s-1/2)|<\pi/2\alpha\}$, for some $\alpha>1$, then the operator $D_\Phi$ is compact. Moreover, if for some $\delta>0$ such that  $\Phi(\mathbb{C}_0)\cap\{1/2<\Re(s)<1/2+\delta\}$ is contained in a finite union of angular sectors, then the operator $D_\Phi$ is compact on $\mathcal{H}^2$. The key part of this proof of such condition involves Green's function  \cite[Section 4]{Athanosios-Revista-2024} of the potential range of the half-plane.
	\end{example}
	As discussed previously, the proof of Theorem \ref{Dipon-vasu-p1-thm-010} is completely analogous to the proof of the necessary part in Theorem \ref{Dipon-vasu-p1-thm-09}. Now we shall prove the mapping condition of the symbol $\Phi$ that induces a bounded composition-differentiation operator on $\mathcal{H}^2$.

\begin{pf}[\bf{Proof of Theorem \ref{Dipon-vasu-p1-thm-04}}]
	In the classical case, Mahsa Fatehi and Christopher Hammond \cite{fatehi-PAMS-2020} proved that, if $\Phi$ is a non-constant holomorphic self-map of $\mathbb{D}$ with $\norm{\Phi}_\infty< 1$, then the following inequality holds:
	\begin{equation}\label{Dipon-vasu-p1-eqn-007}
		\sqrt{\frac{1+|\Phi(0)|^2}{1-|\Phi(0)|^2}}\le\norm{D_\Phi}_{\mathbb{H}^2(\mathbb{D})}\le\sqrt{\frac{r+|\Phi(0)|^2}{r-|\Phi(0)|^2}}\left\lfloor{\frac{1}{1- r}}\right\rfloor r^{\lfloor{1/(1-r)}\rfloor -1},
	\end{equation}
	for any symbol $\Phi$ with $\norm{\Phi}_\infty\le r< 1$.
	When $\norm{\Phi}_\infty\le 1/2$, then without loss of generality, we may take $r= 1/2$. Thus, \eqref{Dipon-vasu-p1-eqn-007} simplifies to
	\begin{equation}
		\sqrt{\frac{1+|\Phi(0)|^2}{1-|\Phi(0)|^2}}\le\norm{D_\Phi}_{\mathbb{H}^2(\mathbb{D})}\le\sqrt{\frac{1+2|\Phi(0)|^2}{1-2|\Phi(0)|^2}}.
	\end{equation}
	More precisely, we have
	\begin{equation}\label{Dipon-vasu-p1-eqn-008}
		\norm{D_\Phi(f)}^2_{\mathbb{H}^2(\mathbb{D})}= \norm{f'\circ\Phi}^2_{\mathbb{H}^2(\mathbb{D})}\le \sqrt{\frac{1+2|\Phi(0)|^2}{1-2|\Phi(0)|^2}} \norm{f}^2_{\mathbb{H}^2(\mathbb{D})},\,\,\mbox{for every}\,\,f\in\mathbb{H}^2(\mathbb{D}).
	\end{equation}
	Clearly if $\norm{\Phi}_{\infty}\le 1/2$ and $\Phi(0)= 0$, then $\norm{D_\Phi}_{\mathbb{H}^2(\mathbb{D})}= 1$. We shall employ a  quite similar method from \cite[Section 7]{Gordon-Michigan-1999} to prove this theorem. \vspace{2mm}\\\vspace{0.5mm}
	\hspace{0.5mm} Assume that $\Phi(s)= \sum_{n=1}^{\infty}b_nn^{-s}$ with $\Re(b_1)> 1/2$.
	Now let $\xi$ and $\eta$ be two positive real numbers. Consider a natural conformal mapping $\Psi_\xi: \mathbb{C}_0\to\mathbb{D}$ defined by 
	\begin{equation*}
		\Psi_\xi(s)= \frac{s-\xi}{s+\xi},
	\end{equation*} which satisfies $\Psi_\xi(\xi)= 0$. We denote $H^2_i(\mathbb{C}_0,\xi)$ is the image of the classical Hardy space $\mathbb{H}^2(\mathbb{D})$ under the transformation $\Psi_\xi(s)$. Thus, we have
	\begin{equation}\label{Dipon-vasu-p1-eqn-018}
		H^2_i(\mathbb{C}_0,\xi)= \{f \in H(\mathbb{C}_0): f\circ{\Psi_\xi}^{-1}\in\mathbb{H}^2(\mathbb{D})\}.
	\end{equation}
Note that, $H^2_i(\mathbb{C}_0,\xi)$ aligns with the space $H^2_i(\mathbb{C}_0)$ due to \cite[Section 4.2]{Hedenmalm-Duke-1997}, means it is independent of $\xi$. So here, we can replace $\eta$ with $\xi$ and vice-versa. In general, $H^2_i(\mathbb{C}_0)$ is defined by the class of functions $f$ that are holomorphic on $\mathbb{C}_0$ which satisfy $f \circ \phi \in \mathbb{H}^2(\mathbb{D})$, where $\phi$ is the usual Cayley transform.
In this context, the norm in the space is defined by 
\begin{equation*}
	\norm{f}_{H^2_i(\mathbb{C}_0,\xi)}= \norm{f\circ{\Psi_\xi}^{-1}}_{\mathbb{H}^2(\mathbb{D})}.
\end{equation*}
We can also express the norm as 
\begin{equation*}
	\norm{f}^2_{H^2_i(\mathbb{C}_0,\xi)}= \int_\mathbb{R} |f(it)|^2 d\Lambda_\xi(t),
\end{equation*}
where $\Lambda_\xi(t)$ represents the image of normalized arc length measure on the circle under the following transformation
\begin{equation*}
	d\Lambda_\xi(t)= \frac{\xi}{\pi}\frac{dt}{(t^2+\xi^2)}.
\end{equation*}
We have seen that the space $\mathbb{H}^2(\mathbb{D})$ acts as a subspace of $L^2(\mathbb{T})$. Similarly, the space $H^2_i(\mathbb{C}_0)$ is sometimes regarded as a subspace of $L^2(i\mathbb{R},\Lambda_\xi^*)$, the space of functions $g$ on $i\mathbb{R}$ such that $g(it)$ belongs to $L^2(\mathbb{R},\Lambda_\xi).$ 
\vspace{2mm}\\\vspace{0.5mm}
\hspace{0.5mm} Here we consider the case of characteristic $0$, that is, $c_0= 0$. From the assumption, $\Phi$ maps $\mathbb{C}_0$ to $\mathbb{C}_{\frac{1}{2}+ \epsilon}$, where $\epsilon>0$. To prove the boundedness of the operator $D_\Phi$ on $\mathcal{H}^2$, we must utilize the inequality \eqref{Dipon-vasu-p1-eqn-008} wisely for the half-plane case. Let $T_{\frac{1}{2}+ \epsilon}$ be the translation mapping defined by 
\begin{equation*}
	T_{\frac{1}{2}+ \epsilon}(s)= s-\frac{1}{2}-\epsilon,
\end{equation*}
 which maps $\mathbb{C}_{\frac{1}{2}+ \epsilon}$ to $\mathbb{C}_0$, for every $\epsilon> 0$. Consider the space $H^2_i(\mathbb{C}_{\frac{1}{2}+\epsilon}, \xi)$, which is the image of $H^2_i(\mathbb{C}_0, \xi)$ that sends each function $f$ to the composition $f\circ T_{\frac{1}{2}+ \epsilon}$ (likewise we defined in \eqref{Dipon-vasu-p1-eqn-018}). Clearly, the mapping $\Psi_\eta\circ T_{\frac{1}{2}+ \epsilon}\circ\Phi\circ\Psi_\xi^{-1}$ maps unit disk $\mathbb{D}$ into itself. By applying \eqref{Dipon-vasu-p1-eqn-008} to the function $\Psi_\eta\circ T_{\frac{1}{2}+ \epsilon}\circ\Phi\circ\Psi_\xi^{-1}$, we obtain the norm estimate
\begin{align}\label{Dipon-vasu-p1-eqn-011}
	\norm{f'\circ\Phi}^2_{H^2_i(\mathbb{C}_0, \xi)} & \le\frac{1+ 2|\Psi_\eta\circ T_{\frac{1}{2}+ \epsilon}\circ\Phi\circ\Psi_\xi^{-1}(0)|}{1- 2|\Psi_\eta\circ T_{\frac{1}{2}+ \epsilon}\circ\Phi\circ\Psi_\xi^{-1}(0)|} \norm{f}^2_{H^2_i(\mathbb{C}_{\frac{1}{2}+\epsilon}, \eta)}\\ & =\frac{1+ 2|\Psi_\eta\circ T_{\frac{1}{2}+ \epsilon}\circ\Phi(\xi)|}{1- 2|\Psi_\eta\circ T_{\frac{1}{2}+ \epsilon}\circ\Phi(\xi)|} \norm{f}^2_{H^2_i(\mathbb{C}_{\frac{1}{2}+\epsilon}, \eta)}\nonumber,
\end{align}
for every $f\in H^2_i(\mathbb{C}_{\frac{1}{2}+\epsilon}, \eta)$. By applying \cite[Lemma 3.1]{Gordon-Michigan-1999}, we get $\Phi(\xi)$ approaches to $b_1$, as $\xi$ tends to infinity. Therefore, we have
 \begin{equation}\label{Dipon-vasu-p1-eqn-026}
	\frac{1+ 2|\Psi_\eta\circ T_{\frac{1}{2}+ \epsilon}\circ\Phi(\xi)|}{1- 2|\Psi_\eta\circ T_{\frac{1}{2}+ \epsilon}\circ\Phi(\xi)|}\to\frac{1+ 2|\Psi_\eta(b_1-\frac{1}{2}-\epsilon)|}{1- 2|\Psi_\eta(b_1-\frac{1}{2}-\epsilon)|}\,\,\mbox{as} \,\,\xi\to+\infty.
 \end{equation}
 From \cite[Theorem 4.11]{Hedenmalm-Duke-1997}, we have the following inequality
 \begin{equation}\label{Dipon-vasu-p1-eqn-009}
 	\int_{\theta}^{\theta+1}|f(\sigma+ it)|^2dt\le C\norm{f}^2_{\mathcal{H}^2},
 \end{equation}
 where the integration has taken over a segment in the boundary of the half-plane $\mathbb{C}_{\frac{1}{2}}$. Here $\theta$ is an arbitrary real number, $\sigma> 1/2$ and $C$ is the absolute constant in \eqref{Dipon-vasu-p1-eqn-009}. A simple computation by using \eqref{Dipon-vasu-p1-eqn-011} and \eqref{Dipon-vasu-p1-eqn-026} shows that $f\in H^2_i(\mathbb{C}_{\frac{1}{2}+\epsilon, \eta})$, when $\sigma> 1/2$. Consequently, we have
 \begin{equation}\label{Dipon-vasu-p1-eqn-010}
 	\norm{f}^2_{H^2_i(\mathbb{C}_{\frac{1}{2}+\epsilon}, \eta)}\le C(\eta)\norm{f}^2_{\mathcal{H}^2},
 \end{equation}
 where $C(\eta)$ is a constant that depends on $\eta$ and $\eta\in\mathbb{C}_0$. Applying Carlson's theorem \cite{Hedenmalm-Duke-1997} on $\Phi_\sigma(s)= \Phi(\sigma+ s)$,  $0<\sigma<\infty$, we obtain 
 \begin{equation}\label{Dipon-vasu-p1-eqn-045}
 	\norm{f'\circ\Phi_\sigma}^2_{H^2_i(\mathbb{C}_0, \xi)}=\int_\mathbb{R}|f'\circ\Phi(\sigma+ it)|^2d\Lambda_\xi(t)\to\norm{f'\circ\Phi_\sigma}^2_{\mathcal{H}^2}\,\,\mbox{as}\,\,\xi\to\infty.
 \end{equation}
 From equations \eqref{Dipon-vasu-p1-eqn-011} and \eqref{Dipon-vasu-p1-eqn-045}, we have
 \begin{equation}\label{Dipon-vasu-p1-eqn-027}
 	\norm{f'\circ\Phi_\sigma}^2_{\mathcal{H}^2}\le\frac{1+ 2|\Psi_\eta(b_1-\frac{1}{2}-\epsilon)|}{1- 2|\Psi_\eta(b_1-\frac{1}{2}-\epsilon)|} \norm{f}^2_{H^2_i(\mathbb{C}_{\frac{1}{2}+\epsilon}, \eta)}.
 \end{equation}
 By letting $\sigma\to 0$ in \eqref{Dipon-vasu-p1-eqn-027}, we obtain 
 \begin{equation*}
 	 \norm{f'\circ\Phi}^2_{\mathcal{H}^2}\le\frac{1+ 2|\Psi_\eta(b_1-\frac{1}{2}-\epsilon)|}{1- 2|\Psi_\eta(b_1-\frac{1}{2}-\epsilon)|} \norm{f}^2_{H^2_i(\mathbb{C}_{\frac{1}{2}+\epsilon}, \eta)}.
 \end{equation*}
By taking $\eta= b_1-\frac{1}{2}-\epsilon$ and using the estimate \eqref{Dipon-vasu-p1-eqn-010}, we obtain
 \begin{equation}\label{Dipon-vasu-p1-eqn-012}
 	\norm{D_\Phi(f)}^2_{\mathcal{H}^2} = \norm{f'\circ\Phi}^2_{\mathcal{H}^2}\le C(\eta)\norm{f}^2_{\mathcal{H}^2}. 
 \end{equation}
 Hence, $D_\Phi$ defines a bounded composition-differentiation operator on $\mathcal{H}^2$. This completes the proof.
\end{pf}
\begin{rem}
		One may also attempt to prove Theorem \ref{Dipon-vasu-p1-thm-04} for the positive characteristic case, that is, for $\Phi\in\mathcal{G}_{\ge 1}.$
\end{rem} 

	The exact norm of $D_\Phi$ is still unknown. The proof of Theorem \ref{Dipon-vasu-p1-thm-04} will hold for any symbol $\Phi$ with zero characteristic that maps $\mathbb{C}_0$ to $\mathbb{C}_{\frac{1}{2}+\epsilon}$, for every $\epsilon>0$. We restrict our symbol in Theorem \ref{Dipon-vasu-p1-thm-02} to $\Phi(s)= c_1+ c_22^{-s}$ that satisfies the criteria of Theorem \ref{Dipon-vasu-p1-thm-04} to define a bounded composition-differentiation operator on $\mathcal{H}^2$.

\begin{pf}[\bf{Proof of Theorem \ref{Dipon-vasu-p1-thm-02}}]
In this proof, we will assume that the coefficients $c_1$ and $c_2$ of the symbol $\Phi$ are positive. This assumption is essential and it will influence the conclusions we reach throughout our argument. It is well known that 
	\begin{equation*}
		{D_\Phi}^*(k_a)= k_{\Phi(a)}^{(1)},
	\end{equation*}
	 for any $a\in\mathbb{C}_\frac{1}{2}$.
	In particular, we have
	\begin{equation*}
		\frac{{\norm{D_\Phi^*(k_a)}}_{\mathcal{H}^2}}{\norm{k_a}_{\mathcal{H}^2}}=\frac{\norm{ k_{\Phi(a)}^{(1)}}_{\mathcal{H}^2}}{\norm{k_a}_{\mathcal{H}^2}}=\sqrt{\frac{\sum_{n=1}^{\infty}n^{-2\Re (\Phi(a))}(\log n)^2}{\zeta(2\Re(a))}}.
	\end{equation*}
	If $D_\Phi$ is bounded on $\mathcal{H}^2$, then we see that,
	\begin{equation}\label{Dipon-vasu-p1-eqn-013}
		\norm{D_\Phi}_{\mathcal{H}^2}\ge\sup_{a\in\mathbb{C}_\frac{1}{2}}\frac{{\norm{D_\Phi^*(k_a)}}_{\mathcal{H}^2}}{\norm{k_a}_{\mathcal{H}^2}}=\sup_{a\in\mathbb{C}_\frac{1}{2}}\sqrt{\frac{\sum_{n=1}^{\infty}n^{-2\Re (\Phi(a))}(\log n)^2}{\zeta(2\Re(a))}}.
	\end{equation}
	Further, we have
	\begin{align*}
		\norm{D_\Phi f(s)}_{\mathcal{H}^2}=\norm{f'\circ\Phi(s)}_{\mathcal{H}^2} & =\norm{\sum_{n=1}^{\infty}-a_n n^{-\Phi(s)}\log n}_{\mathcal{H}^2}\\ & \le \left(\sum_{n=1}^{\infty}\frac{(\log n)^2}{n^{2\Re(\Phi(s))}}\right)^\frac{1}{2}\left(\sum_{n=1}^{\infty}|a_n|^2\right)^{\frac{1}{2}}\\&\le \left(\sum_{n=1}^{\infty}\frac{(\log n)^2}{n^{2\Re(\Phi(s))}}\right)^\frac{1}{2} \norm{f}_{\mathcal{H}^2}\,\,\,\mbox{for all}\,\,{f\in\mathcal{H}^2}.
	\end{align*}
	Therefore, in view of Lemma \ref{Dipon-vasu-p1-lem-002}, it is easy to see that
	\begin{align}\label{Dipon-vasu-p1-eqn-014}
		\norm{D_\Phi f}^2_{\mathcal{H}^2}\le \frac{2}{(2\Re(\Phi(s))-1)^2}\zeta(2\Re(\Phi(s)))\norm{f}_{\mathcal{H}^2}^2,
	\end{align}
	 holds for every $f\in\mathcal{H}^2$ and $s\in\mathbb{C}_\frac{1}{2}$. Thus, \eqref{Dipon-vasu-p1-eqn-014} implies,
	\begin{align}\label{Dipon-vasu-p1-eqn-015}
		\norm{D_\Phi}^2_{\mathcal{H}^2}& \le\frac{2}{(2\Re(\Phi(s))-1)^2}\zeta(2\Re(\Phi(s))),\,\,\,\mbox{for}\,\,{s\in\mathbb{C}_\frac{1}{2}}\\& \nonumber\le\sup_{s\in\mathbb{C}_\frac{1}{2}}\frac{2}{(2\Re(\Phi(s))-1)^2}\zeta(2\Re(\Phi(s)))\\& \nonumber=\sup_{\sigma>\frac{1}{2}}\frac{2}{(2c_1-2c_2 2^{-\sigma}-1)^2}\zeta(2c_1-2c_2 2^{-\sigma})\\&=\frac{2}{(2\Re(c_1)-1)^2}\zeta(2\Re(c_1))\nonumber.
	\end{align}
Here, $\zeta(s)$ is a decreasing function on the interval (1,$\infty$) and the upper bound of $D_\Phi$ can be obtained by letting $\sigma\rightarrow\infty$ since $\sigma>\frac{1}{2}$. From \eqref{Dipon-vasu-p1-eqn-013}, we have
	\begin{align}
		\norm{D_\Phi}^2_{\mathcal{H}^2} \ge\sup_{a\in\mathbb{C}_\frac{1}{2}}\frac{\sum_{n=1}^{\infty}n^{-2\Re (\Phi(a))}(\log n)^2}{\zeta(2\Re(a))}.
	\end{align}
		It is easy to see that for $a\in\mathbb{C}_\frac{1}{2}$, we have
	\begin{equation*}
		\sup_{a\in\mathbb{C}_\frac{1}{2}}\zeta(2\Re(a))= \sup_{x>1/2}\zeta(2x)\rightarrow 1,\,\,\, \mbox{as}\,\, x\rightarrow\infty,
	\end{equation*}
	where $x$ is the real part of $a$. Hence 
	\begin{align}\label{Dipon-vasu-p1-eqn-016}
		\norm{D_\Phi}^2_{\mathcal{H}^2}& \ge \sup_{a\in\mathbb{C}_\frac{1}{2}}\sum_{n=1}^{\infty}\frac{(\log n)^2}{n^{2\Re(\Phi(a))}}\\&=\sup_{x>1/2}\sum_{n=1}^{\infty}\frac{(\log n)^2}{{n}^{(2\Re (c_1)- 2c_22^{-x})}}\nonumber\\ &\nonumber= \sum_{n=1}^{\infty}\frac{(\log n)^2}{{n}^{2\Re (c_1)}}\\ &\ge \frac{2}{(2\Re (c_1)- 1)^3}\nonumber.
	\end{align}
	The last inequality follows from the fact that, the function $f(x)= (\log x)^2/x^{2\Re(c_1)}$ is continuous, decreasing on $(1,\infty)$ and so
	\begin{equation}\label{Dipon-vasu-p1-eqn-048}
	\sum_{k=1}^{\infty}f(k)\ge\int_{1}^{\infty}f(x)dx.
	\end{equation} 
For more information regarding \eqref{Dipon-vasu-p1-eqn-048}, we refer to \cite{Ponnusamy-Analysis-2012}.
 This completes the proof.
\end{pf}
\begin{rem}
	The obtained lower bound for $D_\Phi$ in the proof of Theorem \ref{Dipon-vasu-p1-thm-02} holds for any symbol $\Phi$ in $\mathcal{G}_0$.
\end{rem}

\begin{cor}
	Let the symbol $\Phi(s)=c_1+c_2 2^{-s}$ be with $c_2\ne 0$ and $\Re(c_1)= |c_2|+ 1/2$. Then for the operator $D_\Phi:\mathcal{H}^2\to\mathcal{H}^2$, 
	we have 
	\begin{equation*}
		\frac{1}{4c_2^3}\le\norm{D_\Phi}^2_{\mathcal{H}^2}\le\frac{1}{2c_2^2}\zeta(1+2|c_2|).
	\end{equation*}
\end{cor}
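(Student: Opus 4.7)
The plan is to recognize this corollary as the boundary case $\Re(c_{1})=|c_{2}|+\tfrac{1}{2}$ of Theorem \ref{Dipon-vasu-p1-thm-02}, to check that the two estimates obtained in its proof extend to this limit, and finally to substitute $2\Re(c_{1})-1=2|c_{2}|$ into the resulting expressions so that the exponents and constants align exactly with the stated inequalities.

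For the lower bound I would reproduce the adjoint-kernel calculation from the proof of Theorem \ref{Dipon-vasu-p1-thm-02}. Since $D_{\Phi}^{*}k_{a}=k_{\Phi(a)}^{(1)}$, one has
$$\|D_{\Phi}\|_{\mathcal{H}^{2}}^{2}\;\ge\;\frac{\|k_{\Phi(a)}^{(1)}\|_{\mathcal{H}^{2}}^{2}}{\|k_{a}\|_{\mathcal{H}^{2}}^{2}}=\frac{\sum_{n\ge 1}(\log n)^{2}n^{-2\Re\Phi(a)}}{\zeta(2\Re(a))},$$
and letting $\Re(a)\to+\infty$, so that $\Phi(a)\to c_{1}$ and $\zeta(2\Re(a))\to 1$, gives $\|D_{\Phi}\|^{2}\ge\sum_{n\ge 1}(\log n)^{2}n^{-(1+2|c_{2}|)}$. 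The monotone integral comparison \eqref{Dipon-vasu-p1-eqn-048} then produces
$$\|D_{\Phi}\|_{\mathcal{H}^{2}}^{2}\;\ge\;\int_{1}^{\infty}\frac{(\log x)^{2}}{x^{1+2|c_{2}|}}\,dx=\frac{2}{(2|c_{2}|)^{3}}=\frac{1}{4|c_{2}|^{3}},$$
matching the asserted lower bound $1/(4c_{2}^{3})$ under the convention $c_{2}>0$ used in the proof of Theorem \ref{Dipon-vasu-p1-thm-02}.

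For the upper bound I would invoke the pointwise Cauchy--Schwarz estimate \eqref{Dipon-vasu-p1-eqn-014},
$$\|D_{\Phi}f\|_{\mathcal{H}^{2}}^{2}\;\le\;\frac{2\zeta(2\Re\Phi(s))}{(2\Re\Phi(s)-1)^{2}}\,\|f\|_{\mathcal{H}^{2}}^{2}\qquad\text{for every }s\in\mathbb{C}_{\frac{1}{2}},$$
and let $s\to+\infty$ along the positive real axis. Then $\Re\Phi(s)$ decreases monotonically toward $\Re(c_{1})=|c_{2}|+\tfrac{1}{2}$, and the right-hand side converges to $\tfrac{2\zeta(1+2|c_{2}|)}{(2|c_{2}|)^{2}}=\tfrac{\zeta(1+2|c_{2}|)}{2c_{2}^{2}}$, which is the stated upper bound.

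The main obstacle is that Theorem \ref{Dipon-vasu-p1-thm-04}, which is invoked in the proof of Theorem \ref{Dipon-vasu-p1-thm-02} to guarantee $D_{\Phi}\in\mathcal{B}(\mathcal{H}^{2})$, requires the strict inclusion $\Phi(\mathbb{C}_{0})\subset\mathbb{C}_{\frac{1}{2}+\epsilon}$, and this fails precisely at the boundary $\Re(c_{1})=|c_{2}|+\tfrac{1}{2}$, since $\Re\Phi(s)\to\tfrac{1}{2}$ as $\Re(s)\to 0^{+}$ with a suitable choice of imaginary part. I would handle this by a perturbation argument: the shifted symbol $\Phi_{\delta}(s):=\Phi(s)+\delta$, $\delta>0$, satisfies the strict hypothesis of Theorem \ref{Dipon-vasu-p1-thm-02}, so that $\|D_{\Phi_{\delta}}\|_{\mathcal{H}^{2}}^{2}\le 2\zeta(2\Re(c_{1})+2\delta)/(2\Re(c_{1})+2\delta-1)^{2}$ uniformly as $\delta\to 0^{+}$. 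Testing $D_{\Phi_{\delta}}$ against Dirichlet polynomials (which are dense in $\mathcal{H}^{2}$) and noting that $D_{\Phi_{\delta}}P\to D_{\Phi}P$ coefficientwise for every polynomial $P$, one then passes to the limit $\delta\to 0^{+}$ by lower semicontinuity to conclude both the boundedness of $D_{\Phi}$ and the displayed upper bound.
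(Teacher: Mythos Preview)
Your approach matches the paper's: the corollary is stated without proof immediately after Theorem~\ref{Dipon-vasu-p1-thm-02}, so the intended argument is simply to substitute $2\Re(c_1)-1=2|c_2|$ into the bounds of \eqref{Dipon-vasu-p1-eqn-002}. You go further than the paper by observing that the strict hypothesis $\Re(c_1)>|c_2|+\tfrac{1}{2}$ of Theorem~\ref{Dipon-vasu-p1-thm-02} fails at equality and by supplying a perturbation argument through $\Phi_\delta=\Phi+\delta$ to justify boundedness and pass to the limit---a point the paper does not address at all.
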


\section{Approximation Numbers}
The $\textit{n-th approximation number}$ of an operator $T$, also known as the $\textit{singular values}$ \cite[Theorem 8.7.3]{Queffelec-TRM-2020} of $T$ on a Hilbert space $H$, is defined as 
\begin{equation*} a_n(T)= \inf\{\norm{T-A}: A\in L(H)\,\, \mbox{with rank}<n\}, 
\end{equation*} 
for $n\in\mathbb{N}$, where $L(H)$ represents the collection of all operators from the Hilbert space $H$ into itself. In a broader sense,  it signifies the distance between \( T \) and the operators of rank less than \( n \) in the operator norm. An upper bound of $a_n(C_\Phi)$ operating on $\mathcal{H}^2$ has been established by Muthukumar \textit{et al.} \cite{muthukumar-IEOT-2018} by using the special symbol $\Phi(s)= c_1+c_2 2^{-s}$, assuming that $2\Re c_1-2|c_2|-1$ is positive. A precise approximation value for $C_\Phi$ in the classical case $\mathbb{H}^2(\mathbb{D})$ was earlier provided by Clifford and Dabkowski \cite{Clifford-JMA-2005} by using the function $\Phi(z)= az+b$, where $|a|+|b|\le 1$. Now, as a necessary and sufficient condition for the boundedness of $D_\Phi$, we shall present the upper bound for the approximation number of $D_\Phi$ for the same symbol $\Phi(s)= c_1+c_2 2^{-s}$ with $2\Re c_1-2|c_2|-1>0$ acting on $\mathcal{H}^2$. 
\begin{thm}\label{Dipon-vasu-p1-thm-05}
	Let $\Phi(s)= c_1+c_2 2^{-s}$ be with $2\Re c_1-2|c_2|-1>0$. Then 
	\begin{equation*}
		a_{n+1}(D_\Phi)\le\sqrt{\frac{2c_1}{((2c_1-1)^2-(2c_2)^2)^3}\left(16\left(\frac{2c_2}{2c_1-1}\right)^2+4\left(\frac{2c_2}{2c_1-1}\right)^{2n}\right)}.
	\end{equation*}
\end{thm}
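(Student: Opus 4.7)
The plan is to exploit the special form of the symbol $\Phi(s)=c_1+c_2 2^{-s}$, which depends on $s$ only through $2^{-s}$, to construct an explicit finite-rank approximation to $D_\Phi$ and then estimate the error via Lemmas \ref{Dipon-vasu-p1-lem-001} and \ref{Dipon-vasu-p1-lem-002}. The starting observation is that, expanding the exponential,
\[
D_\Phi(m^{-s})=-(\log m)\,m^{-c_1}\,e^{-c_2(\log m)2^{-s}}=\sum_{k\ge 0}\frac{(-1)^{k+1}c_2^k(\log m)^{k+1}}{k!}\,m^{-c_1}\,(2^k)^{-s},
\]
so the range of $D_\Phi$ is contained in the closed subspace of $\mathcal{H}^2$ spanned by the orthonormal family $\{e_k(s):=(2^k)^{-s}\}_{k\ge 0}$. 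In particular, for $f=\sum_m a_m m^{-s}\in \mathcal{H}^2$ one has $D_\Phi f=\sum_{k\ge 0}\alpha_k(f)\,e_k$ where
\[
\alpha_k(f)=\frac{(-1)^{k+1}c_2^k}{k!}\sum_{m\ge 1}a_m(\log m)^{k+1}m^{-c_1}.
\]

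With $P_n$ the orthogonal projection onto $\mathrm{span}\{e_0,\dots,e_{n-1}\}$, the operator $A_n:=P_n D_\Phi$ has rank at most $n$, so $a_{n+1}(D_\Phi)\le \|D_\Phi-A_n\|$ and $\|(D_\Phi-A_n)f\|_{\mathcal{H}^2}^2=\sum_{k\ge n}|\alpha_k(f)|^2$. Cauchy--Schwarz applied to the inner sum in $\alpha_k(f)$, together with Lemma \ref{Dipon-vasu-p1-lem-002} used with exponent $2k+2$ and $s=2c_1>1$, and Lemma \ref{Dipon-vasu-p1-lem-001} to bound $\zeta(2c_1)\le 2c_1/(2c_1-1)$, give
\[
|\alpha_k(f)|^2\le \frac{c_2^{2k}}{(k!)^2}\cdot\frac{(2k+2)!\,\cdot 2c_1}{(2c_1-1)^{2k+3}}\,\|f\|_{\mathcal{H}^2}^2,
\]
and summing over $k\ge n$ yields
\[
\|D_\Phi-A_n\|^2\le \frac{2c_1}{(2c_1-1)^3}\sum_{k\ge n}\frac{(2k+2)!\,c_2^{2k}}{(k!)^2(2c_1-1)^{2k}}.
\]

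The final task is to reorganize the tail series into the shape on the right-hand side of the theorem. Writing $(2k+2)!/(k!)^2=(2k+2)(2k+1)\binom{2k}{k}$ and using $\binom{2k}{k}\le 4^k$, the sum becomes $\sum_{k\ge n}(2k+2)(2k+1)\,r^k$ with $r:=(2c_2/(2c_1-1))^2<1$. I would then decompose $(2k+2)(2k+1)=4(k+1)(k+2)-6(k+1)$ and apply the generating-function identities $\sum_{k\ge 0}(k+1)(k+2)r^k=2/(1-r)^3$ and $\sum_{k\ge 0}(k+1)r^k=1/(1-r)^2$, shifted to start at $k=n$, and use the factorization $(2c_1-1)^2(1-r)=(2c_1-1)^2-(2c_2)^2$ to recover the denominator $((2c_1-1)^2-(2c_2)^2)^3$ of the theorem. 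The main obstacle I expect is in this last step: converting the resulting polynomial-in-$r$-and-$r^n$ numerator into the clean form $16r+4r^n$ requires balancing each inequality carefully (likely retaining $\binom{2k}{k}/4^k$ rather than using $\binom{2k}{k}\le 4^k$ uniformly on the lowest-order terms) and performing a lengthy but otherwise routine algebraic simplification to absorb all lower-order contributions.
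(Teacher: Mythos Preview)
Your proposal is correct and follows essentially the same route as the paper: the same finite-rank truncation $A_n=P_nD_\Phi$ onto $\mathrm{span}\{e_0,\dots,e_{n-1}\}$, the same Cauchy--Schwarz estimate combined with Lemmas~\ref{Dipon-vasu-p1-lem-001} and~\ref{Dipon-vasu-p1-lem-002}, and the same central-binomial bound $\binom{2k}{k}\le 4^k$.

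The only divergence is in the final algebra, and here your anticipated ``obstacle'' is not a real one. The paper does not keep the factor $(2k+2)(2k+1)$ or refine the bound $\binom{2k}{k}\le 4^k$; instead it makes one further crude estimate, $(2k+2)(2k+1)\le (2k+2)^2$, and then evaluates $\sum_{k\ge n}(2k+2)^2 r^k$ (with $r=x^2$) \emph{exactly} by computing the full sum $S=\sum_{k\ge 1}$ and the partial sum $T=\sum_{k=1}^{n-1}$ via elementary telescoping, obtaining
\[
S-T\le \frac{16x^2-12x^4+4x^{2n}}{(1-x^2)^3}\le \frac{16x^2+4x^{2n}}{(1-x^2)^3},
\]
from which the stated bound follows directly via $(2c_1-1)^2(1-x^2)=(2c_1-1)^2-(2c_2)^2$. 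So no delicate balancing is needed: the clean numerator $16r+4r^n$ comes simply from dropping the negative term $-12x^4$ and other negative contributions in $S-T$. Your generating-function decomposition would also work (and would in fact give a slightly sharper constant), but the paper's extra inequality $(2k+2)(2k+1)\le(2k+2)^2$ makes the closed-form computation shorter.
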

\begin{pf}
Let $f(s)=\sum_{n=1}^{\infty}a_n n^{-s}$ belongs to $\mathcal{H}^2$. Without loss of generality, we assume that $c_1$ and $c_2$ are non-negative constant terms. Then a simple computation shows that
	\begin{align}\label{Dipon-vasu-p1-eqn-023}
		D_\Phi f(s) &=f'(\Phi(s))\\&=-\sum_{n=1}^{\infty}a_n n^{-c_1-c_2 2^{-s}}\log n\nonumber\\&=-\sum_{n=1}^{\infty}a_n n^{-c_1}\log n\exp(-c_2 2^{-s}\log n)\nonumber\\&=-\sum_{n=1}^{\infty}a_n n^{-c_1}\log n\left(\sum_{k=0}^{\infty}\frac{(-c_2)^k}{k!}2^{-ks}(\log n)^k\right)\nonumber\\&=-\sum_{k=0}^{\infty}\frac{(-c_2)^k}{k!}\left(\sum_{n=1}^{\infty}a_n n^{-c_1}(\log n)^{k+1}\right)2^{-ks}\nonumber.
	\end{align}
Let $R$ denote the operator of rank $\le n$, which can be defined as
	\begin{equation}\label{Dipon-vasu-p1-eqn-024}
		Rf(s)=-\sum_{k=0}^{n-1}\frac{(-c_2)^k}{k!}\left(\sum_{n=1}^{\infty}a_n n^{-c_1}(\log n)^{k+1}\right)2^{-ks}.
	\end{equation}
	Based on equations \eqref{Dipon-vasu-p1-eqn-023} and \eqref{Dipon-vasu-p1-eqn-024}, and applying Lemma \ref{Dipon-vasu-p1-lem-001} and Lemma \ref{Dipon-vasu-p1-lem-002}, we obtain
	\begin{align*}
		\norm{D_\Phi(f)-R(f)}^2_{\mathcal{H}^2}&=\sum_{k=n}^{\infty}\frac{(c_2)^{2k}}{{k!}^2}{\left|\sum_{n=1}^{\infty}a_n n^{-c_1}(\log n)^{k+1}\right|}^2\\&\vspace{1mm}\le\sum_{k=n}^{\infty}\frac{(c_2)^{2k}}{(k!)^2}\left(\sum_{n=1}^{\infty}\frac{(\log n)^{2k+2}}{n^{2c_1}}\right)\left(\sum_{n=1}^{\infty}|a_n|^2\right)\\&\vspace{1mm}\le\sum_{k=n}^{\infty}\frac{(c_2)^{2k}}{(k!)^2}\frac{(2k+2)!}{(2c_1-1)^{2k+2}}\zeta(2c_1)\norm{f}_{\mathcal{H}^2}^2\\&\le\sum_{k=n}^{\infty}\frac{(c_2)^{2k}}{(k!)^2}\frac{(2k+2)(2k+1)(2k)!}{(2c_1-1)^{2k+2}}\frac{2c_1}{2c_1-1}\norm{f}_{\mathcal{H}^2}^2.
	\end{align*}
	 It is easy to see that
	\begin{equation}\label{Dipon-vasu-p1-eqn-025}
		\frac{(2k)!}{(k!)^2}=\binom{2k}{k}\le\sum_{j=0}^{2k}\binom{2k}{j}=2^{2k}.
	\end{equation}
	
A simple computation by using \eqref{Dipon-vasu-p1-eqn-025}, we obtain for every $f\in\mathcal{H}^2$
	\begin{align}\label{Dipon-vasu-p1-eqn-017}
		\norm{D_\Phi-R}^2_{\mathcal{H}^2}& \le\sum_{k=n}^{\infty}\frac{(c_2)^{2k}}{(k!)^2}\frac{(2k+2)(2k+1)(2k)!}{(2c_1-1)^{2k+2}}\frac{2c_1}{(2c_1-1)^3}\\& \nonumber\le\frac{2c_1}{(2c_1-1)^3}\sum_{k=n}^{\infty}\frac{(2c_2)^{2k}}{(2c_1-1)^{2k}}(2k+2)(2k+1)\\&\le\frac{2c_1}{(2c_1-1)^3}\sum_{k=n}^{\infty}\left(\frac{2c_2}{2c_1-1}\right)^{2k}(2k+2)^2\nonumber.
	\end{align}
We shall now deduce the above inequality in a more specified form. Assume\\ $x= 2c_2/(2c_1-1)$. Then the series $\sum_{k=n}^{\infty}x^{2k}(2k+2)^2$ can be deduced if $|x|<1$, that is, when $2c_1-|c_2|-1$ is positive.
	\begin{align*}
		S&=\sum_{k=1}^{\infty}x^{2k}(2k+2)^2\\&=x^24^2+x^46^2+x^68^2+x^810^2+\cdots\\&=(4x)^2+(6x^2)^2+(8x^3)^2+(10x^4)^2+\cdots.
	\end{align*}
Then it is easy to see that
	\begin{equation*}
		\frac{S-Sx^2-16x^2}{2}=10x^4+14x^6+18x^8+\cdots.
	\end{equation*}
	Let $S_1= ({S-Sx^2-16x^2})/{2}$. Then
	\begin{equation*}
		x^2S_1= 10x^6+14x^8+18x^{10}+\cdots.
	\end{equation*}
	It is easy to see that
	\begin{align*}
		S_1-x^2S_1&= 10x^4+4x^6+4x^8+\cdots\\&= 10x^4+\frac{4x^6}{1-x^2}\\&=\frac{10x^4-6x^6}{1-x^2}
	\end{align*}
	and so
	\begin{equation*}
		S_1=\frac{x^4(10-6x^2)}{(1-x^2)^2},
	\end{equation*}
	which shows that
	\begin{equation*}
		\frac{S-Sx^2-16x^2}{2}=\frac{x^4(10-6x^2)}{(1-x^2)^2}.
	\end{equation*}
	Further, we have
	\begin{align*}
		\frac{S(1-x^2)}{2}&=\frac{x^4(10-6x^2)}{(1-x^2)^2}+8x^2\\&=\frac{2x^6-6x^4+8x^2}{(1-x^2)^2}\\&=\frac{2x^2(x^4-3x^2+4)}{(1-x^2)^2}.
	\end{align*}
	Hence
	\begin{equation*}
		S=\frac{4x^2(x^4-3x^2+4)}{(1-x^2)^3}.
	\end{equation*}
	Let $T=\sum_{k=1}^{n-1}x^{2k}(2k+2)^2$. By utilizing the same approach we applied to calculate the value of $S$, we can effectively derive the value of $T$.
	
	\begin{equation*}
		T=\frac{4x^2(4+x^2)}{(1-x^2)^2}+4x^6\frac{1-x^{2n-6}}{(1-x^2)^3}+\frac{(1-2n-2n^2)x^{2n}}{(1-x^2)^2}+2n^2\frac{x^{2n+2}}{(1-x^2)^2}.
	\end{equation*}
	Then
	\begin{align}\label{Dipon-vasu-p1-eqn-047}
		\sum_{k=n}^{\infty}x^{2k}(2k+2)^2&= S-T\nonumber\\&=\frac{4x^2(x^4-3x^2+4)}{(1-x^2)^3}-\bigg(\frac{4x^2(4+x^2)}{(1-x^2)^2}+4x^6\frac{1-x^{2n-6}}{(1-x^2)^3}+\frac{(1-2n-2n^2)x^{2n}}{(1-x^2)^2}\nonumber\\&\hspace{40mm}+2n^2\frac{x^{2n+2}}{(1-x^2)^2}\bigg)\nonumber\\\vspace{3mm}&\vspace{2mm}=\left(\frac{4x^6-12x^4+16x^2}{(1-x^2)^3}-\frac{4x^6-4x^{2n}}{(1-x^2)^3}\right)-\bigg(\frac{4x^2(4+x^2)}{(1-x^2)^2}+\frac{(1-2n-2n^2)x^{2n}}{(1-x^2)^2}\nonumber\\\vspace{3mm}&\hspace{40mm}+2n^2\frac{x^{2n+2}}{(1-x^2)^2}\bigg)\nonumber\\&\le\frac{16x^2-12x^4+4x^{2n}}{(1-x^2)^3}.
	\end{align}
	Consequently, after substituting the value of $x$ in \eqref{Dipon-vasu-p1-eqn-047} and using \eqref{Dipon-vasu-p1-eqn-017}, we obtain
	\begin{align*}
		\norm{D_\Phi-R}^2_{\mathcal{H}^2}&\le\frac{2c_1}{(2c_1-1)^3}\frac{16\left(\frac{2c_2}{2c_1-1}\right)^2-12\left(\frac{2c_2}{2c_1-1}\right)^4+4\left(\frac{2c_2}{2c_1-1}\right)^{2n}}{\left(1-\left(\frac{2c_2}{2c_1-1}\right)^2\right)^3}\\\vspace{3mm}&=\frac{2c_1}{((2c_1-1)^2-(2c_2)^2)^3}\left(16\left(\frac{2c_2}{2c_1-1}\right)^2-12\left(\frac{2c_2}{2c_1-1}\right)^4+4\left(\frac{2c_2}{2c_1-1}\right)^{2n}\right)\\\vspace{3mm}&\le\frac{2c_1}{((2c_1-1)^2-(2c_2)^2)^3}\left(16\left(\frac{2c_2}{2c_1-1}\right)^2+4\left(\frac{2c_2}{2c_1-1}\right)^{2n}\right).
	\end{align*}
	Finally, we have
	\begin{equation}\label{Dipon-vasu-p1-eqn-001}
		\norm{D_\Phi-R}_{\mathcal{H}^2}\le\sqrt{\frac{2c_1}{((2c_1-1)^2-(2c_2)^2)^3}\left(16\left(\frac{2c_2}{2c_1-1}\right)^2+4\left(\frac{2c_2}{2c_1-1}\right)^{2n}\right)}.
	\end{equation}
	This completes the proof.
\end{pf}
	\begin{rem}
	When $n=0$ in \eqref{Dipon-vasu-p1-eqn-001}, we can see
	\begin{equation*}
		a_1(D_\Phi)=\norm{D_\Phi}_{\mathcal{H}^2}\le\sqrt{\frac{2c_1}{((2c_1-1)^2-(2c_2)^2)^3}\left(16\left(\frac{2c_2}{2c_1-1}\right)^2\right)},
	\end{equation*}
	which is one of the estimate of norm for the operator $D_\Phi$ using approximation number. For more properties of approximation numbers, we refer to \cite[Chapter 2]{Carl-Entropy-1990}.
\end{rem}

\section{Self-Adjointness and Normality}
We shall now characterize which symbols $\Phi$ make the operator $D_\Phi$ self-adjoint and normal on $\mathcal{H}^2$. First we recall the following lemma from \cite[Lemma 4]{Bayart-monatsh-2002}.
\begin{lem}\label{Dipon-vasu-p1-lem-003}
	Let $\Phi(s)= c_0s+\phi(s), \Phi: \mathbb{C}_0\to\mathbb{C}_0$. If $\Phi(s)\ne s+ik, k\in\mathbb{R}$, then there exist $\eta>0$ and $\epsilon>0$ such that $\Phi(\mathbb{C}_{\frac{1}{2}-\epsilon})\subset\mathbb{C}_{\frac{1}{2}+\eta}$.
\end{lem}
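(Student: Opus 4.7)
The plan is a case analysis on the characteristic $c_0$ and on whether $\phi$ is constant, using (as is implicit from the lemma's origin in Bayart's work) that $\Phi\in\mathcal{G}_0\cup\mathcal{G}_{\ge 1}$, so $\phi(\mathbb{C}_0)\subset\mathbb{C}_{1/2}$ when $c_0=0$ and $\Phi(\mathbb{C}_0)\subset\mathbb{C}_0$ when $c_0\ge 1$. I would first dispatch the affine subcases directly: if $c_0\ge 1$ and $\phi\equiv ik$, the hypothesis $\Phi\ne s+ik$ forces $c_0\ge 2$, and then $\Re\Phi=c_0\Re s>1-2\epsilon>1/2$ on $\mathbb{C}_{1/2-\epsilon}$ for any $\epsilon<1/4$, giving $\eta=1/2-2\epsilon$; and if $c_0=0$ and $\phi\equiv c_1$, then openness of $\mathbb{C}_{1/2}$ forces $\Re c_1>1/2$ strictly, so the conclusion is immediate.

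For the non-constant case the central step is a rigidity statement at infinity: one needs $\Re c_1>1/2$ when $c_0=0$, and $\Re c_1>0$ when $c_0=1$ with $\phi$ non-constant (the case $c_0\ge 2$ needs no extra input, since $c_0\Re s$ already dominates on $\mathbb{C}_{1/2-\epsilon}$). In each case the relevant object is a positive harmonic function on $\mathbb{C}_0$ -- namely $\Re\phi-1/2$ when $c_0=0$, and $\Re\Phi$ when $c_0\ge 1$ -- whose boundary value is non-negative along $i\mathbb{R}$ (by the hypothesis $\Phi(\mathbb{C}_0)\subset\mathbb{C}_0$ or $\phi(\mathbb{C}_0)\subset\mathbb{C}_{1/2}$) and which tends to $\Re c_1-1/2$ (resp.\ $\Re c_1$) as $\Re s\to\infty$. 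A Phragm\'en--Lindel\"of/minimum-principle argument, combined with the Bohr almost-periodicity of Dirichlet series in the imaginary direction, shows that equality at infinity would force this function to vanish identically, contradicting the non-constancy of $\phi$ (or bringing us back to the excluded case $\Phi=s+ik$). With the strict inequality secured, the uniform convergence of the Dirichlet series on each $\mathbb{C}_\delta$ yields $\Re\Phi>1/2+\eta$ on a right tail $\{\Re s\ge R\}$, and almost-periodicity furnishes a uniform lower bound strictly above $1/2$ on the remaining vertical strip $\{1/2-\epsilon\le\Re s\le R\}$.

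The main obstacle will be this rigidity step: ruling out the critical boundary value of $\Re c_1$ requires combining the minimum principle for positive harmonic functions on the half-plane with the almost-periodic structure of Dirichlet series, since the vertical direction of $\mathbb{C}_0$ is not compact and a priori the infimum of $\Re\Phi$ could be approached along sequences with $|\Im s|\to\infty$. This is also the only place where the exclusion $\Phi\ne s+ik$ is genuinely used; once the boundary-value rigidity is established, the tail and strip estimates combine in a straightforward manner to produce the required pair $\epsilon,\eta>0$.
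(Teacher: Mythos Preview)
The paper does not contain a proof of this lemma at all: it is simply quoted from Bayart's 2002 paper (the line immediately preceding the statement reads ``First we recall the following lemma from \cite[Lemma 4]{Bayart-monatsh-2002}''), and the only remark following it is the elementary observation about the affine case $\Phi(s)=s+c_1$. So there is nothing in the present paper to compare your argument against.

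That said, your outline is broadly in the right spirit for how Bayart's lemma is actually proved. The case split on $c_0$ and on constancy of $\phi$ is the natural structure, and the crux is indeed a rigidity statement showing that the infimum of $\Re\Phi$ over a right half-plane is attained strictly above the threshold unless $\Phi$ is a vertical translation. One point to be careful about: as stated here the hypothesis is only $\Phi:\mathbb{C}_0\to\mathbb{C}_0$, not $\Phi\in\mathcal{G}_0\cup\mathcal{G}_{\ge 1}$, so for $c_0=0$ you do not automatically have $\phi(\mathbb{C}_0)\subset\mathbb{C}_{1/2}$; you would need to derive the conclusion from $\Phi(\mathbb{C}_0)\subset\mathbb{C}_0$ alone (Bayart's original statement carries the full Gordon--Hedenmalm hypothesis, which the paper has suppressed). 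Your ``main obstacle'' paragraph correctly identifies where the substance lies --- controlling the infimum along vertical sequences via almost-periodicity --- but as written it is still a plan rather than a proof: you have not specified how the minimum principle and the almost-periodic recurrence combine to force $\Re c_1$ strictly above the critical value, and that is precisely the step that requires a genuine argument (e.g.\ invoking the Harnack inequality or a Julia--Carath\'eodory-type boundary rigidity transported to the half-plane).
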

As a key step here, when $c_0=1$ and $\phi(s)= c_1$ that is, $\Phi(s)= s+c_1$, then we have
\begin{equation*}
	\Phi(\mathbb{C}_{\frac{1}{2}-\epsilon})\subset\mathbb{C}_{\frac{1}{2}-\epsilon+\Re(c_1)}.
\end{equation*}
\begin{thm}\label{Dipon-vasu-p1-thm-06}
	The operator $D_\Phi$ is self-adjoint on $\mathcal{H}^2$ if, and only if, $\Phi$ is of the form $\Phi(s)= s+ c_1$, where $c_1$ is a real constant.
\end{thm}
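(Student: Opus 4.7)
My plan is to pass everything to the reproducing kernel level. Recall from the proof of Theorem~\ref{Dipon-vasu-p1-thm-02} that $D_\Phi^*k_a = k_{\Phi(a)}^{(1)}$ for every $a\in\mathbb{C}_{\frac12}$, and that the linear span of the kernels $\{k_a\}$ is dense in $\mathcal{H}^2$. Consequently $D_\Phi$ is self-adjoint if and only if $D_\Phi k_a = k_{\Phi(a)}^{(1)}$ for every $a$, and the task reduces to determining which symbols realize this identity.

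For the sufficiency direction, substitute $\Phi(s) = s + c_1$ with $c_1\in\mathbb{R}$ into $D_\Phi k_a$: using $k_a(s) = \zeta(s + \bar a)$,
\[
D_\Phi k_a(s) = k_a'(s + c_1) = -\sum_{n\ge 1} n^{-(s + c_1 + \bar a)}\log n.
\]
Because $c_1\in\mathbb{R}$, one has $c_1 + \bar a = \overline{a + c_1} = \overline{\Phi(a)}$, so the right-hand side equals $k_{\Phi(a)}^{(1)}(s)$, and $D_\Phi = D_\Phi^*$ follows by density.

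For the necessity direction, write $\Phi(s) = c_0 s + \phi(s)$ with $\phi(s) = \sum_{n\ge 1}c_n n^{-s}$ as in Theorem~\ref{Dipon-vasu-p1-thm-01}, and compare the Dirichlet coefficients of $D_\Phi k_a(s) = -\sum_{n\ge 1}n^{-\bar a}\log n\cdot n^{-\Phi(s)}$ with those of $k_{\Phi(a)}^{(1)}(s) = -\sum_{m\ge 1}m^{-\overline{\Phi(a)}}\log m\cdot m^{-s}$. Expanding
\[
n^{-\Phi(s)} = (n^{c_0})^{-s}\,n^{-c_1}\prod_{k\ge 2}\sum_{j_k\ge 0}\frac{(-c_k\log n)^{j_k}}{j_k!}\,k^{-j_k s},
\]
the left-hand side becomes a Dirichlet series in $s$ whose frequencies are the integers of the form $n^{c_0}\prod k^{j_k}$. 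The trick is to test only on a prime frequency $m = p$, which eliminates nearly all contributions because any term with $n = 1$ carries the factor $\log n = 0$. If $c_0 = 0$, comparison of the coefficient of $1^{-s}$ forces $\zeta'(\bar a + c_1) = 0$ on a half-plane, impossible. If $c_0 \ge 2$, the divisibility constraint $n^{c_0}\mid p$ forces $n = 1$ and the $p^{-s}$-coefficient on the left vanishes while the one on the right does not. Hence $c_0 = 1$, and the only surviving decomposition $n = p$, $(j_k) = 0$ yields $c_1 = \overline{\phi(a)}$ for every $a\in\mathbb{C}_{\frac12}$; since the left-hand side is independent of $a$, $\phi$ reduces to the constant $c_1$, and $c_1 = \overline{c_1}$ forces $c_1\in\mathbb{R}$, so $\Phi(s) = s + c_1$.

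The main obstacle is the combinatorial bookkeeping in the multi-index expansion of $n^{-\Phi(s)}$ as a Dirichlet series in $s$. The clean way around it is to test only on prime frequencies $m = p$, which via $\log 1 = 0$ collapses the matching to the single scalar identity $c_1 = \overline{\phi(a)}$ and bypasses the full product expansion; the subsequent elimination of the cases $c_0 = 0$ and $c_0 \ge 2$ is then immediate.
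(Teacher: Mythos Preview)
Your proof is correct. The sufficiency direction is identical to the paper's: both identify $D_\Phi k_a$ and $D_\Phi^*k_a=k_{\Phi(a)}^{(1)}$ via the reproducing kernel identity and check that they coincide when $\Phi(s)=s+c_1$ with $c_1\in\mathbb{R}$.

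For the necessity direction the two arguments diverge. The paper works with the orthonormal basis $e_n(s)=n^{-s}$ and compares the \emph{norms} $\|D_\Phi e_2\|$ and $\|D_\Phi^* e_2\|$, arguing (somewhat informally) that the inner products $\langle 2^{-c_0s-\phi(s)},2^{-c_0s-\phi(s)}\rangle$ and $\langle 2^{-s},2^{-c_0s}2^{-\phi(s)}\rangle$ are compatible only when $c_0=1$ and $c_n=0$ for $n\ge 2$. You instead stay at the reproducing-kernel level and compare the full Dirichlet expansions of $D_\Phi k_a$ and $k_{\Phi(a)}^{(1)}$ coefficient by coefficient, isolating the prime frequencies to collapse the multi-index product. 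Your route is more direct: equality of operators on a dense set is a strictly stronger hypothesis than equality of a single norm, so you avoid the paper's ``observation'' step entirely, and the three cases $c_0=0$, $c_0\ge 2$, $c_0=1$ fall out cleanly from the divisibility constraint $n^{c_0}\mid p$ together with the $\log 1=0$ kill. The paper's approach, on the other hand, only requires computing two scalars rather than handling the full product expansion. Both reach the scalar identity $\bar a+c_1=\overline{\Phi(a)}$ in the end.
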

\begin{proof}
   Since the span of all reproducing kernel Hilbert spaces is dense on $\mathcal{H}^2$, we have 
	\begin{equation*}
		\mathcal{H}^2=\overline{\cup_m\mathcal{K}_m},
	\end{equation*} where $\mathcal{K}_m$ represents the subspace spanned by the reproducing kernel $K_a$ and the derivative evaluation kernels at $a$ for total order up to and including $m$, where $a\in\mathbb{C}_{\frac{1}{2}}$. Now, assume $\Phi(s)= s+c_1$, where $c_1$ is a real constant. It is easy to see that
	\begin{equation}\label{Dipon-vasu-p1-eqn-028}
		D_\Phi(K_a(s))= K_a'(\Phi(s))=-\sum_{n=1}^{\infty}n^{-\Phi(s)}n^{-\bar{a}}\log n
	\end{equation}
	and
	\begin{equation}\label{Dipon-vasu-p1-eqn-029}
		D_\Phi^*(K_a(s))= K_{\Phi(a)}^{(1)}(s)=-\sum_{n=1}^{\infty}n^{-s}n^{-\overline{\Phi(a)}}\log n.
	\end{equation}
	 Then $D_\Phi$ will be self-adjoint if both $D_\Phi$ and $D_\Phi^*$ agree on the dense set $\mathcal{K}_m$. Hence, in view of \cite[Lemma 4.2]{Contreras-JFA-2023}, by applying in \eqref{Dipon-vasu-p1-eqn-028} and \eqref{Dipon-vasu-p1-eqn-029}, we obtain
	\begin{equation*}
		\Phi(s)+\bar{a}= s+\overline{\Phi(a)},
	\end{equation*}
	which is true if it satisfies the given specified condition
	\begin{equation*}
		\Phi(s)= s+c_1,
	\end{equation*}
	where $c_1$ is a real constant. \vspace{2mm}\\ \vspace{0.5mm}
	\hspace{1mm}
	Conversely, we assume that $D_\Phi$ is self-adjoint. Let the symbol $\Phi$ in the following form 
	\begin{equation}\label{Dipon-vasu-p1-eqn-030}
		\Phi(s)= c_0s+ c_1+\phi(s),\,\,\,\mbox{where}\,\,\,\phi(s)=\sum_{n=2}^{\infty}c_nn^{-s},
	\end{equation}
	as in Theorem \ref{Dipon-vasu-p1-thm-01}.
Our goal is to determine the coefficients \( c_i \)'s (for \( i \in \mathbb{N} \cup \{0\} \)) in equation \eqref{Dipon-vasu-p1-eqn-030} to determine the structure of \( \Phi \) required for the self-adjointness of the operator \( D_\Phi \). Let $e_n(s)= n^{-s}$ be the orthonormal basis of $\mathcal{H}^2$. Since $D_\Phi$ is self-adjoint on $\mathcal{H}^2$, it follows that $D_\Phi$ and its adjoint $D_\Phi^*$ must have the same norm for any orthonormal basis $e_n(s)$ for all $s\in\mathbb{C}_{\frac{1}{2}}$, and $n\in\mathbb{N}$. Therefore, we have
	\begin{align}\label{Dipon-vasu-p1-eqn-035}
		\norm{D_\Phi(e_2)(s)}^2_{\mathcal{H}^2}&=\norm{-2^{-\Phi(s)}\log2}^2_{\mathcal{H}^2}\\&=\innpdct{2^{-c_0s-c_1-\phi(s)}\log 2, 2^{-c_0s-c_1-\phi(s)}\log 2}\nonumber\\&= 2^{-2\Re c_1}(\log 2)^2\innpdct{2^{-c_0s-\phi(s)}, 2^{-c_0s-\phi(s)}}\nonumber\\&= 2^{-2 c_1}(\log 2)^2\nonumber,
	\end{align}
	for a real constant $c_1$. Further, in view of Parseval's identity on $\mathcal{H}^2$, we have
	\begin{align}\label{Dipon-vasu-p1-eqn-031}
		\norm{D_\Phi^*(e_2)(s)}^2_{\mathcal{H}^2}&=\sum_{n=1}^{\infty}\big|\innpdct{D_\Phi^*e_2(s), e_n(s)}\big|^2\\&=\sum_{n=1}^{\infty}\big|\innpdct{e_2(s), D_\Phi(e_n)(s)}\big|^2\nonumber\\&=\sum_{n=1}^{\infty}\big|\innpdct{e_2(s), -n^{-\Phi(s)}\log n}\big|^2\nonumber\\&=\big|\innpdct{e_2(s), -2^{-\Phi(s)}\log 2}\big|^2\vspace{2mm}\nonumber\\&=\big|\innpdct{2^{-s}, 2^{-c_0s}2^{-c_1} 2^{-\phi(s)}\log 2}\big|^2\vspace{2mm}\nonumber\\&\vspace{2mm}=\big|2^{-\overline{c_1}}\log 2\innpdct{2^{-s}, 2^{-c_0s}2^{-\phi(s)}}\big|^2\nonumber\\&= \big|2^{-\overline{c_1}}\big|^2(\log 2)^2\nonumber\\&= 2^{-2\Re c_1}(\log 2)^2\nonumber\\&= 2^{-2c_1}(\log 2)^2\nonumber,
	\end{align}
	for a real constant $c_1$. A simple observation in \eqref{Dipon-vasu-p1-eqn-035} and \eqref{Dipon-vasu-p1-eqn-031} indicates that $c_0=1$ and $c_n=0$ for $n\ge 2$. Otherwise, the inner product will be incompatible, as 

		$$\innpdct{n^{-s},m^{-s}}= 
	\begin{cases*}
		1, &\text{if} $n=m$ \\
		0, &\text{otherwise}.
	\end{cases*}$$\\ 
	Hence, $\Phi(s)= s+c_1$, where $c_1$ is a real constant.
	This completes the proof.
\end{proof}
\begin{thm}\label{Dipon-vasu-p1-thm-07}
	The operator $D_\Phi$ is normal on $\mathcal{H}^2$ if, and only if, $\Phi$ is of the form $\Phi(s)= s+ c_1$, where $c_1$ is a real constant.
\end{thm}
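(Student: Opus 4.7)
The plan is to parallel the proof of Theorem~\ref{Dipon-vasu-p1-thm-06}, weakening the self-adjoint identity $D_\Phi = D_\Phi^*$ to the normality identity $D_\Phi D_\Phi^* = D_\Phi^* D_\Phi$. The sufficiency direction is immediate: if $\Phi(s) = s + c_1$ with $c_1 \in \mathbb{R}$, then Theorem~\ref{Dipon-vasu-p1-thm-06} gives that $D_\Phi$ is self-adjoint on $\mathcal{H}^2$, and every self-adjoint bounded operator on a Hilbert space is normal, so no further work is required for this direction.

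For necessity, I would exploit the consequence $\norm{D_\Phi f}_{\mathcal{H}^2} = \norm{D_\Phi^* f}_{\mathcal{H}^2}$ of normality, valid for every $f \in \mathcal{H}^2$, by specialising to the orthonormal basis vector $e_2(s) = 2^{-s}$. Using Theorem~\ref{Dipon-vasu-p1-thm-01} to write
\[
\Phi(s) = c_0 s + c_1 + \phi(s), \qquad \phi(s) = \sum_{n \ge 2} c_n n^{-s},
\]
and following the computations in~\eqref{Dipon-vasu-p1-eqn-035} and~\eqref{Dipon-vasu-p1-eqn-031}, I would evaluate $\norm{D_\Phi e_2}_{\mathcal{H}^2}^2 = (\log 2)^2\norm{2^{-\Phi(s)}}_{\mathcal{H}^2}^2$ directly on one side, and use Parseval's identity together with the adjoint relation $\innpdct{D_\Phi^* e_2, e_n} = \innpdct{e_2, D_\Phi e_n} = -(\log n)\innpdct{e_2, n^{-\Phi(s)}}$ to obtain $\norm{D_\Phi^* e_2}_{\mathcal{H}^2}^2 = \sum_{n \ge 1}(\log n)^2|\innpdct{2^{-s}, n^{-\Phi(s)}}|^2$ on the other. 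Matching the two expressions and tracking which Dirichlet coefficients of $n^{-\Phi(s)}$ can pair nontrivially with $2^{-s}$ under the orthogonality $\innpdct{m^{-s}, n^{-s}} = \delta_{mn}$ forces $c_0 = 1$ and $c_k = 0$ for every $k \ge 2$, exactly as in Theorem~\ref{Dipon-vasu-p1-thm-06}. Hence $\Phi$ reduces to the form $\Phi(s) = s + c_1$.

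It remains to extract $c_1 \in \mathbb{R}$. With $\Phi(s) = s + c_1$ in hand, the proof of Theorem~\ref{Dipon-vasu-p1-thm-06} supplies the formulas $D_\Phi K_a(s) = -\sum_n n^{-s - c_1 - \bar{a}}\log n$ and $D_\Phi^* K_a(s) = K_{\Phi(a)}^{(1)}(s) = -\sum_n n^{-s - \bar{a} - \overline{c_1}}\log n$. I would then apply the commutator $D_\Phi D_\Phi^* - D_\Phi^* D_\Phi$ to a superposition of reproducing kernels $K_a + K_b$ with $a \ne b$, and equate the resulting Dirichlet series coefficient by coefficient in an attempt to extract the equation $c_1 = \overline{c_1}$.

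The main obstacle is precisely this last step. On each individual basis vector $e_n$, both $D_\Phi$ and $D_\Phi^*$ act as scalars $-\log n \cdot n^{-c_1}$ and $-\log n \cdot n^{-\overline{c_1}}$ respectively, and therefore commute on every $e_n$ regardless of whether $c_1$ is real; the same phenomenon occurs on each individual kernel $K_a$. To detect the imaginary part of $c_1$, one must test the commutator against vectors on which the scalars $n^{-c_1}$ and $n^{-\overline{c_1}}$ fail to cancel --- for instance, through cross terms in a superposition of reproducing kernels at distinct evaluation points, or via the derivative kernel $k_a^{(1)}$, where the phase distinction between $c_1$ and $\overline{c_1}$ becomes visible.
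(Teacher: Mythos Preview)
Your approach mirrors the paper's, which offers only a one-line remark that ``the proof for normality of the operator $D_\Phi$ follows the same argument as that of the above theorem'' (i.e., the self-adjoint Theorem~\ref{Dipon-vasu-p1-thm-06}). So in terms of strategy you are aligned with the paper.

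However, the obstacle you flag in your final paragraph is fatal, and your proposed workarounds cannot succeed. Once you have reduced to $\Phi(s)=s+c_1$, the operator $D_\Phi$ is \emph{diagonal} in the orthonormal basis $(e_n)_{n\ge 1}$: indeed $D_\Phi e_n = -(\log n)\,n^{-c_1} e_n$ and hence $D_\Phi^* e_n = -(\log n)\,n^{-\overline{c_1}} e_n$. Two operators that are simultaneously diagonal in an orthonormal basis commute on the entire space; the commutator $D_\Phi D_\Phi^* - D_\Phi^* D_\Phi$ is therefore identically zero, and no choice of test vector---superpositions of kernels, derivative kernels, or anything else---can produce a nonzero output. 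There are no ``cross terms'' to exploit, because the off-diagonal matrix entries of both operators vanish.

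In other words, for every $c_1$ with $\Re(c_1)>0$ (so that $\sup_n (\log n)\,n^{-\Re(c_1)}<\infty$ and $D_\Phi$ is bounded), the operator $D_\Phi$ with symbol $\Phi(s)=s+c_1$ is normal whether or not $c_1$ is real. This is not a gap in your argument that can be patched; it indicates that the ``only if'' direction of the theorem, as stated, is false. The paper's remark inherits the same problem: the norm identity $\norm{D_\Phi e_2}=\norm{D_\Phi^* e_2}$ used in the proof of Theorem~\ref{Dipon-vasu-p1-thm-06} is already a consequence of normality, yet that proof never actually derives $c_1\in\mathbb{R}$ from it (the phrases ``for a real constant $c_1$'' in \eqref{Dipon-vasu-p1-eqn-035}--\eqref{Dipon-vasu-p1-eqn-031} assert rather than prove reality). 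For self-adjointness one can recover $c_1\in\mathbb{R}$ from the \emph{vector} equality $D_\Phi e_n = D_\Phi^* e_n$, which forces $n^{-c_1}=n^{-\overline{c_1}}$; no analogous step is available for normality.
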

\begin{rem}
Since every self-adjoint operator is normal,
the proof for normality of the operator $D_\Phi$  follows the same argument as that of the above theorem. Our method is inspired by the work of H. J. Schwartz \cite{H.J.Schwartz-Thesis-1969} on the normality of the composition operator $C_\Phi$.
\end{rem}
Now we provide a simple proof of Hilbert-Schmidt criterion for the operator $D_\Phi$ on $\mathcal{H}^2$.
\begin{thm}\label{Dipon-vasu-p1-thm-08}
	If there exists $\epsilon >1/2$ such that $\Phi(\mathbb{C}_0)\subset\mathbb{C}_\epsilon$, then the operator $D_\Phi$ is Hilbert-Schmidt on $\mathcal{H}^2.$
\end{thm}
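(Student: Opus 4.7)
The plan is to verify the Hilbert-Schmidt condition directly on the canonical orthonormal basis $\{e_n\}_{n\geq 1}$ of $\mathcal{H}^2$ given by $e_n(s) = n^{-s}$. Recall that $D_\Phi$ is Hilbert-Schmidt on $\mathcal{H}^2$ if and only if
\[
\|D_\Phi\|_{HS}^2 = \sum_{n=1}^{\infty} \|D_\Phi e_n\|_{\mathcal{H}^2}^2 < \infty.
\]
A direct computation gives $D_\Phi e_n(s) = e_n'(\Phi(s)) = -(\log n)\, n^{-\Phi(s)}$, so the problem reduces to estimating $\|n^{-\Phi}\|_{\mathcal{H}^2}^2$ for each $n \geq 1$.

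The key step is to exploit the hypothesis $\Phi(\mathbb{C}_0) \subset \mathbb{C}_\epsilon$ with $\epsilon > 1/2$. This forces $\Re \Phi(s) \geq \epsilon$ on $\mathbb{C}_0$, and therefore
\[
|n^{-\Phi(s)}| = n^{-\Re \Phi(s)} \leq n^{-\epsilon}
\]
for every $s \in \mathbb{C}_0$ and every $n \geq 1$. Since the G\"ordon--Hedenmalm characterization (and the expansion of $n^{-\Phi(s)} = \exp(-\Phi(s)\log n)$ via the Dirichlet series structure of $\Phi$) ensures that $n^{-\Phi}$ is representable as a convergent Dirichlet series, the above pointwise bound gives $n^{-\Phi} \in \mathcal{H}^\infty$ with $\|n^{-\Phi}\|_{\mathcal{H}^\infty} \leq n^{-\epsilon}$. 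Using the standard embedding $\|f\|_{\mathcal{H}^2} \leq \|f\|_{\mathcal{H}^\infty}$ (a consequence of the $\mathcal{H}^2$ norm being the $L^2$ limit of vertical means while $\|\cdot\|_{\mathcal{H}^\infty}$ dominates such means pointwise), it follows that $\|n^{-\Phi}\|_{\mathcal{H}^2} \leq n^{-\epsilon}$.

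Combining these bounds yields
\[
\|D_\Phi\|_{HS}^2 \leq \sum_{n=1}^{\infty} \frac{(\log n)^2}{n^{2\epsilon}}.
\]
Since $2\epsilon > 1$, Lemma \ref{Dipon-vasu-p1-lem-002} applied with $k = 2$ and $s = 2\epsilon$ provides the convergent majorant
\[
\sum_{n=1}^{\infty} \frac{(\log n)^2}{n^{2\epsilon}} \leq \frac{2}{(2\epsilon - 1)^2}\, \zeta(2\epsilon) < \infty,
\]
which establishes that $D_\Phi$ is Hilbert-Schmidt on $\mathcal{H}^2$. The argument is essentially a direct verification, so there is no serious obstacle; the only delicate point is the norm comparison $\|\cdot\|_{\mathcal{H}^2} \leq \|\cdot\|_{\mathcal{H}^\infty}$ applied to $n^{-\Phi}$, which is routine once one confirms via the G\"ordon--Hedenmalm structure that $n^{-\Phi}$ is indeed a bounded Dirichlet series on $\mathbb{C}_0$.
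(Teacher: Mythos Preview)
Your proof is correct and follows essentially the same approach as the paper: both compute $D_\Phi e_n = -(\log n)\,n^{-\Phi}$ on the orthonormal basis, bound $\|n^{-\Phi}\|_{\mathcal{H}^2}^2 \le n^{-2\epsilon}$ via the sup-norm estimate coming from $\Re\Phi \ge \epsilon$, and conclude by the convergence of $\sum (\log n)^2 n^{-2\epsilon}$ for $\epsilon>1/2$. You simply add more justification for the $\mathcal{H}^2$--$\mathcal{H}^\infty$ comparison and invoke Lemma~\ref{Dipon-vasu-p1-lem-002} explicitly, whereas the paper states the key inequality and the convergence without further comment.
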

\begin{proof}
	To prove the operator $D_\Phi$ is Hilbert-Schmidt, it suffices to show that 
	\begin{equation*}
		\sum_{n=1}^{\infty}\norm{D_\Phi(n^{-s})}^2_{\mathcal{H}^2} <\infty.
	\end{equation*} 
	Since $\Phi(\mathbb{C}_0)\subset\mathbb{C}_\epsilon$, we have
	\begin{equation*}
		\norm{D_\Phi(n^{-s})}^2_{\mathcal{H}^2} = \norm{n^{-\Phi(s)}(\log n)}^2_{\mathcal{H}^2} \le \frac{(\log n)^2}{n^{2\epsilon}},
	\end{equation*}
	and $\sum_{n=1}^{\infty}\frac{(\log n)^2}{n^{2\epsilon}} <\infty$, if $\epsilon >1/2$. This completes the proof.
\end{proof}

\section{Spectrum}
The spectrum of an operator $T$ on a Hilbert space $H$ is denoted as $spec(T)$, which is the set of all $\lambda$ in $\mathbb{C}$ such that $T-\lambda I_H$ is not invertable, that is
\begin{equation*}
	spec(T):= \{\lambda\in\mathbb{C}: T-\lambda I_H \,\,\,\mbox{is not invertable}\}.
\end{equation*}
A thorough explanation of the spectrum for the classical case $\mathbb{H}^2(\mathbb{D})$ has been previously described by Caughran and Schwartz \cite{Caughran-PAMS-1975}. In our context, we shall deduce the spectrum of $D_\Phi$ with a specified symbol for the space $\mathcal{H}^2$.
\begin{thm}\label{Dipon-vasu-p1-thm-03}
	Let $\Phi(s)= c_1+c_2 2^{-s}$ be with $c_2\ne0$ and $\Re(c_1)>|c_2|+ 1/2$, where $s\in\mathbb{C}_\frac{1}{2}$ and $D_\Phi$ is a composition-differentiation operator on $\mathcal{H}^2$. Then the spectrum of $D_\Phi$ is $\{0\}$.
\end{thm}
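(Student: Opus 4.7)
The plan is to establish compactness of $D_\Phi$---so that the non-zero spectrum consists of isolated eigenvalues of finite multiplicity---and then to rule out non-zero eigenvalues. For compactness, for $s \in \mathbb{C}_0$ we have $|2^{-s}| < 1$, so
\begin{equation*}
\Re(\Phi(s)) \ge \Re(c_1) - |c_2| \cdot 2^{-\Re(s)} > \Re(c_1) - |c_2| > \tfrac{1}{2}
\end{equation*}
by hypothesis; hence $\Phi(\mathbb{C}_0) \subset \mathbb{C}_\epsilon$ for $\epsilon = \Re(c_1) - |c_2| > 1/2$, and Theorem~\ref{Dipon-vasu-p1-thm-08} yields that $D_\Phi$ is Hilbert--Schmidt, in particular compact. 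Since $\mathcal{H}^2$ is infinite-dimensional, $0 \in spec(D_\Phi)$ automatically, so the task reduces to showing that $D_\Phi$ has no non-zero eigenvalues.

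To exploit the special form of $\Phi$, suppose $D_\Phi f = \lambda f$ with $\lambda \neq 0$ and $f = \sum_n a_n n^{-s}$. The computation in the proof of Theorem~\ref{Dipon-vasu-p1-thm-05} gives
\begin{equation*}
D_\Phi f(s) = \sum_{k\ge 0} b_k \, 2^{-ks}, \qquad b_k = -\frac{(-c_2)^k}{k!}\sum_{n\ge 1} a_n (\log n)^{k+1} n^{-c_1},
\end{equation*}
so $D_\Phi f$---and hence $f$---is supported on $\{2^k : k \ge 0\}$. Writing $\alpha_k = a_{2^k}$ and $g(z) = \sum_k \alpha_k z^k$, one obtains $g \in \mathbb{H}^2(\mathbb{D})$ with $f(s) = g(2^{-s})$. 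A direct chain-rule computation then transforms the eigenvalue equation into
\begin{equation*}
-(\log 2)\, \psi(z)\, g'(\psi(z)) = \lambda\, g(z), \qquad z \in \mathbb{D},
\end{equation*}
where $\psi(z) = 2^{-c_1} e^{-c_2 (\log 2) z}$ is entire.

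For $|z| \le 1$ we have $|\psi(z)| \le 2^{-\Re(c_1) + |c_2|} < 2^{-1/2}$, so $\psi(\overline{\mathbb{D}})$ lies in a compact subset of $\mathbb{D}$. By the Denjoy--Wolff theorem, $\psi$ has a unique fixed point $z_0 \in \mathbb{D}$ with $|\psi'(z_0)| < 1$; moreover $z_0 = \psi(z_0) \neq 0$ (since $\psi$ never vanishes), and both $u(z_0) := -(\log 2) z_0$ and $\psi'(z_0) = -c_2 (\log 2) z_0$ are non-zero. Setting $\beta_n = g^{(n)}(z_0)$ and differentiating the transformed equation $n$ times at $z_0$ via the Leibniz rule and Fa\`a di Bruno's formula, the top-order contribution to the left-hand side comes from the all-singletons partition and equals $u(z_0) (\psi'(z_0))^n \beta_{n+1}$; the remaining terms are linear in $\beta_1, \ldots, \beta_n$ with coefficients depending only on derivatives of $u$ and $\psi$ at $z_0$. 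This yields the recursion
\begin{equation*}
\beta_{n+1} = \frac{\lambda \beta_n - \sum_{k=1}^{n} C_{n,k}\beta_k}{u(z_0)\, (\psi'(z_0))^n}.
\end{equation*}
A short induction shows that $\beta_0 = 0$ forces all $\beta_n = 0$, so $g \equiv 0$; hence one may assume $\beta_0 \neq 0$. The leading behaviour of the recursion then forces $|\beta_n|$ to grow essentially like $|\lambda/u(z_0)|^n / |\psi'(z_0)|^{n(n-1)/2}$, a superfactorial rate, yielding $\limsup_n (|\beta_n|/n!)^{1/n} = \infty$ and hence radius of convergence $0$ for the Taylor series of $g$ at $z_0$. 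This contradicts the holomorphy of $g$ on $\mathbb{D}$, where the radius at $z_0$ must be at least $1 - |z_0| > 0$. Therefore $g \equiv 0$, $f \equiv 0$, and $spec(D_\Phi) = \{0\}$.

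The main obstacle is making the superfactorial growth of $|\beta_n|$ rigorous in the presence of the linear correction $\sum_k C_{n,k}\beta_k$. The natural remedy is to introduce the rescaled quantity $\widetilde\beta_n = \beta_n\, |\psi'(z_0)|^{n(n-1)/2}\, (u(z_0)/\lambda)^n$, so that the leading part of the recursion reads $\widetilde\beta_{n+1} = \widetilde\beta_n + E_n$, with $E_n$ carrying a super-exponentially small factor coming from the rescaling. Using the explicit identities $\psi^{(m)}(z_0) = (-c_2 \log 2)^m z_0$ and $u^{(m)}(z_0) = (-c_2 \log 2)^m u(z_0)$ to bound the coefficients $C_{n,k}$ at most polynomially in $n$, a bootstrap argument shows that $\widetilde\beta_n$ stays bounded, $\sum_n |E_n| < \infty$, and $\widetilde\beta_n \to \widetilde\beta_\infty \neq 0$, which secures the required growth and closes the argument.
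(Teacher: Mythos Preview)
Your argument differs substantially from the paper's. The paper proceeds by writing out $f'(\Phi(s))=\mu f(s)$ as an identity of Dirichlet series and ``equating coefficients'' term by term to force all $a_n=0$; it never invokes compactness and hence tacitly assumes that every spectral value is an eigenvalue. Your Hilbert--Schmidt step via Theorem~\ref{Dipon-vasu-p1-thm-08} supplies exactly that missing justification, and your passage to $g\in\mathbb{H}^2(\mathbb{D})$ together with the functional equation $-(\log 2)\psi(z)g'(\psi(z))=\lambda g(z)$ is a correct and more structural reformulation than the paper's direct coefficient matching.

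The genuine gap is in your final growth step. The claim that the Fa\`a di Bruno/Leibniz coefficients $C_{n,k}$ grow ``at most polynomially in $n$'' is false: with $a=-c_2\log 2$, your identities $\psi^{(m)}(z_0)=a^m z_0$ and $u^{(m)}(z_0)=a^m u(z_0)$ give, after collecting terms, that the coefficient of $\beta_k$ in $\bigl(u\cdot(g'\circ\psi)\bigr)^{(n)}(z_0)$ equals $u(z_0)\,a^n z_0^{\,k-1}\,S(n{+}1,k)$, where $S(\cdot,\cdot)$ is a Stirling number of the second kind. These are far from polynomial in $n$, so the bootstrap as written does not close. The strategy is salvageable, but the mechanism is different from the one you state: after your rescaling the error term attached to $\widetilde\beta_k$ carries an extra factor $|q|^{(n-k)(n+k-1)/2}$ with $q=\psi'(z_0)=az_0$, and one must actually verify that $S(n{+}1,k)\,|q|^{(n-k)(n+k-1)/2}$ is summable over $k\le n$ and over $n$ (the Gaussian-type exponent does dominate the crude bound $S(n{+}1,k)\le k^{n+1}/k!$, but this needs to be argued). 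A cleaner route is to first apply the Koenigs linearization at $z_0$ (available since $0<|q|<1$): in the linearized coordinate the recursion takes the form $(n{+}1)V_0\,q^n h_{n+1}=\lambda h_n+O(|q|^{\,n-1})$ with $V$ holomorphic near $0$, from which the super-factorial growth of $|h_n|$ and hence zero radius of convergence follow without any combinatorics. Either way, the polynomial-growth assertion must be replaced by a real estimate before the argument is complete.
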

 
\begin{pf}
	Let $\lambda$ be an element of the spectrum of $D_\Phi$, and 
	\begin{equation}\label{Dipon-vasu-p1-eqn-032}
		f(s) = \sum_{n=1}^{\infty}a_n n^{-s}
	\end{equation}
	 be in $\mathcal{H}^2$. Then there exists a nonzero function $f$ in $\mathcal{H}^2$, which is necessarily an eigenvector, such that 
	\begin{equation*}
		D_\Phi f(s)= \lambda f(s),\,\,\,\mbox{for every}\,\, s\in\mathbb{C}_\frac{1}{2}.
	\end{equation*}
This leads to the equation
	\begin{equation}\label{Dipon-vasu-p1-eqn-033}
		f'( c_1+c_2 2^{-s})= \lambda f(s).
	\end{equation}
	By letting $s\to\infty$ in \eqref{Dipon-vasu-p1-eqn-033} as $s\in\mathbb{C}_\frac{1}{2}$, and equating the coefficients on both the sides, we obtain
	\begin{equation*}
		\lambda a_1= 0.
	\end{equation*}
	Therefore, $\lambda= 0$ is an element of the spectrum of $D_\Phi$, for nonzero $a_1$.\vspace{2mm}\\ \vspace{0.5mm}
	\hspace{1mm}
	On the contrary, suppose there is a nonzero element $\mu$ in the spectrum of $D_\Phi$ on $\mathcal{H}^2$. Then for a nonzero function $f$ on $\mathcal{H}^2$, we have
	\begin{equation}\label{Dipon-vasu-p1-eqn-034}
		f'( c_1+c_2 2^{-s})= \mu f(s).
	\end{equation}
	Therefore, by substituting \eqref{Dipon-vasu-p1-eqn-032} in \eqref{Dipon-vasu-p1-eqn-034} we obtain
	\begin{equation*}
		-\sum_{n=1}^{\infty}a_nn^{-c_1}n^{-c_22^{-s}}\log n= \mu\sum_{n=1}^{\infty}a_nn^{-s}.
	\end{equation*} 
	More precisely, we have
	\begin{equation}\label{Dipon-vasu-p1-eqn-006}
		- a_22^{-c_1}2^{-c_22^{-s}}\log 2- a_33^{-c_1}3^{-c_22^{-s}}\log 3-\cdots = \mu a_1+\mu a_22^{-s}+ \mu a_33^{-s}+\cdots.
	\end{equation}
	By equating each coefficient separately, we first obtain
	\begin{equation*}
		\mu a_1 = 0,
	\end{equation*}
	that is
	\begin{equation}\label{Dipon-vasu-p1-eqn-019}
		\mu= 0 \,\,\mbox{or},\,\ a_1=0.
	\end{equation}
  Since we are considering a nonzero eigenvalue $\mu$, we choose $a_1=0$. Next, by equating the second term in \eqref{Dipon-vasu-p1-eqn-006}, we obtain
	\begin{equation*}
		a_2(2^{-c_1}2^{-c_22^{-s}}+ \mu 2^{-s})= 0.
	\end{equation*}
    Since the equation $2^{-c_1}2^{-c_22^{-s}}+ \mu 2^{-s}= 0$ is incompatible for any eigenvalue $\mu$, it follows that the coefficient $a_2$ must also be zero.
Continuing this process and equating each term in \eqref{Dipon-vasu-p1-eqn-006} separately, we can conclude that \(a_n = 0\) for all \(n > 2\). Therefore, \(f\) must be identically zero, which is a contradiction. Hence, the only element in the spectrum of \(D_\Phi\) on \(\mathcal{H}^2\) with the given special symbol is \(\{0\}\). This completes the proof.
\end{pf}
\begin{rem}
	Moreover, we can conclude that the operator $D_\Phi$ is \textit{quasinilpotent} for the symbol  $\Phi(s)= c_1+c_2 2^{-s}$. This means that the spectrum of the operator $D_\Phi$ is zero  for the symbol  $\Phi(s)= c_1+c_2 2^{-s}$. 
\end{rem}
\begin{rem}
	The operator $D_\Phi$ is not normal with respect to the symbol $\Phi(s)= c_1+c_22^{-s}$ on $\mathcal{H}^2$. This is evident since 
	\begin{equation*}
		r(D_\Phi)= 0\ne \norm{D_\Phi}_{\mathcal{H}^2},
	\end{equation*}
	where $r(D_\Phi) = \sup \{|\lambda|: \lambda\in spec(D_\Phi)\}$ be the spectral radius of $D_\Phi$.
\end{rem}


\noindent\textbf{Acknowledgment:} 
The authors would like to thank Dr. Athanasios Kouroupis for his valuable discussions and for providing the important symbol in equation \eqref{Dipon-vasu-p1-eqn-021}. The second named author is supported by the UGC-JRF Fellowship (NTA reference number $231610023582$), New Delhi, India.

 \vspace{1.5mm}
 
 \noindent\textbf{Compliance of Ethical Standards:}
 \vspace{1.5mm}\\
 \noindent\textbf{Conflict of interest.} The authors declare that there is no conflict  of interest regarding the publication of this paper.
 \vspace{1.5mm}
 
 \noindent\textbf{Data availability statement.}  Data sharing is not applicable to this article as no datasets were generated or analyzed during the current study.\vspace{1.5mm}
 
 \noindent\textbf{Authors contributions.} Both the authors have made equal contributions in reading, writing, and preparing the manuscript.

\end{document}